\documentclass[]{amsart}

%\usepackage[dvipdfmx]{graphicx}
% For arXiv 
\usepackage[pdftex]{graphicx}
\usepackage{float}
\usepackage{amsmath, amsthm, amssymb}
\usepackage{mathrsfs}

%\usepackage{curves}
%\usepackage{epic}
%\usepackage{subfigure}
%\usepackage[abs]{overpic} 
%\usepackage{pict2e} 
%\usepackage[dvipdfmx]{graphicx}
%\usepackage[dvipdfmx]{graphicx}
%\usepackage{float} % figure をその位置におく。\begin{figure}[H]を使うこと。

% plain : italic text, extra space above and below;
% plain では見出しは太字になり、本文は斜体になる。
\theoremstyle{plain}
\newtheorem{theorem}{Theorem}[section]
\newtheorem{proposition}[theorem]{Proposition}
\newtheorem{lemma}[theorem]{Lemma}
\newtheorem{corollary}[theorem]{Corollary}

% definition : upright text, extra space above and below;
% definiton では見出しが太字になり、本文で斜体を使わない。
\theoremstyle{definition}
\newtheorem*{definition}{Definition}
\newtheorem{remark}[theorem]{Remark}

% remark : upright text, no extra space above or below.
% remark では見出しが斜体になり、本文で斜体を使わない。
\theoremstyle{remark}
\newtheorem*{acknowledgement}{Acknowledgements}

%\noindent
%Kenichi Kawagoe,\;
%Graduate School of Natural Science and Technology,
%Kanazawa University, Kakuma, Kanazawa 920-1192, Japan
%(kawagoe@kenroku.kanazawa-u.ac.jp)

\title[The colored HOMFLY-PT polynomials of twist knots]
{The colored HOMFLY-PT polynomials of the trefoil knot, the figure-eight knot and twist knots}
\author{Kenichi Kawagoe}
\address{Institute of Liberal Arts and Science \newline
\indent Kanazawa University, Kakuma, Kanazawa, 920-1192, Japan}
\email{kawagoe@se.kanazawa-u.ac.jp}
\keywords{colored HOMFLY-PT polynomials,  twist knots.}
\subjclass[2010]{57M27,57M25.}
\date{\today}

\begin{document}

\maketitle

\begin{abstract}
We give a rigorous proof of the colored HOMFLY-PT polynomials 
of the trefoil knot, the figure-eight knot and twist knots.
For the trefoil knot and the figure-eight knot,
it is expressed by a single sum,
and for a twist knot, it is  expressed by a double sum.
\end{abstract}

\section{Introduction}
After the articles \cite{Kawagoe3,Kawagoe4}, 
the author feels that an explicit formula of the colored HOMFLY-PT polynomial 
of the figure-eight knot is desirable \cite{AV}.
Actually, in \cite{Kawagoe3}, we devote to calculate the colored 
HOMFLY-PT polynomial of the figure-eight knot at a root of unity,
which is expressed by a single sum. %, and discuss some limits of it.
In \cite{Kawagoe4}, we treat  twist knots,
%devote to calculate
%the colored HOMFLY-PT polynomial of some twist knots,
which is expressed by more than double sum depending on the number of full twists.
%and discuss some limits of them.
Other expressions %of the colored HOMFLY-PT polynomial
are also conjectured for % it is desirable to rigorously derive 
%the formula of the colored HOMFLY-PT polynomial of 
the figure-eight knot \cite{IMMM} and the twist knot \cite{NRZS},
which is expressed by as a single sum and a double sum, respectively.
%some reseachers asked 
%the author " Do you have another examples of the colored HOMFPY-PT polynomial?"
%or " Do you have another expression of the colored HOMFLY-PT polynomial of
%the figure-eight knot, which is expressed by a single sum?"
%At this time, by an application of \cite{Kawagoe4},
%the author obtained an example of the colored HOMFLY-PT polynomial 
%of the figure-eight knot  as a double sum.
%After a while, 
%And, it seems that the colored HOMFLY-PT polynomial of the figure-eight knot is 
%suggested as a single sum, but there is  no proof of it.
Therefore, in this article, we give a rigorous proof of the colored 
HOMFLY-PT polynomial of twist knots.
%As a consequence, we have the colored HOMFLY-PT polynomial of 
%the trefoil knot and the figure-eight knot as a single sum.
This article is a generalization
of the colored Jones polynomials \cite{H,Le,Masbaum}. 
%to the HOMFLY-PT polynomial.

%Then, in  a similar way as \cite{Masbaum},
The proof is similar as \cite{Masbaum}.
That is, for a positive integer $n$, we construct an element $\omega=\omega_{n}^{+}$ 
of the HOMFLY-PT skein module
satisfying Figure \ref{fig:full twist},
where the horizontal arc consists of the two $i$-th $q$-symmetrizers
with opposite orientations  ($i=0,\ldots,n$):
%which are oriented $i$ rightward and $i$ leftward ($i=0,\ldots,n$):
\begin{figure}[H]
\centering
\includegraphics[width=50mm,pagebox=cropbox,clip]{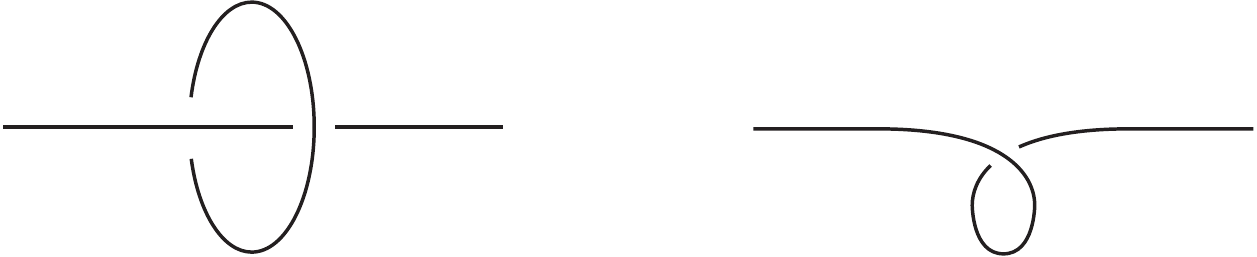}
\begin{picture}(0,0)(0,0)
\put(-78,12){$=$}
\put(-112,30){$\omega$}
\end{picture}
\caption{$\omega$ and a positive full twist}
\label{fig:full twist}
\end{figure}
%In this article, we define the colored HOMFLY polynomial by means of
%the linear skein theory associated with the HOMFLY polynomial.

This article is organized as follows.
In Section $2$,  we recall some notations and notions concerning the skein module
derived from the HOMFLY-PT skein relations.
In Section $3$,  we develop some eigenvalues of an element of 
the skein module. %derived from the HOMFLY-PT polynomial.
Using the eigenvalues, we define the element $\omega$.
In Section $4$, for twist knots, we discuss a formula of  $p$ full twists.
In Section $5$, we calculate the colored HOMFLY-PT polynomial of twist knots.
As a corollary, for the trefoil and the figure-eight knot,
we can express it  as a single sum. 

Although we only treat twist knots in this article, we hope that
the twisting formula of Section $4$ leads an application to
the colored HOMFLY-PT polynomial of knots which contain twisting arcs. 

\section{Preliminaries}
%In this section, we prepare notations and some lemmas to 
%calculate the HOMFLY polynomials of knots and links.
In this section, %following \cite{Kawagoe1,Kawagoe2}, 
we review  some notation
including $q$-integers and $q$-symmetrizers.
%$q$-factorials, $q$-binomials, $q$-symmetrizers and $q$-antisymmetrizers.
Let $a$ and $q$ be non-zero variables in $\mathbb{C}$.
For an integer $n$, we define the symbols by
%$q$-integers, $q$-binomial coefficients, 
%and other useful symbols are
%defined by
\begin{align*}
[n] &= \frac{q^n-q^{-n}}{q-q^{-1}}, \qquad 
\{ n \} =  q^{n} - q^{-n}, \qquad
\{ n ; a \} =  aq^{n}-a^{-1}q^{-n}.   
\end{align*}
For integers $n >0 , i \geq 0$, we define the products of  $i$ terms of these symbols by
\begin{align*}
[n]_{i} &= [n] [n-1] \cdots [n-i+1], \\
\{ n \}_{i} &=  \{ n \} \{ n-1 \} \cdots \{ n-i+1 \},  \\
\{ n ; a \}_{i} &=  \{ n; a \} \{ n-1; a \} \cdots \{ n-i+1; a \},  \\ 
\{ -n ; a \}_{i} &=  \{ -n; a \} \{ -n+1; a \} \cdots \{ -n+i-1; a \}, 
\end{align*} 
where they are defined to be $1$ if $i = 0$.
%For integers $n \geq i  \geq 0$, 
Furthermore, we define
\begin{align*}
%[n]!= [n][n-1]\cdots[1], \quad \{n \} ! = \{ n \}\{ n-1 \}\cdots\{1\}, \quad
[n]! =[n]_{n}, \quad \{ n \} ! =  \{ n \}_{n}, \quad 
\left[ \begin{array}{@{\,}c@{\,}}n \\ i \end{array} \right]
= \frac{[n]!}{[i]! [n-i]!}.
\end{align*}

%The product is described by descending order with respect to the exponent, and 
% %with respect to the exponent, and 
%t gives $1$ if the product is not defined. 
%% if not defined. %if the product is not defined. 

We recall the HOMFLY-PT skein module, $\mathcal{S}(M)$, 
of an oriented $3$-manifold $M$ \cite{HOMFLY, Li}.
$\mathcal{S}(M)$ is 
%the vector space of the formal linear sum, over  $\mathbb{C}$, of 
%regular isotopy classes of  link diagrams in $M$,  quotiented by the HOMFLY-PT relations:
the free $\mathbb{C}$-module generated by  isotopy classes of framed links in $M$
%quotiented by 
modulo the  submodule generated by the HOMFLY-PT skein relations:
\begin{enumerate}
\setlength{\itemsep}{3mm}
%\item regular isotopy,
\item  $L\cup (\textrm{a trivial knot with 0 framing}) = \displaystyle \frac{ \{n ; a \} }{ \{1\} } L$, 
 and $\varnothing = 1$, %[0,a] = \frac{a-a^{-1}}{q-q^{-1}},$ 
%where $\varnothing$ means an empty diagram, %single unknotted component.
\item \begin{minipage}{15pt}
        \begin{picture}(15,15) %\thicklines
            \qbezier(0,15)(0,15)(15,0)
            \qbezier(0,0)(0,0)(6,6)
            \qbezier(9,9)(9,9)(15,15)
            \put(15,0){\vector(1,-1){0}}
            \put(15,15){\vector(1,1){0}}
        \end{picture}
\end{minipage}
$\, - \,$
\begin{minipage}{15pt}
        \begin{picture}(15,15) %\thicklines
            \qbezier(0,0)(0,0)(15,15)
            \qbezier(0,15)(0,15)(6,9)
            \qbezier(9,6)(9,6)(15,0)
            \put(15,0){\vector(1,-1){0}}
            \put(15,15){\vector(1,1){0}}
        \end{picture}
\end{minipage}
$\, = (q-q^{-1}) \,$
\begin{minipage}{30pt}
        \begin{picture}(15,15) %\thicklines
            \qbezier(0,15)(7.5,7.5)(15,15)
            \qbezier(0,0)(7.5,7.5)(15,0)
            \put(15,0){\vector(3,-2){0}}
            \put(15,15){\vector(3,2){0}}
        \end{picture}
\end{minipage}\hspace*{-3mm}, % \quad \text{(the skein relation)},
%%%
\item \begin{minipage}{18pt}
        \begin{picture}(18,15) %\thicklines
            \qbezier(0,11.2)(10.5,9)(10.5,4.5)
            \qbezier(10.5,4.5)(10.5,0)(7.5,0)
            \qbezier(7.5,0)(4.5,0)(4.5,4.5)
            \qbezier(4.5,4.5)(4.5,5.2)(6,7.5)
            \qbezier(9,9.7)(10.5,11.2)(15,11.2)
            \put(18,12.2){\vector(4,1){0}}
        \end{picture}
\end{minipage}
$\, = a \,$
\begin{minipage}{18pt}
        \begin{picture}(15,15) %\thicklines
            \qbezier(0,7.5)(0,7.5)(15,7.5)
            \put(16.5,7.5){\vector(1,0){0}}
        \end{picture}
\end{minipage}
$\, , \quad$
%\hspace{5mm}
\begin{minipage}{18pt}
        \begin{picture}(18,15) %\thicklines
            \qbezier(0,11.2)(4.5,11.2)(6,9.7)
            \qbezier(9,7.5)(10.5,5.2)(10.5,4.5)
            \qbezier(10.5,4.5)(10.5,0)(7.5,0)
            \qbezier(4.5,4.5)(4.5,0)(7.5,0)
            \qbezier(4.5,4.5)(4.5,9)(15,11.2)
            \put(18,12.2){\vector(4,1){0}}
        \end{picture}
\end{minipage}
$\; = a^{-1} \,$
\begin{minipage}{18pt}
        \begin{picture}(15,15) %\thicklines
            \qbezier(0,7.5)(0,7.5)(15,7.5)
            \put(16.5,7.5){\vector(1,0){0}}
        \end{picture}
\end{minipage}$\, .$
\end{enumerate}
%We normalize the empty link to be $1$. 
We call a crossing \textit{positive} or \textit{negative} 
if it is the same crossing as the  first or second terms in (ii), respectively.

Following \cite{Kawagoe1,Kawagoe2,Kawagoe4}, we review the definitions of the $q$-symmetrizer, 
the $q$-antisymmetrizer, and their properties.
For an integer $n \geq 1$, we recursively define the $n$-th $q$-symmetrizer and
the $n$-th $q$-antisymmetrizer by Figure \ref{fig:qsym},
where the $q$-symmetrizer  is denoted by a white rectangle
and the $q$-antisymmetrizer is denoted by a  black rectangle.
\begin{figure}[H]
\begin{picture}(0,0)(0,0)
\put(23,99){\scriptsize$1$}  \put(40,93){$=$}
\put(23,71){\scriptsize$n$}  \put(40,66){$\displaystyle = \; \frac{ q^{-n+1}  }{ [n]  }$}  
\put(105,75){\scriptsize$n-1$}  
\put(125,66){$\displaystyle+ \frac{ [n-1]  }{ [n]}$}
\put(184,80){\scriptsize$n-2$}  \put(210,75){\scriptsize$n-1$}  
\put(23,43){\scriptsize$1$} \put(40,39){$=$}
\put(23,15){\scriptsize$n$}  \put(40,10){$\displaystyle = \; \frac{ q^{n-1}  }{ [n]  }$}  
\put(105,18){\scriptsize$n-1$} 
\put(125,10){$\displaystyle- \frac{ [n-1]  }{ [n]}$}
\put(184,24){\scriptsize$n-2$}  \put(210,19){\scriptsize$n-1$}
\end{picture}
\centering
\includegraphics[width=75mm,pagebox=cropbox,clip]{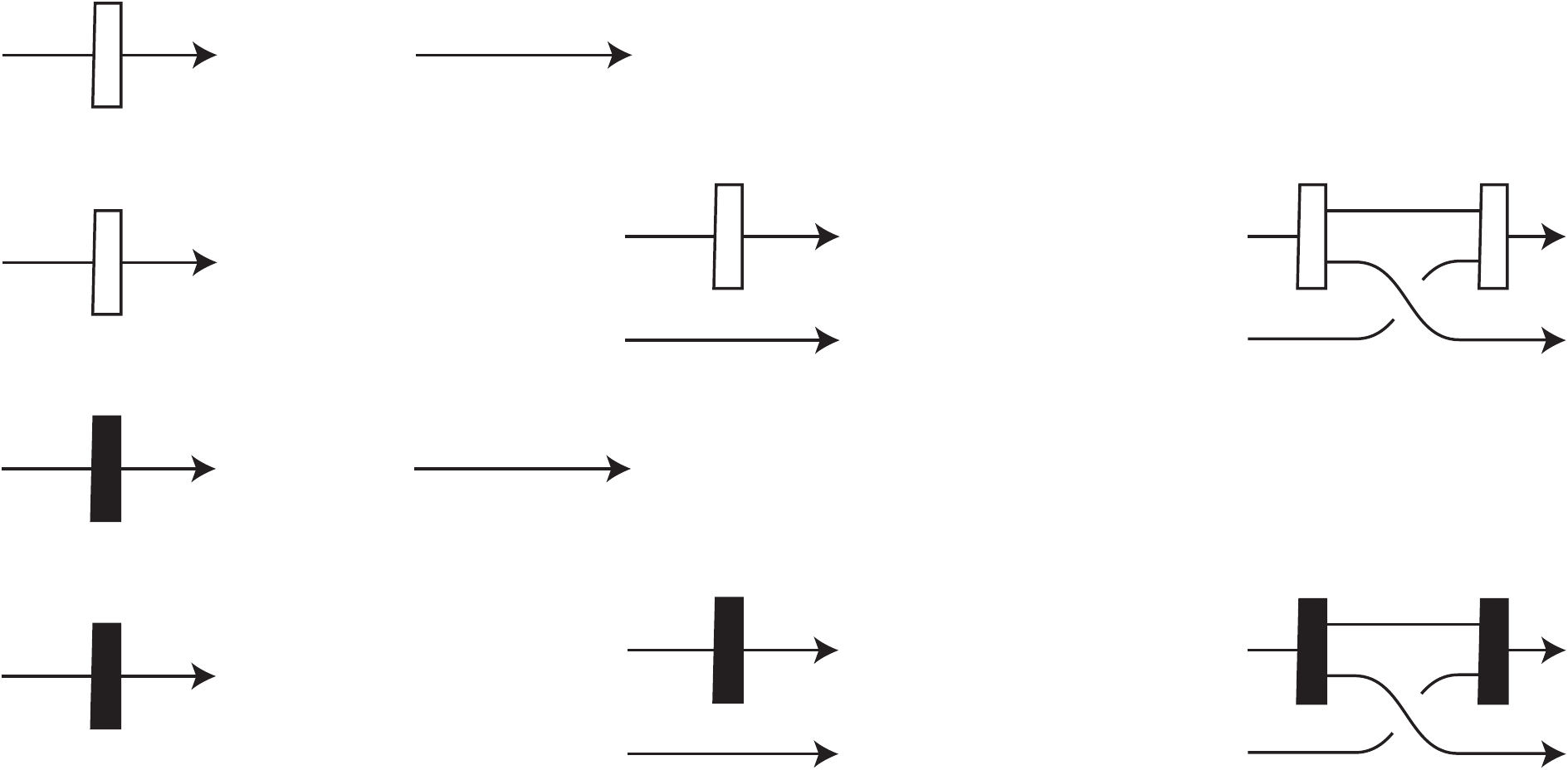}
\caption{The definitions of symmetrizers}
\label{fig:qsym}
\end{figure}
\noindent
The integer $n$ beside an arc means $n$ copies of the arc.
When we can easily find the number of copies, we omit it.

The $q$-symmetrizer and the $q$-antisymmetrizer  have the following properties,
where the positive crossing in Figure \ref{fig:qsymp} means only one crossing:
\begin{figure}[H]
\begin{picture}(0,0)(0,0)
\put(8,62){\scriptsize$n$} 
\put(38,57){$=$}  
\put(72,62){\scriptsize$n$}  
\put(87,57){$= q$}
\put(126,62){\scriptsize$n$} 
%\put(136,51){,} 
%
\put(196,62){\scriptsize$n-1$}  
\put(215,57){$\displaystyle = \; \frac{ \{ n-1 ;a \}  }{ \{ n \}  }$}  
\put(295,62){\scriptsize$n-1$} 
\put(8,19){\scriptsize$n$}
%\put(22,25){\scriptsize$i-2$}   
%\put(22,0){\scriptsize$i$}
\put(38,14){$=$} 
\put(72,19){\scriptsize$n$}
\put(87,14){$=-q^{-1}$}
\put(140,19){\scriptsize$n$} 
%\put(150,8){,}
%
\put(196,19){\scriptsize$n-1$} 
\put(215,14){$\displaystyle = \; \frac{ \{ -n+1 ;a \}  }{ \{ n \}  }$}  
\put(304,19){\scriptsize$n-1$} 
\end{picture}
\centering
\includegraphics[width=110mm,pagebox=cropbox,clip]{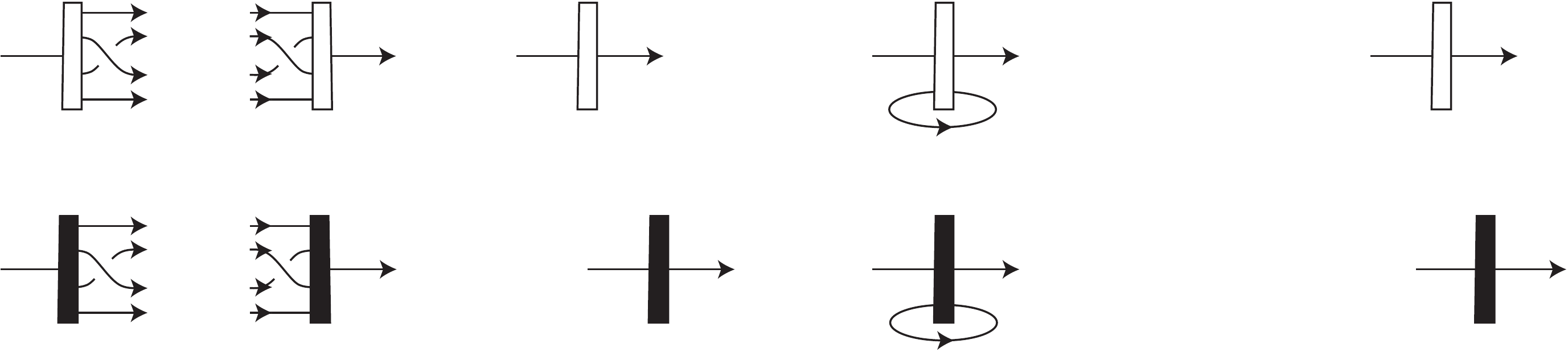}
\caption{Properties of symmetrizers}
\label{fig:qsymp}
\end{figure}
%where the number of each crossing in the left hand side is one.

Using the $m$-th and $n$-th $q$-symmetrizers, 
we define the $(m,n)$-th  $q$-symmetrizer by
\begin{figure}[H]
\centering
\includegraphics[width=55mm,pagebox=cropbox,clip]{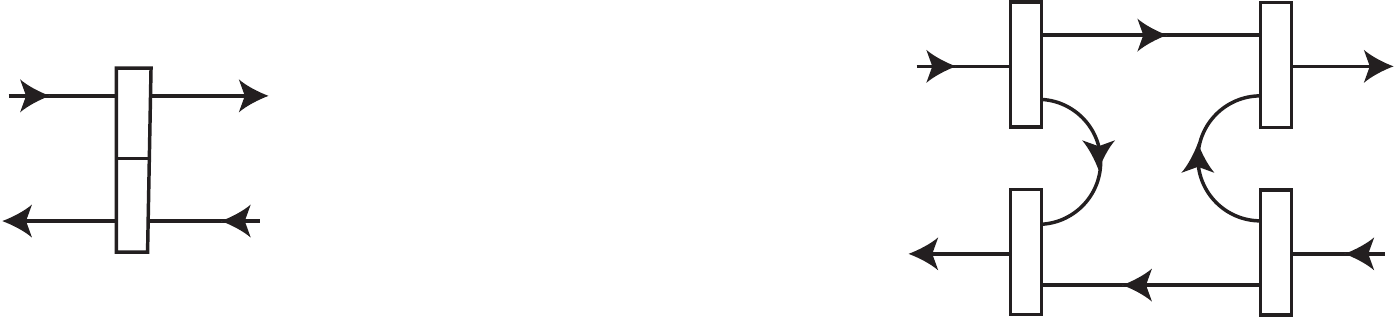}
\begin{picture}(0,0)(0,0)
%\put(-200,15){$D_{m,n} =$}
\put(-125,15){$\displaystyle =  \sum_{i=0}^{\min\{m,n\} }  x_{m,n}^i$}
\put(-158,28){\text{\scriptsize $m$}} \put(-140,28){\scriptsize $m$}
\put(-158,4){\scriptsize$n$}  \put(-140,4){\scriptsize$n$}
\put(-55,31){\scriptsize$m$} \put(-12,31){\scriptsize$m$}
\put(-55,2){\scriptsize$n$}  \put(-12,2){\scriptsize$n$}
\put(-40,35){\scriptsize$m-i$} \put(-40,-2){\scriptsize$n-i$}
\put(-36,23){\scriptsize$i$}  \put(-26,8){\scriptsize$i$}
\end{picture}
\caption{The definition of the $(m,n)$-th  $q$-symmetrizer}
\label{fig:Dmn0}
\end{figure}
\noindent
where $x_{m,n}^i$ is given by
\begin{align*}
x_{m,n}^i = (-1)^i \left[ \begin{array}{@{\,}c@{\,}}m \\ i \end{array} \right]
\left[ \begin{array}{@{\,}c@{\,}}n \\ i \end{array} \right]
\frac{ \{ i \}!  }{  \{ m+n -2;a \}_i  }.
\end{align*}

These three symmetrizers %$q$-symmetrizer, the $q$-antisymmetrizer, the $(m,n)$ th  $q$-symmetrizer  
have idempotent properties and vanishing properties as in Figure \ref{fig:vanishing property}.
 %\cite{Kawagoe1, Kawagoe2}:
\begin{figure}[H]
\begin{picture}(0,0)(0,0)
%\put(8,102){\scriptsize$n$} 
\put(38,97){$=$}  \put(89,97){$=$}  % \put(134,97){$,$}
%
%\put(164,102){\scriptsize$n$} 
\put(194,97){$=$}  \put(245,97){$=$}  
\put(141,54){$=$} 
\put(85,11){$=$}  \put(129,11){$= 0$}
\put(199,11){$=$} \put(241,11){$= 0$}
\end{picture}
\centering
\includegraphics[width=100mm,pagebox=cropbox,clip]{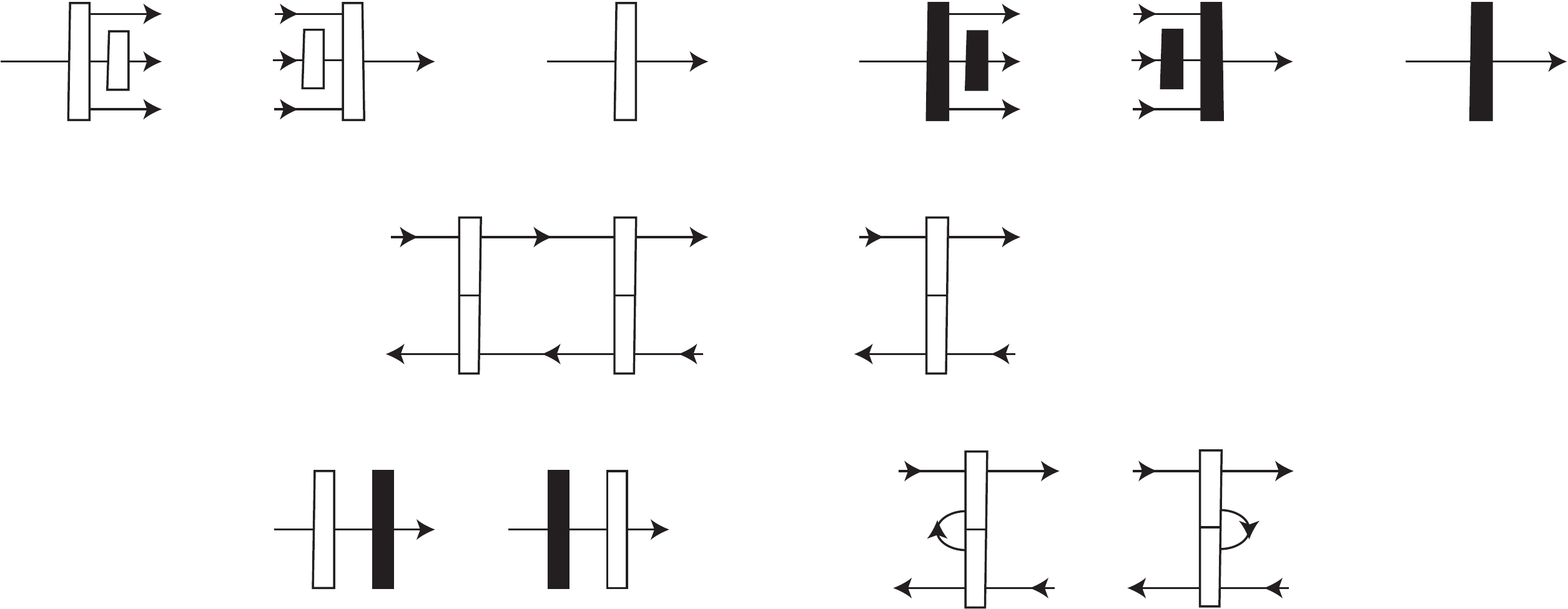}
\caption{Idempotent and vanishing properties}
\label{fig:vanishing property}
\end{figure}

The next three lemmas are useful for calculations 
of the HOMFLY-PT skein module in Section \ref{Eigenvalues of $D_{n,n}$}.
\begin{lemma}[\cite{Kawagoe4}]\label{lemma:alpha}
The following hold.
\begin{figure}[H]
\centering
\includegraphics[width=75mm,pagebox=cropbox,clip]{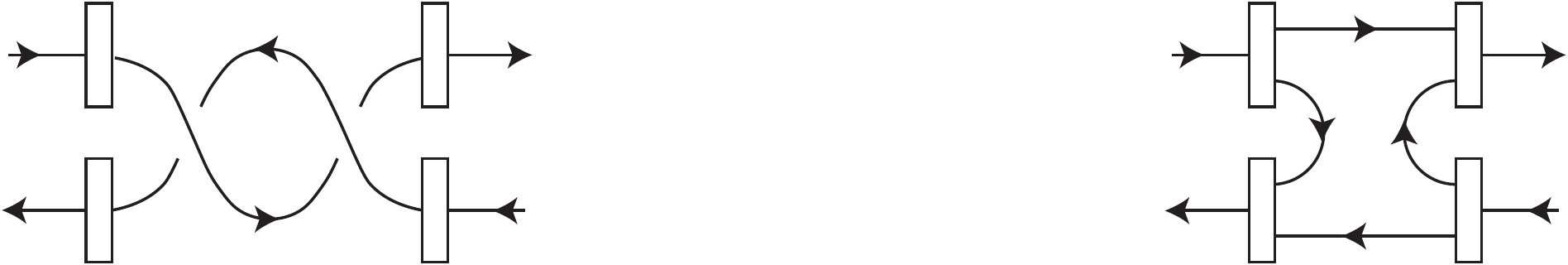}
\begin{picture}(0,0)(0,0)
\put(-140,14){$\displaystyle =  \sum_{i=0}^{\min\{m,n\} }  \alpha_{m,n}^i$}
\put(-215,31){\scriptsize $m$} \put(-153,31){\scriptsize $m$}
\put(-215,1){\scriptsize $n$}  \put(-153,1){\scriptsize $n$}
\put(-55,32){\scriptsize $m$} \put(-13,32){\scriptsize $m$}
\put(-55,2){\scriptsize $n$}  \put(-13,2){\scriptsize  $n$}
\put(-39,35){\scriptsize $m-i$} \put(-39,-3){\scriptsize $n-i$}
\put(-35,21){\scriptsize $i$}  \put(-27,10){\scriptsize $i$}
\end{picture}
\caption{}
\label{fig:alpha}
\end{figure}
\noindent
where $ \alpha_{m,n}^i =  \alpha_{m,n}^i(a,q)$ is given by 
\begin{align*}
 \alpha_{m,n}^i = 
(-a)^{-i} q^{-i (m+n) +\frac{i(i+3)}{2}}
\frac{  \{ m \}_i \{ n \}_i    }{   \{ i \} !        }.
\end{align*}
%\qed
\end{lemma}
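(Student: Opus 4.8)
The plan is to expand the left-hand side of Figure~\ref{fig:alpha} in the same family of ``turnback'' diagrams that appears on its right-hand side (and on the right-hand side of Figure~\ref{fig:Dmn0}), namely the pictures carrying the labels $m-i$, $n-i$ and $i$, and to identify the coefficient of each with the asserted $\alpha_{m,n}^i$. Since those turnback diagrams are linearly independent in the skein module, it suffices to pin down each coefficient separately, and I would do this by induction, peeling one strand at a time off the symmetrizers sitting on the left-hand side.

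First I would dispose of the degenerate cases: when $\min\{m,n\}=0$ one bundle is empty, both sides collapse to the single $i=0$ turnback diagram, and its coefficient $\alpha_{m,n}^0=1$ matches the structure of the expansion exactly as the leading coefficient $x_{m,n}^0=1$ does in Figure~\ref{fig:Dmn0}. For the inductive step I would apply the defining recursion of the $q$-symmetrizer (Figure~\ref{fig:qsym}) to, say, the $n$-th symmetrizer appearing on the left-hand side, splitting it into its $(n-1)$-strand part plus the correction term weighted by $[n-1]/[n]$, and thereby exposing a single crossing that can be treated by the skein relation (ii) together with the absorption relation of Figure~\ref{fig:qsymp} (a positive crossing against a $q$-symmetrizer contributes a factor $q$).

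Resolving that crossing produces two contributions. One is the term in which the crossing is simply removed, reducing the problem from $(m,n)$ to $(m,n-1)$ and hence, by the inductive hypothesis, governed by $\alpha_{m,n-1}^i$; the other is the smoothed term carrying the factor $(q-q^{-1})$, which fuses one extra strand into the transferred bundle and so links it to $\alpha_{m,n-1}^{i-1}$, while producing a curl whose removal by the kink relations (iii) contributes a power of $a$. Normalizing the local turnbacks with the cap--cup relations of Figure~\ref{fig:qsymp}, which supply precisely the factors $\{n-1;a\}/\{n\}$ and $\{-n+1;a\}/\{n\}$, I would assemble a Pascal-type recursion relating $\alpha_{m,n}^i$ to $\alpha_{m,n-1}^i$ and $\alpha_{m,n-1}^{i-1}$; the vanishing properties of Figure~\ref{fig:vanishing property} kill any attempt to transfer more than $\min\{m,n\}$ strands, so the sum truncates correctly. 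Checking that the closed form $\alpha_{m,n}^i=(-a)^{-i}q^{-i(m+n)+i(i+3)/2}\,\{m\}_i\{n\}_i/\{i\}!$ solves this recursion finishes the induction.

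The main obstacle will be the exact bookkeeping of the prefactors rather than the topology: I expect the delicate point to be verifying that the accumulated framing kinks assemble into the single factor $(-a)^{-i}$ and that the crossings contribute precisely the quadratic-in-$i$ exponent $-i(m+n)+\tfrac{i(i+3)}{2}$ of $q$. Matching the $[n-1]/[n]$ weights from Figure~\ref{fig:qsym} against the $\{n\}_i$ products, and confirming that the $\{i\}!$ denominator emerges from the repeated partial closures, is where the computation is most error-prone, so I would test the recursion against the smallest cases $m=n=1$, $i=1$ before trusting the general step.
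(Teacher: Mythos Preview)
The paper does not supply its own proof of this lemma: it is stated with a citation to \cite{Kawagoe4} and no argument appears here, so there is nothing in the present paper to compare your proposal against directly.

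That said, your inductive strategy---peeling one strand from the $n$-th symmetrizer via the recursion of Figure~\ref{fig:qsym}, resolving the exposed crossing with the HOMFLY-PT skein relation, and deducing a Pascal-type recursion linking $\alpha_{m,n}^i$ to $\alpha_{m,n-1}^i$ and $\alpha_{m,n-1}^{i-1}$---is the standard route to identities of this type and should succeed. One small point of caution: the Remark following Lemma~\ref{lemma:antialpha} makes clear that the crossings on the left of Figure~\ref{fig:alpha} are \emph{negative}, so when you invoke the absorption identity of Figure~\ref{fig:qsymp} you will be using the mirror version (a negative crossing against a $q$-symmetrizer contributing $q^{-1}$ rather than $q$), and the kink you remove will contribute $a^{-1}$ rather than $a$. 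This is exactly the kind of sign-and-exponent bookkeeping you already flag as the delicate part; with that adjustment your outline is sound and matches the spirit of the argument in the cited reference.
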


In the same way as  Lemma \ref{lemma:alpha}, 
by applying the vanishing property of $H_m$ and $E_n$, we have the next lemma.
\begin{lemma}\label{lemma:antialpha}
The following hold.
\begin{figure}[H]
\centering
\includegraphics[width=100mm,pagebox=cropbox,clip]{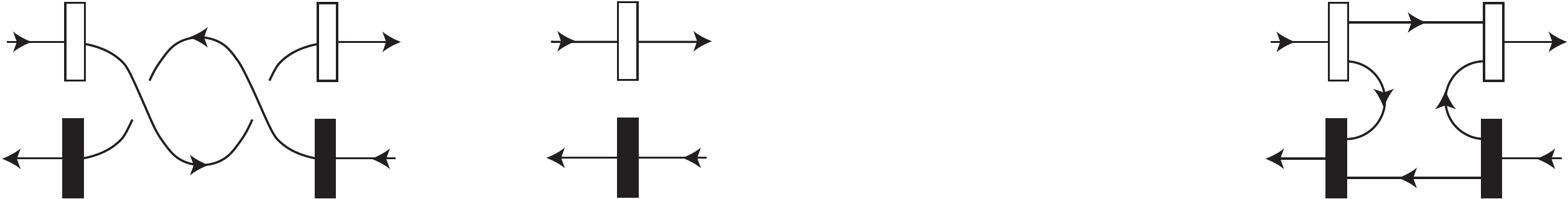}
\begin{picture}(0,0)(0,0)
\put(-286,30){\scriptsize $m$} \put(-225,30){\scriptsize $m$}
\put(-286,0){\scriptsize $n$}  \put(-225,0){\scriptsize  $n$}
\put(-205, 15){$=$}
\put(-185,30){\scriptsize $m$}  \put(-168,0){\scriptsize $n$}
\put(-145,15){$ \displaystyle - a^{-1}q^{m-n}\frac{  \{ m \} \{ n\}   }{  \{1\}  }$}
\put(-55,30){\scriptsize $m$} \put(-13,30){\scriptsize $m$}
\put(-55,1){\scriptsize $n$}  \put(-13,1){\scriptsize $n$}
\put(-40,33){\scriptsize $m-1$} \put(-40,-3){\scriptsize $n-1$}
%\put(-43,25){$1$}  \put(-33,10){$1$}
\end{picture}
\caption{}
\label{fig:antialpha}
\end{figure}
\end{lemma}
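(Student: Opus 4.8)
The plan is to carry out the same crossing-resolution as in the proof of Lemma \ref{lemma:alpha}, but with the lower $q$-symmetrizer replaced by the $q$-antisymmetrizer $E_n$, and then to invoke the vanishing property of $E_n$ to collapse the resulting sum to only two terms. First I would observe that the left-hand side of Figure \ref{fig:antialpha} is literally the diagram of Figure \ref{fig:alpha} with $E_n$ in place of the $n$-th $q$-symmetrizer. Running the same expansion procedure as in Lemma \ref{lemma:alpha} therefore writes this diagram as a sum $\sum_{i=0}^{\min\{m,n\}} c_i\, D_i$, where $D_i$ is the basis tangle carrying $m-i$ and $n-i$ straight strands together with a width-$i$ bridge between the two bundles, as in Figure \ref{fig:antialpha}, and the $c_i$ are coefficients still to be determined.

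The key step is the truncation of this sum to $i \in \{0,1\}$. For $i \ge 2$ the width-$i$ bridge creates a turnback joining at least two strands that are symmetrized by $H_m$ to strands antisymmetrized by $E_n$; by the vanishing property of Figure \ref{fig:vanishing property} any such configuration is zero, so $c_i D_i = 0$ for all $i \ge 2$. Hence only $D_0$ and $D_1$ survive, which already matches the two-term shape of the right-hand side of Figure \ref{fig:antialpha}.

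It then remains to evaluate $c_0$ and $c_1$. The tangle $D_0$ is the straight-through term, and its coefficient is $c_0 = 1$, exactly as $\alpha_{m,n}^0 = 1$. For $c_1$ I would track the factors produced in resolving the single transferred strand, noting that its shape is the same as that of $\alpha_{m,n}^1 = -a^{-1}q^{-(m+n)+2}\{m\}\{n\}/\{1\}$, namely $-a^{-1}(\text{power of }q)\{m\}\{n\}/\{1\}$: the factor $-a^{-1}$ arises exactly as the $(-a)^{-1}$ in $\alpha_{m,n}^1$ from the framing contribution of skein relation (iii), and the factor $\{m\}\{n\}/\{1\}$ is the same strand-counting factor as in $\alpha_{m,n}^1$. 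The only quantity that changes is the power of $q$, because the transferred strand now passes the remaining $n-1$ strands of $E_n$, whose absorption eigenvalue is $-q^{-1}$ (Figure \ref{fig:qsymp}, bottom row) rather than the symmetrizer eigenvalue $q$. Re-collecting these contributions yields the power $q^{m-n}$, hence $c_1 = -a^{-1}q^{m-n}\{m\}\{n\}/\{1\}$, as claimed.

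The main obstacle is this last computation of the exponent of $q$: getting it to come out as $m-n$ (rather than the $-(m+n)+2$ of $\alpha_{m,n}^1$) demands careful sign-bookkeeping of the eigenvalue contributions of $H_m$ and $E_n$, since replacing a symmetrizer by an antisymmetrizer flips the $q$-weight of every crossing the transferred strand makes with that bundle. By contrast, the vanishing argument and the evaluation of $c_0$ are structurally identical to the symmetric case of Lemma \ref{lemma:alpha} and should present no difficulty.
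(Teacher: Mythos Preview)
Your proposal is correct and follows essentially the same approach as the paper: the paper's entire proof is the sentence ``In the same way as Lemma~\ref{lemma:alpha}, by applying the vanishing property of $H_m$ and $E_n$,'' which is exactly your plan of rerunning the $\alpha$-expansion and truncating via the $H_m$/$E_n$ vanishing property of Figure~\ref{fig:vanishing property}. Your identification of the $q$-exponent in $c_1$ as the only delicate point is also accurate.
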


\begin{remark}
In Figure \ref{fig:alpha} and \ref{fig:antialpha},
when all the negative crossings change into positive crossings,
the coefficients also change so that $a, q$ are replaced by $a^{-1}, q^{-1}$, respectively.
For our convenience, we set $\bar{\alpha}_{m,n}^i=\alpha_{m,n}^i(a^{-1},q^{-1})$. 
\end{remark}

\begin{lemma}[\cite{Kawagoe4}]\label{lemma:beta}
For integers $ 0 \leq i  \leq  j \leq \min\{m,n \}$, we have
%for integers $ 0 \leq i \leq j \leq \min\{m,n \}$, we have
\begin{figure}[H]
\centering
\includegraphics[width=75mm,pagebox=cropbox,clip]{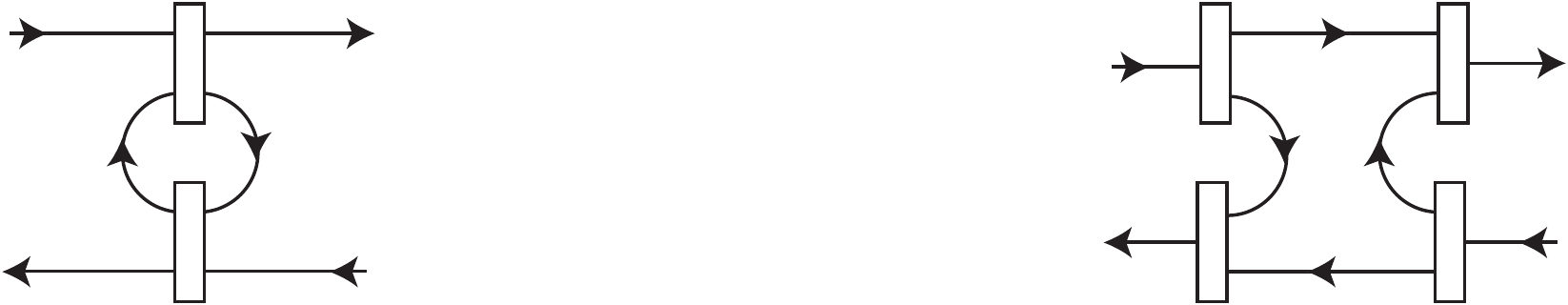}
\begin{picture}(0,0)(0,0)
\put(-214,40){\scriptsize $m-i$} \put(-185,40){\scriptsize $m-j$}
\put(-208,20){\scriptsize $i$}  \put(-178,20){\scriptsize $j$}
\put(-214,-2){\scriptsize $n-i$}  \put(-185,-2){\scriptsize $n-j$}
%\put(-155,18){$\displaystyle =   \sum_{k=0}^{k'} \; \beta_{i,j:m,n}^k$}
\put(-155,18){$\displaystyle =   \sum_{k=0}^{i} \; \beta_{i,j:m,n}^k$}
\put(-73,36){\scriptsize $m-i$} \put(-15,36){\scriptsize $m-j$}
\put(-73,1){\scriptsize $n-i$}  \put(-15,1){\scriptsize $n-j$}
\put(-52,42){\scriptsize $m-j-k$} \put(-52,-5){\scriptsize $n-j-k$}
\put(-78,18){\scriptsize $j-i+k$}  \put(-32,12){\scriptsize $k$}
\end{picture}
\caption{}
\label{fig:beta}
\end{figure}
\noindent
where  $\beta_{i,j:m,n}^k =  \beta_{i,j:m,n}^k(a,q)$ is given by
\begin{align*}
 \beta_{i,j:m,n}^k = 
\frac{  \{ m -j \}_k   }{   \{ m \}_i    }   \frac{  \{ n -j \}_k   }{   \{ n \}_i    } \{ j  \}_{i-k}
\left[ \begin{array}{@{\,}c@{\,}}i \\ k \end{array} \right]
 \{ m + n -j -k -1 ; a \}_{i-k}.
\end{align*}
\end{lemma}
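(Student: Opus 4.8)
The plan is to prove the identity of Figure \ref{fig:beta} by induction on $i$, keeping $j$, $m$, $n$ fixed. I read the left-hand diagram as the vertical composition of two turnback (clasp) tangles that share the central pair of $q$-symmetrizers: a clasp of width $i$ sitting above a clasp of width $j$, while the $m-i$, $n-i$ strands on the left and the $m-j$, $n-j$ strands on the right run straight through. The right-hand side is then the re-expansion of this composite in the turnback basis, the index $k$ recording how many of the $i$ strands of the upper clasp persist as a genuine turnback of width $k$ (the right inner cable), the remaining $i-k$ being reabsorbed so that the left inner cable carries $j-i+k$.

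First I would dispose of the base case $i=0$: the upper clasp is then trivial, the left-hand side reduces to the lone $j$-clasp, and the sum on the right collapses to its $k=0$ term, for which a direct substitution gives $\beta_{0,j:m,n}^{0}=1$; the two sides agree.

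For the inductive step I would peel one strand off the $i$-th $q$-symmetrizer of the upper clasp by the recursion of Figure \ref{fig:qsym}, which rewrites the width-$i$ turnback in terms of a width-$(i-1)$ turnback together with one extra strand carrying a single crossing and the scalars $q^{-i+1}/[i]$ and $[i-1]/[i]$. I would then slide this extra strand through the diagram and absorb it using the crossing-absorption and turnback-evaluation relations of Figure \ref{fig:qsymp}---each time a strand issuing from an $\ell$-th symmetrizer turns back it contributes a factor $\{\ell-1;a\}/\{\ell\}$, and the relevant $\ell$ decreases as strands are removed---invoking Lemma \ref{lemma:alpha} to resolve any residual crossing band into turnbacks with coefficients $\alpha_{m,n}^{i}$, and discarding the terms killed by the vanishing property of Figure \ref{fig:vanishing property}. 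Geometrically the peeled strand is either adjoined to the $k$ surviving strands or reabsorbed into the lower clasp, so the output splits into exactly two contributions, reducing the claim at $(i,k)$ to the inductive hypotheses at $(i-1,k-1)$ and $(i-1,k)$.

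What then remains is to verify a two-term recursion
\begin{align*}
\beta_{i,j:m,n}^{k}=A\,\beta_{i-1,j:m,n}^{k-1}+B\,\beta_{i-1,j:m,n}^{k},
\end{align*}
where $A$ and $B$ are the explicit scalars produced by the peeling; with the closed form in hand this becomes an elementary manipulation of the products $\{\,\cdot\,\}_{\ast}$ together with the Pascal-type recursion for the $q$-binomial coefficient $\left[\begin{array}{@{\,}c@{\,}}i \\ k\end{array}\right]$. The main obstacle is the bookkeeping of the $a$-graded factor $\{m+n-j-k-1;a\}_{i-k}$: its leading argument shifts by one each time a strand is reabsorbed through a symmetrizer, so the delicate point is to keep the running index of the symbols $\{\,\cdot\,;a\}$ exactly aligned across the two branches of the recursion, after which the $q$-binomial identity closes the computation and all remaining $\{\,\cdot\,\}$-products cancel routinely.
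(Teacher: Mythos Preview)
The paper does not prove this lemma; it is quoted from \cite{Kawagoe4} with no argument supplied here, so there is no in-paper proof against which to compare your proposal.

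On its own merits, your inductive outline is the natural approach and is broadly sound in shape. Two points would need tightening before it counts as a proof. First, the invocation of Lemma~\ref{lemma:alpha} looks out of place: the diagram of Figure~\ref{fig:beta} carries no crossings, and the single crossing introduced when you peel a strand via the recursion of Figure~\ref{fig:qsym} is absorbed directly by the crossing-absorption rule of Figure~\ref{fig:qsymp}, so no full $\alpha$-expansion into turnbacks is required. Second, and more substantively, you assert but do not verify the two-term recursion for $\beta_{i,j:m,n}^{k}$: the scalars $A$ and $B$ are never written out, and the claim that the peeled strand produces ``exactly two contributions'' after applying the vanishing properties is precisely the combinatorial content of the lemma. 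To close the argument you would need to track the peeled strand explicitly through the middle pair of symmetrizers, record the surviving diagrams and their scalars, and then check the resulting identity among $\{\,\cdot\,\}_{\ast}$-products and the $q$-binomial against the stated closed form.
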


\begin{remark}
In Figure \ref{fig:alpha} and Figure \ref{fig:beta}, 
when all the $q$-symmetrizers change into $q$-antisymmetrizers,
the coefficients also change so that $q$ is replaced by $-q^{-1}$.
\end{remark}

The skein module of the solid torus $S^1 \times D^2$ is denoted by $\mathcal{S}$.
Here, we consider  submodules of  $\mathcal{S}$.
% $n$ copies of $K$ inserted by $H_n$ with compatible orientations.
%Let $H_n, E_n$ denote $n$ anticlockwise parallel circles along $S^1$
%inserted by the $n$-th $q$-symmetrizer, the $n$-th $q$-antisymmerizer, respectively. 
%Let $H_{n,n},  D_{n,n}$ denote  $2n$ circles along $S^1$,
%$n$ anticlockwize and $n$ clockwise, inserted 
%two $n$-th $q$-symmetrizer, one $(n,n)$-th $q$-symmetrizer, respectively.
For a circle along $S^1$ of the solid torus,
we define $H_n \in \mathcal{S}$ by $n$ copies of the circles 
inserted by the $n$-th $q$-symmetrizer.
Similary, we define $E_n$ by the $n$-th $q$-antisymmerizer.
Each symmetrizer is compatible with %with respect to 
the orientation of the circle.
We define $H_{n,n}$ by two copies of $H_n$,
one is anticlockwise and the other is clockwise.
For two circles along $S^1$, one is anticlockwise and the other is clockwise,
we define $D_{m,n}$ by $m$ copies of  the anticlockwise circle and
$n$ copies of the clockwise inserted by the $(m,n)$-th $q$-symmetrizer.
%Let  $D_{n,n}$ denote closing element of the $(n,n)$-th $q$-symmetrizer.
%$H_{n,n}, E_{n,n}$ be and $D_n$ be

Let $\mathcal{H}_{n,n}$ and $\mathcal{D}_{n,n}$
be the submodules spaned by $H_{i,i}$ and ${D}_{i,i}$ for $i=0,\ldots,n$, respectively.
Then,  Lemma \ref{lemma:beta} implies that
$\mathcal{D}_{n,n}$ is spaned by $H_{i,i}$ for $i=0,\ldots,n$, and vice versa.
Hence, we have $\mathcal{D}_{n,n} = \mathcal{H}_{n,n}$.
%\begin{align*}
%\mathcal{H}_n &= \mathcal{E}_n, \\
%\mathcal{D}_{n,n} &= \mathcal{H}_{n,n} = \mathcal{E}_{n,n}.
%\end{align*}

%Let $\langle \; , \; \rangle$ be the bilinear form on $\mathcal{S}$ to  $\mathbb{C}$
%by cabling the Hopf link with the zero framing, and  evaluating it in $S^{3}$.
Let $\langle \;  \; \rangle$ be the linear map on  $\mathcal{S}$ to  $\mathbb{C}$
defined by  evaluating it in $S^3$.
% $\langle \;  \; \rangle = \langle \; , 1 \rangle$.
%Let $\langle \; , \; \rangle$ be the bilinear form  on defined by $\mathbb{C}$ 
Let $t : \mathcal{S} \to \mathcal{S}$ be the twist map 
induced by one right-handed twist on the solid torus as shown 
in the right-hand side of Figure \ref{fig:full twist},
where the twist is induced at the bottom of the solid torus.
Similarly, let $t^{-1}$ be the twist map induced by one left-hand twist.
For $x \in \mathcal{S}$, let $e_{x} : \mathcal{S} \to \mathcal{S}$ be
the map encircling an element of $\mathcal{S}$ by $x$ as shown 
in the left-hand side of Figure \ref{fig:full twist}, 
where $x$ slides into the bottom of the solid torus and 
encircles the element. 
In Section \ref{Eigenvalues of $D_{n,n}$}, 
we find two elements $\omega_n^+$ and $\omega_n^-$
so that $e_{\omega_n^{\pm}}(D_{n,n}) =t^{\pm 1}(D_{n,n})$.
We give some examples of these maps:
\begin{align*}
\langle H_n \rangle &= \frac{ \{n-1;a\}_n  }{  \{n\}!  }, & 
\langle E_n \rangle &= \frac{ \{-n+1;a\}_n  }{  \{n\}!  }, \\
t(H_n) &= a^{n}q^{n(n-1)} H_n, & t(E_n) &=a^{n}(-q)^{-n(n-1)} E_n, \\
t(D_{m,n}) &= a^{m+ n}q^{m(m-1)+n(n-1)} D_{m,n}. & &
\end{align*}
%where $D_{m,n}$ is defined in the similar way as $D_{n,n}$.

\begin{lemma}\label{lemma:D_{m,n}}
For positive integers $m \geq n $, we have the following:
\begin{align*}
\langle D_{m,n} \rangle =\frac{ \{ m + n-1; a \} \{ m-2;a \}_{m-1} \{ n-2;a \}_{n-1} \{-1;a\}  }{   \{ m \}! \{ n \}! }.
\end{align*}
\end{lemma}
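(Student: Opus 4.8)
The plan is to evaluate $\langle D_{m,n}\rangle$ directly from the definition of the $(m,n)$-th $q$-symmetrizer and to reduce the computation to a single terminating sum of $q$-symbols.

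First I would expand. By Figure \ref{fig:Dmn0} we may write $D_{m,n}=\sum_{i=0}^{n}x_{m,n}^{i}\,T_{i}$, where $T_{i}$ is the closed diagram in the solid torus coming from the $i$-th summand: the $m$-th $q$-symmetrizer (anticlockwise) and the $n$-th $q$-symmetrizer (clockwise), with $m-i$ and $n-i$ strands running straight around the core and $i$ strands of each bundle turning back to meet the other bundle. Since $m\ge n$ the sum runs to $n=\min\{m,n\}$. Because $\langle\,\cdot\,\rangle$ is linear, it suffices to compute each $\langle T_{i}\rangle$ and then carry out $\langle D_{m,n}\rangle=\sum_{i=0}^{n}x_{m,n}^{i}\langle T_{i}\rangle$.

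Next I would evaluate the individual closures $\langle T_{i}\rangle$, which is the technical heart of the argument. In $T_{i}$ the $i$ turn-back arcs join strands of the anticlockwise $m$-symmetrizer to strands of the clockwise $n$-symmetrizer; I would pull these arcs through the two symmetrizers one at a time using the partial-closure identities on the right-hand side of Figure \ref{fig:qsymp}. Each such move removes one strand from a symmetrizer and produces an explicit scalar of the form $\{\,\cdot\,;a\}/\{\,\cdot\,\}$, the anticlockwise and clockwise orientations contributing factors built from $\{m-1;a\},\{m-2;a\},\dots$ and from $\{-n+1;a\},\{-n+2;a\},\dots$ respectively, while each turn-back that closes into a free circle contributes the unknot value $\{0;a\}/\{1\}$. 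After all $i$ arcs are absorbed, what remains is a split union of symmetrized unknots, whose value factors as a product of $\langle H_{k}\rangle=\{k-1;a\}_{k}/\{k\}!$ by multiplicativity of $\langle\,\cdot\,\rangle$ on split links. One may equally organize this step as an iterated application of Lemma \ref{lemma:beta}, rewriting $D_{m,n}$ in the basis of split symmetrizers $H_{a}\sqcup H_{b}$ (the $m\neq n$ analogue of $\mathcal{D}_{n,n}=\mathcal{H}_{n,n}$) and reading off $\langle H_{a}\sqcup H_{b}\rangle=\langle H_{a}\rangle\langle H_{b}\rangle$. A check in the small cases $(m,n)=(1,1),(2,1)$ confirms that this reduction reproduces the claimed value, where for instance $\langle T_{1}\rangle=\langle H_{m}\rangle$ when $n=1$; these cases already show that the naive guess $\langle T_{i}\rangle=\langle H_{m-i}\rangle\langle H_{n-i}\rangle$ is \emph{false}, so the orientation-dependent absorption factors must be tracked with care. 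This bookkeeping is the main obstacle.

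Finally I would assemble the sum. Substituting the values of $x_{m,n}^{i}$ and of $\langle T_{i}\rangle$ and clearing the common prefactor $\{m-2;a\}_{m-1}\{n-2;a\}_{n-1}/(\{m\}!\{n\}!)$, the remaining sum over $i$ is a terminating $q$-hypergeometric sum of $q$-Chu--Vandermonde type in $q$ and $a$; evaluating it in closed form collapses the expression to the single factor $\{m+n-1;a\}\{-1;a\}$, yielding the stated formula. As an independent check I would verify that the claimed formula satisfies the recursion
\begin{equation*}
\langle D_{m,n}\rangle=\frac{\{m+n-1;a\}\{n-2;a\}}{\{m+n-2;a\}\{n\}}\,\langle D_{m,n-1}\rangle,
\end{equation*}
which pins down the closed form from the base case $n=1$ (where $D_{m,1}$ has only two summands and $\langle D_{m,1}\rangle$ is computed directly) and simultaneously cross-checks the absorption coefficients obtained above.
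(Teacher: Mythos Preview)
Your overall plan---expand $D_{m,n}$ via Figure~\ref{fig:Dmn0}, evaluate each closed summand by repeatedly applying the partial-closure identity on the right of Figure~\ref{fig:qsymp}, and then sum---is exactly the paper's route. Two points of divergence deserve comment.

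First, your description of the absorption step contains an error. The clockwise bundle in $D_{m,n}$ still carries the $n$-th \emph{$q$-symmetrizer} (white box), not an antisymmetrizer, so closing its strands produces factors $\{n-1;a\}/\{n\},\ \{n-2;a\}/\{n-1\},\dots$, not the $\{-n+1;a\}$-type factors you list (those belong to $E_n$). Orientation in the solid torus plays no role once one evaluates in $S^3$. The paper's value for the $i$-th summand is simply
\[
\langle T_i\rangle \;=\; \frac{\{m-1;a\}_{m-i}}{\{m\}_{m-i}}\cdot\frac{\{n-1;a\}_{n-i}}{\{n\}_{n-i}}\cdot\frac{\{i-1;a\}_i}{\{i\}!},
\]
obtained by absorbing the $m-i$ and $n-i$ returning strands into the two symmetrizers and recognising the residual $i$-strand closure as $\langle H_i\rangle$; no $\{-k;a\}$ ever appears. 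This is the ``bookkeeping'' you flag, and it is lighter than you anticipate.

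Second, for the final summation the paper does not appeal to a named hypergeometric identity. After clearing the common prefactor one is left with
\[
\sum_{i=0}^n(-1)^i\{m\}_i\{n\}_i\{m+n-i-2;a\}_{n-i}\{n-1;a\}_{n-i},
\]
and the paper collapses this via the elementary two-variable identity
\[
\{m-1;a\}\{n-1;a\}=\{m+n-i-1;a\}\{i-1;a\}+\{m-i\}\{n-i\},
\]
which splits each summand into two pieces so that the alternating sum telescopes to $\{m+n-1;a\}_n\{n-2;a\}_n$. Your invocation of ``$q$-Chu--Vandermonde type'' may well be workable, but it is not carried out, and the sum mixes $\{\cdot\}$ and $\{\cdot;a\}$ symbols in a way that does not match the standard statement without further substitution; the telescoping argument is both shorter and self-contained.
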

\begin{proof}

Since $\langle D_{m,n} \rangle$ is described as follows,
\begin{figure}[H]
\begin{picture}(0,0)(0,0)
\put(28,44){\scriptsize $m$} \put(29,58){\scriptsize $n$}
\put(68,33){$\displaystyle =   \sum_{i=0}^{n} x_{m,n}^i$}
\put(190,33){$\displaystyle =   \sum_{i=0}^{n} x_{m,n}^i$}
\put(262,56){\scriptsize $i$}\put(281,56){\scriptsize $i$}
\put(262,16){\scriptsize $m-i$} \put(263,1){\scriptsize $n-i$}
\end{picture}
\centering
\includegraphics[width=105mm,pagebox=cropbox,clip]{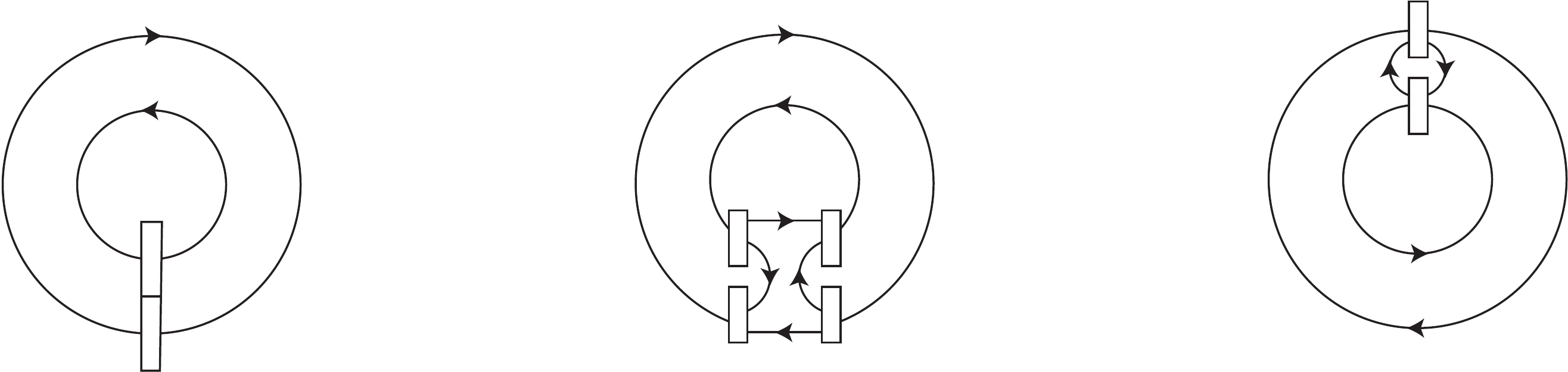}
\caption{The calculation of $\langle D_{m,n} \rangle$}
\label{fig:Dmn1}
\end{figure}
\noindent
$\langle D_{m,n} \rangle$ is given by
\begin{align*}
&\sum_{i=0}^n x_{m,n}^i \frac{\{m-1;a\}_{m-i}}{ \{m\}_{m-i}  } 
\frac{\{n-1;a\}_{n-i}}{ \{n\}_{n-i}   }  \frac{\{i-1;a\}_{i}}{ \{ i \}!   }\\
%\langle H_{i} \rangle\\
=&
\sum_{i=0}^n (-1)^i 
\frac{  \{i-1;a \}_{i}  }{  \{m+n-2;a\}_{i}  }
\frac{  \{m-1;a \}_{m-i}  }{  \{m-i\}!  }
\frac{  \{n-1;a \}_{n-i}  }{  \{n-i\}!  } \\
=&
\frac{ \{m-1;a \}_m  }{ \{ m+n-2;a \}_n \{m\}! \{ n \}! } \\
&\quad \times \sum_{i=0}^n (-1)^i  \{m\}_i \{n\}_i \{m+n-i-2;a\}_{n-i} \{n-1;a\}_{n-i}.
\end{align*}
Here, we use the following transformation:
\begin{align*}
%\{ m-n +i \} \{ i \} = \{m-1;a\} \{n-1;a\} - \{m+i-1;a \} \{n-i-1;a\}.\\
\{m-1;a\} \{n-1;a\} =\{m+n-i-1;a \} \{i-1;a\} + \{ m-i \} \{ n-i \}.
\end{align*}
Then, each term in the sum is expressed by 
\begin{align*}
&\{m\}_i \{n\}_i \{m+n-i-2;a\}_{n-i} \{n-1;a\}_{n-i} \\
%=& \{ m \}_{i-1} \{n\}_{i-1} \{m-i+1\} \{ n-i+1\}  \{m+n-i-2;a\}_{n-i} \{n-1;a\}_{n-i} \\
=& \{ m \}_{i} \{n\}_{i}  \{m+n-i-2;a\}_{n-i-1}  \{ m-1;a\} \{ n-1;a \}  \{n-2;a\}_{n-i-1} \\
=& \{ m \}_{i} \{n\}_{i}  \{m+n-i-1;a\}_{n-i} \{n-2;a\}_{n-i} \\
&\quad + \{ m \}_{i+1} \{ n \}_{i+1}  \{m+n-i-2;a\}_{n-i-1}  \{n-2;a\}_{n-i-1}.
\end{align*}
%We sum up these terms  from $i=0,\ldots,n-1$, multiplided by $(-1)^i$, then 
Hence, we have
\begin{align*}
&\sum_{i=0}^n (-1)^i  \{m\}_i \{n\}_i \{m+n-i-2;a\}_{n-i} \{n-1;a\}_{n-i} \\
= &
\sum_{i=0}^{n-1} (-1)^i \Bigl(
 \{ m \}_{i} \{n\}_{i}  \{m+n-i-1;a\}_{n-i} \{n-2;a\}_{n-i} \\
&\quad + \{ m \}_{i+1} \{ n \}_{i+1}  \{m+n-i-2;a\}_{n-i-1}  \{n-2;a\}_{n-i-1} 
\Bigr) \\
&\quad +  (-1)^n   \{ m \}_{n} \{n\}_{n} \\
=&
 \{m+n-1;a\}_{n} \{n-2;a\}_{n}.
\end{align*}
Therefore, we obtain
\begin{align*}
\langle D_{m,n} \rangle  
&= \frac{ \{m-1;a \}_m  }{ \{ m+n-2;a \}_n \{m\}! \{ n \}! }  \{m+n-1;a\}_{n} \{n-2;a\}_{n} \\
&= \frac{ \{ m + n-1; a \} \{ m-2;a \}_{m-1} \{ n-2;a \}_{n-1} \{-1;a\}  }{   \{ m \}! \{ n \}! }.
\end{align*}
\end{proof}

\section{Eigenvalues of $D_{n,n}$}\label{Eigenvalues of $D_{n,n}$}
In this section, we find several eigenvalues of  $D_{n,n}$ concerning the encircling map $e$
and the twist map $t$.
\subsection{The eigenvalue for the encircling map of $H_i$}
\begin{lemma}\label{lemma:sigma}
For an integer $i \geq 0$,
%a map  encircling $D_{n,n}$ by $H_i$
we have $e_{H_i}(D_{n,n})=\sigma_{n,i}D_{n,n}$, where
\begin{align*}
\sigma_{n,i} &= 
\sum_{\substack{ 0 \leq j+k \leq i \\ j < k }}  
\Bigl(
(-1)^j
 a^{-j+k} q^{\epsilon_{j,k}} 
+(-1)^k a^{-k+j} q^{\epsilon_{k,j}}
\Bigr) \frac{ \{n\}_j \{ n \}_k     }{  \{ i -j - k \}!  } \{ i-1;a \}_{i-j-k}               \\ 
&\qquad + \sum_{j=0}^{\lfloor \frac{i}{2} \rfloor } 
(-1)^j \frac{ \{n\}_j \{ n \}_j     }{  \{ i -2 j  \}!  } \{ i-1;a \}_{i- 2 j}, \\
\epsilon_{j,k} &={(-j+k)(i+n) +\frac{j(j+3)}{2} - \frac{k(k+3)}{2}}.
\end{align*}
\end{lemma}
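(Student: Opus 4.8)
The plan is to compute $e_{H_i}(D_{n,n})$ directly from its diagram. Geometrically, $e_{H_i}(D_{n,n})$ is the bundle $H_i$ of $i$ coherently oriented strands, carrying its $i$-th $q$-symmetrizer, encircling the two oppositely oriented $n$-strand bundles that make up $D_{n,n}$. First I would push $H_i$ through the anticlockwise $n$-bundle and then through the clockwise $n$-bundle, resolving each crossing block by Lemma \ref{lemma:alpha}: for the coherently oriented passage I use the coefficients $\alpha_{i,n}^{\,j}$, and for the oppositely oriented passage I use the conjugate coefficients $\bar{\alpha}_{i,n}^{\,k}=\alpha_{i,n}^{\,k}(a^{-1},q^{-1})$ together with Lemma \ref{lemma:antialpha}. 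This expresses $e_{H_i}(D_{n,n})$ as a double sum indexed by the number $j$ of strands of $H_i$ fused into the anticlockwise bundle and the number $k$ fused into the clockwise bundle, with $j+k\le i$.

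Next I would invoke the idempotent and vanishing properties of Figure \ref{fig:vanishing property}. After the two passages, every strand of $H_i$ that has been fused is reabsorbed by the $(n,n)$-th $q$-symmetrizer of $D_{n,n}$, so only the diagram that reproduces $D_{n,n}$ survives; all other terms contain a symmetrizer meeting an incompatible turnback and vanish. This simultaneously shows that $D_{n,n}$ is an eigenvector and reduces each surviving term to $D_{n,n}$ times a scalar. The residual $i-j-k$ strands of $H_i$ that are fused into neither bundle close up into a single symmetrized loop; evaluating it by the standard symmetrizer closure (as in the computation of $\langle H_m\rangle$, but with the framing variable shifted by the $j+k$ absorbed strands) produces the factor $\{i-1;a\}_{i-j-k}/\{i-j-k\}!$.

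Then I would assemble the scalar. Multiplying the two resolution coefficients $\alpha_{i,n}^{\,j}$ and $\bar{\alpha}_{i,n}^{\,k}$ by the loop evaluation and simplifying, the $\{i\}_j$ and $\{i\}_k$ factors coming from $\alpha$ recombine with the $\{j\}!,\{k\}!$ denominators and the residual count so that the surviving numerators are exactly $\{n\}_j\{n\}_k$; the sign and the $a$-power collapse to $(-1)^j a^{k-j}$, and the accumulated $q$-powers collapse to $q^{\epsilon_{j,k}}$ with $\epsilon_{j,k}=(-j+k)(i+n)+\tfrac{j(j+3)}{2}-\tfrac{k(k+3)}{2}$, where the opposite sign of the second term reflects the $q\to q^{-1}$ in $\bar{\alpha}$. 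This gives
\[
e_{H_i}(D_{n,n})=\Bigl(\sum_{\substack{j,k\ge 0\\ j+k\le i}} (-1)^j a^{k-j}\, q^{\epsilon_{j,k}}\,\frac{\{n\}_j\{n\}_k}{\{i-j-k\}!}\,\{i-1;a\}_{i-j-k}\Bigr)D_{n,n}.
\]
Finally I would reindex this double sum by separating the diagonal $j=k$, on which $\epsilon_{j,j}=0$ and $a^{k-j}=1$, from the off-diagonal terms, which I pair as $(j,k)$ with $(k,j)$ for $j<k$; since $a^{-k+j}=a^{j-k}$, this yields exactly the two displayed sums defining $\sigma_{n,i}$.

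The main obstacle I expect is the bookkeeping of the $a$- and $q$-powers through the oppositely oriented passage and the residual closure: tracking the framing contributions when $H_i$ crosses the clockwise bundle (where the conjugated coefficients enter) and justifying the shift to $\{i-1;a\}$ in the loop evaluation. A secondary point requiring care is the rigorous application of the vanishing properties to guarantee that the output is genuinely proportional to $D_{n,n}$ rather than a combination of the $D_{m,m}$, i.e. that $D_{n,n}$ is indeed an eigenvector.
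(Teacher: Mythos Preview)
Your overall strategy matches the paper's, but there is a genuine gap at the step you yourself flag as the ``main obstacle.'' After applying $\alpha_{i,n}^{j}$ and $\bar\alpha_{i,n}^{k}$, the diagram is \emph{not} yet a copy of $D_{n,n}$ with a free $(i-j-k)$-loop attached: what you actually have are the two $i$-th $q$-symmetrizers on the $H_i$ circle joined by $i-j$ strands on one arc and $i-k$ on the other, with $j$ and $k$ strands feeding into the top and bottom of $D_{n,n}$. Nothing vanishes at this stage, and there is no closed $(i-j-k)$-loop to evaluate. The paper resolves this configuration with Lemma~\ref{lemma:beta}: one rewrites the pair of $i$-symmetrizers as a sum over $l$ with coefficients $\beta_{i-k,i-j:i,i}^{\,l}$ (for $j\le k$; the roles of $j,k$ swap otherwise), and \emph{then} the vanishing property of the $(n,n)$-th $q$-symmetrizer kills every term except $l=j$ (resp.\ $l=k$). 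The surviving coefficient
\[
\beta_{i-k,i-j:i,i}^{\,j}\;=\;\frac{\{j\}!\,\{k\}!}{\{i\}_j\,\{i\}_k\,\{i-j-k\}!}\,\{i-1;a\}_{\,i-j-k}
\]
is exactly what produces both the ``shift'' to $\{i-1;a\}$ that you could not justify and the cancellation of $\{i\}_j\{i\}_k/(\{j\}!\{k\}!)$ against the $\alpha\bar\alpha$ factors. Without invoking Lemma~\ref{lemma:beta}, your claimed loop evaluation $\{i-1;a\}_{i-j-k}/\{i-j-k\}!$ is unmotivated: a genuine closed $H_{i-j-k}$ would give $\{i-j-k-1;a\}_{i-j-k}/\{i-j-k\}!$, which is wrong.

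A smaller point: Lemma~\ref{lemma:antialpha} plays no role here. Both passages of $H_i$ through the two $n$-bundles of $D_{n,n}$ are handled by Lemma~\ref{lemma:alpha} --- once with $\alpha$ (negative crossings) and once with $\bar\alpha$ (positive crossings), as in the remark following that lemma. Lemma~\ref{lemma:antialpha} concerns a $q$-antisymmetrizer meeting a $q$-symmetrizer and is used only in the $E_i$ computation of Lemma~\ref{lemma:tau}.
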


\begin{proof}
 $e_{H_i}(D_{n,n})$ is described by
\begin{figure}[H]
\begin{picture}(0,0)(0,0)
\put(38,213){\scriptsize $i$} \put(58,208){\scriptsize $n$} \put(58,186){\scriptsize $n$}
\put(80,196){$=$}  
\put(141,213){\scriptsize $i$} \put(161,208){\scriptsize $n$} \put(161,186){\scriptsize $n$}
\put(-15,140){$\displaystyle =   \sum_{j,k=0}^{i} \alpha_{i,n}^j \bar{\alpha}_{i,n}^k$}
\put(89,162){\scriptsize $n-j$} \put(128,162){\scriptsize $n$}
\put(69,148){\scriptsize $j$} \put(69,133){\scriptsize $k$}
\put(91,152){\scriptsize $i-j$} \put(91,128){\scriptsize $i-k$}
\put(114,149){\scriptsize $j$} \put(114,130){\scriptsize $k$}
\put(89,118){\scriptsize $n-k$} \put(128,118){\scriptsize $n$}
\put(-15,82){$\displaystyle =   \sum_{0 \leq j \leq k \leq i}  \sum_{l=0}^{j}
\alpha_{i,n}^j \bar{\alpha}_{i,n}^k \beta_{i-k,i-j:i,i}^l$}
\put(169,112){\scriptsize $n-j$} \put(207,112){\scriptsize $n$}
\put(147,102){\scriptsize $j$}  \put(144,83){\scriptsize $j-l$}  \put(147,66){\scriptsize $k$}
\put(176,97){\scriptsize $l$} \put(174,69){\scriptsize $k'_l$} 
\put(194,104){\scriptsize $j$} \put(173,83){\scriptsize $j-l$}  \put(194,62){\scriptsize $k$}
\put(169,54){\scriptsize $n-k$} \put(207,54){\scriptsize $n$}
\put(40,26){$\displaystyle +   \sum_{0 \leq k < j \leq i}  \sum_{l=0}^{k}
\alpha_{i,n}^j \bar{\alpha}_{i,n}^k \beta_{i-j,i-k:i,i}^l$}
\put(240,56){\scriptsize $n-j$}  \put(278,56){\scriptsize $n$}
\put(218,46){\scriptsize $j$}  \put(215,26){\scriptsize $k-l$}  \put(218,10){\scriptsize $k$}
\put(246,41){\scriptsize $j'_l$} \put(246,13){\scriptsize $l$}
\put(265,58){\scriptsize $j$} \put(244,26){\scriptsize $k-l$}  \put(265,6){\scriptsize $k$}
\put(240,-2){\scriptsize $n-k$} \put(278,-2){\scriptsize $n$}
\end{picture}
\centering
\includegraphics[width=100mm,pagebox=cropbox,clip]{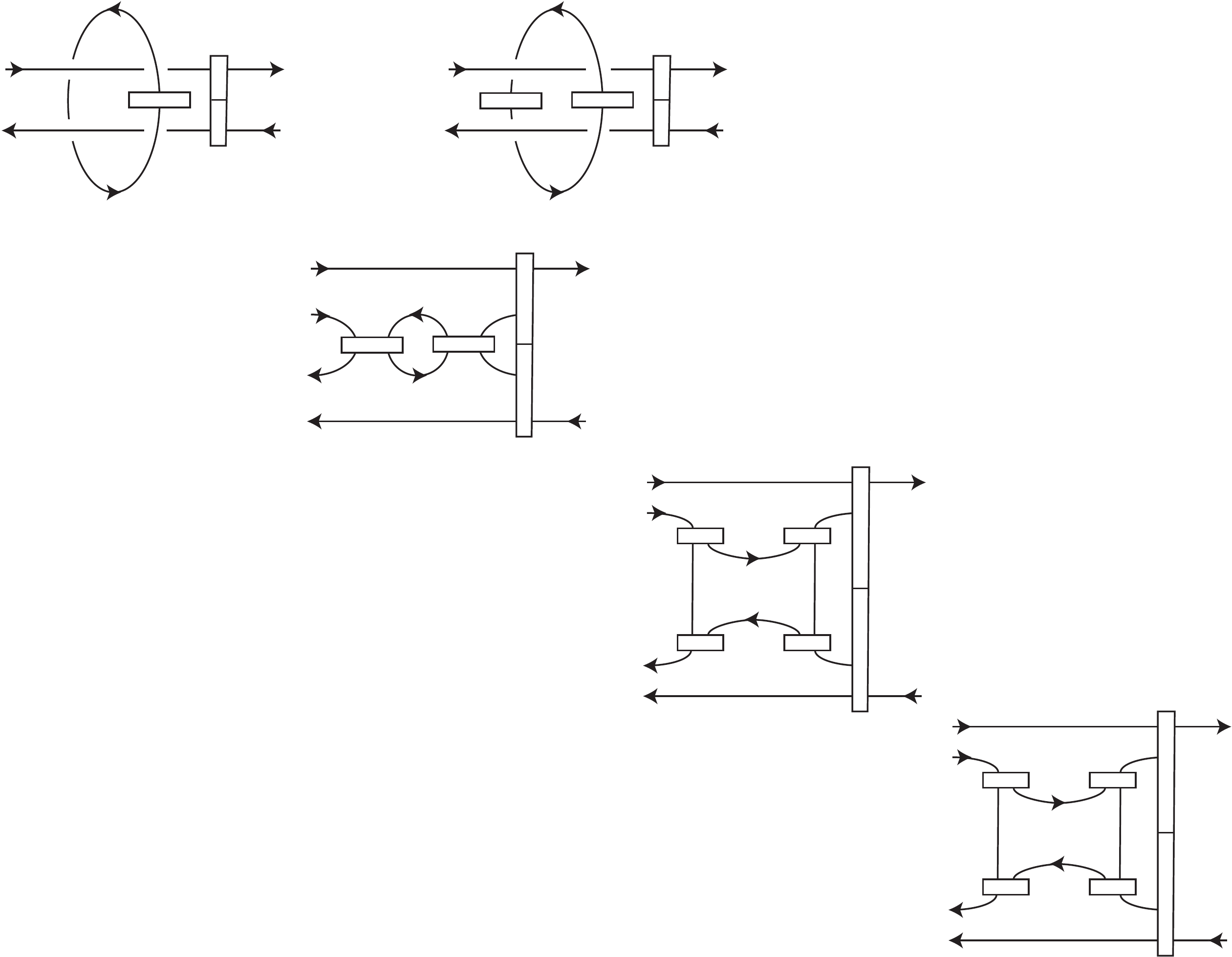}
\caption{$e_{H_i}(D_{n,n})$}
\label{fig:theta0}
\end{figure}
\noindent
where %$\bar{\alpha}_{n,n}^k=\alpha_{n,n}^k(a^{-1},q^{-1}),\;
$k'_l=k-j+l,  j'_l=j-k+l$.
From the vanishing property of $D_{n,n}$, the first double sum does not vanish 
if $l=j$, and the second double sum does not vanish if $l=k$.
%Here we set $\displaystyle \epsilon_{j,k}={(-j+k)(i+n) +\frac{j(j+3)}{2} - \frac{k(k+3)}{2}}$, then
Hence, we obtain
\begin{align*}
\sigma_{n,i}= 
\sum_{0 \leq j \leq k \leq i}  
\alpha_{i,n}^j \bar{\alpha}_{i,n}^k \beta_{i-k,i-j:i,i}^j
+
\sum_{0 \leq k < j \leq i} 
\alpha_{i,n}^j \bar{\alpha}_{i,n}^k \beta_{i-j,i-k:i,i}^k.
\end{align*}
Furthermore, since $\beta$ has the following symmetry:
\begin{align*}
\beta_{i-k,i-j:i,i}^j 
&= \frac{  \{ j \}_j   }{   \{ i \}_{i-k}   }  \frac{  \{ j \}_j   }{  \{ i \}_{i-k}    } \{ i-j\}_{i-j-k}
\left[ \begin{array}{@{\,}c@{\,}}i-k \\ j \end{array} \right]
\{ i -1;a\}_{i-j-k} \\
&=
\frac{ \{ j \}! \{ k \}! }{  \{ i \}_j \{ i \}_k  \{ i-j-k \}!   } \{i-1;a \}_{i-j-k} \\
&=  \beta_{i-j,i-k:i,i}^k,
\end{align*}
$\sigma_{n,i}$ is given by
\begin{align*}
%& \alpha_{m,n}^i = (-1)^i a^{-i} q^{-i (m+n) +\frac{i(i+3)}{2}}
%\frac{  \{ m \}_i \{ n \}_i    }{   \{ i \} !        } \\
%&  \beta_{i,j:m,n}^k = 
%\frac{  \{ m -j \}_k   }{   \{ m \}_i    }   \frac{  \{ n -j \}_k   }{   \{ n \}_i    } \{ j  \}_{i-k}
%\left[ \begin{array}{@{\,}c@{\,}}i \\ k \end{array} \right]
% \{ m + n -j -k -1 ; a \}_{i-k} \\
\sigma_{n,i} &= \sum_{\substack{ 0 \leq j+k \leq i \\ j < k }}  
(\alpha_{i,n}^j \bar{\alpha}_{i,n}^k     + \alpha_{i,n}^k \bar{\alpha}_{i,n}^j     ) \beta_{i-k,i-k:i,i}^j 
+ 
\sum_{j=0}^{\lfloor \frac{i}{2} \rfloor } \alpha_{i,n}^j \bar{\alpha}_{i,n}^j  \beta_{i-j,i-j:i,i}^j \\
%&\alpha_{i,n}^j \bar{\alpha}_{i,n}^k \beta_{i-j,i-k:i,i}^k \\
%=\;
%%&(-1)^j a^{-j+k} q^{(-j+k)(i+n) +\frac{j(j+3)}{2} - \frac{k(k+3)}{2}}
%&(-1)^j a^{-j+k} q^{  \epsilon_{j,k}  }
%\frac{  \{ i \}_j \{ n \}_j    }{   \{ j \} !        } 
%\frac{  \{ i \}_k \{ n \}_k    }{   \{ k \} !        }  %\\
%%& \qquad \times 
%\Bigl(\frac{  \{ k \}_k   }{   \{ i \}_{i-j}    }\Bigr)^2  \{ i-k  \}_{i-j-k}
%\left[ \begin{array}{@{\,}c@{\,}}i-j \\ k \end{array} \right]
% \{ i -1 ; a \}_{i-j-k} \\
%= \; 
%%&(-1)^j a^{k-j} q^{(k-j)(i+n) +\frac{j(j+3)}{2} - \frac{k(k+3)}{2}}
%&(-1)^j a^{-j+k} q^{  \epsilon_{j,k}  }
%\frac{ \{n\}_j \{ n \}_k     }{  \{ i -j - k \}!  } \{ i-1;a \}_{i-j-k}
%\end{align*}
%Since $\beta_{i-j,i-k:i,i}^k = \beta_{i-k,i-j:i,i}^j$,  the scalar is given by
%\begin{align*}
&=
\sum_{\substack{ 0 \leq j+k \leq i \\ j < k }}  
\Bigl(
(-1)^j
 a^{-j+k} q^{\epsilon_{j,k}} 
+(-1)^k a^{-k+j} q^{\epsilon_{k,j}}
\Bigr) \frac{ \{n\}_j \{ n \}_k     }{  \{ i -j - k \}!  } \{ i-1;a \}_{i-j-k}               \\ 
&\qquad + \sum_{j=0}^{\lfloor \frac{i}{2} \rfloor } 
(-1)^j \frac{ \{n\}_j \{ n \}_j     }{  \{ i -2 j  \}!  } \{ i-1;a \}_{i- 2 j}.
\end{align*}
\end{proof}

\subsection{The eigenvalue for the encircling map of $E_i$}
%In the same way of $H_i$, 
%It is more simple by the vanishing property  between $E_i$ and $H_n$ in $D_{n,n}$.
\begin{lemma}\label{lemma:tau}
For an integer $i \geq 0$,
%a map  encircling $D_{n,n}$ by $H_i$
we have $e_{E_i}(D_{n,n})=\tau_{n,i}D_{n,n}$, where 
\begin{align*}
\tau_{n,i} =  
\frac{   \{ -i +1;a \}_i      }{  \{ i \}!      }    +  \frac{   \{ -i;a \}_{i-1}      }{  \{ i-1 \}!     }  \{ n \} \{ n-1;a \}.    
\end{align*}
\end{lemma}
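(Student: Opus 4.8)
The plan is to follow the same strategy as the proof of Lemma~\ref{lemma:sigma}, drawing $e_{E_i}(D_{n,n})$ as the $i$-th $q$-antisymmetrizer $E_i$ encircling the two $n$-strand symmetrizer bundles of $D_{n,n}$, and then resolving the two crossing regions (one with the anticlockwise bundle, one with the clockwise bundle) back into the basis. The crucial difference from the $\sigma$ computation is that here the encircling loop is an \emph{antisymmetrizer} while the bundles of $D_{n,n}$ are \emph{symmetrizers}, so each crossing region is of the mixed type governed by Lemma~\ref{lemma:antialpha} rather than by the full $\alpha$-expansion of Lemma~\ref{lemma:alpha}. Since Lemma~\ref{lemma:antialpha} produces only a two-term expansion (a straight-through term and a single turn-back term carrying the $\tfrac{\{\cdot\}\{\cdot\}}{\{1\}}$-type coefficient), this is precisely what forces $\tau_{n,i}$ to collapse to two summands instead of the double sum appearing in $\sigma_{n,i}$.

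First I would apply Lemma~\ref{lemma:antialpha} at each of the two crossing regions, using the orientation-reversed version at the clockwise bundle just as $\bar\alpha$ replaces $\alpha$ in the $\sigma$ proof. Expanding the product of the two two-term resolutions yields at most four diagrams, indexed by whether $0$ or $1$ strand turns back at each region. I would then impose the idempotent and vanishing properties of $D_{n,n}$ from Figure~\ref{fig:vanishing property}, together with Lemma~\ref{lemma:beta} to re-expand the reduced configurations, and discard those diagrams that do not close back up into a multiple of $D_{n,n}$; I expect exactly two contributions to survive this reduction.

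The first surviving contribution is the straight-through$\,\times\,$straight-through term, in which $E_i$ becomes an unknotted loop unlinked from $D_{n,n}$; evaluating it gives $\langle E_i \rangle\,D_{n,n} = \frac{\{-i+1;a\}_i}{\{i\}!}\,D_{n,n}$, which is exactly the first term of $\tau_{n,i}$. The second contribution comes from the turn-back term(s), where one strand of $E_i$ is absorbed into a bundle of $D_{n,n}$, locally reducing it before it is re-expanded via Lemma~\ref{lemma:beta} back to $D_{n,n}$; after evaluating the resulting closed bubble and multiplying the coefficients of Lemma~\ref{lemma:antialpha}, this should yield $\frac{\{-i;a\}_{i-1}}{\{i-1\}!}\{n\}\{n-1;a\}\,D_{n,n}$. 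Summing the two contributions gives $\tau_{n,i}$.

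The main obstacle I anticipate is the bookkeeping in the second contribution: correctly identifying which turn-back diagrams survive the vanishing property, evaluating the closed loop they create (where the factor $\{n\}\{n-1;a\}$ should emerge), and then simplifying the product of $\{\,\cdot\,;a\}$-symbols coming from the Lemma~\ref{lemma:antialpha} coefficient, the bubble evaluation, and the Lemma~\ref{lemma:beta} re-expansion into the compact closed form $\frac{\{-i;a\}_{i-1}}{\{i-1\}!}$. By contrast, the first term is immediate once one recognizes it as $\langle E_i \rangle$.
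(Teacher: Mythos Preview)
Your overall strategy is exactly the paper's: apply Lemma~\ref{lemma:antialpha} and its orientation-reversed version at the two crossing regions to obtain four diagrams indexed by $(j,k)\in\{0,1\}^2$, then reduce each to a multiple of $D_{n,n}$. The $(0,0)$ term is indeed $\langle E_i\rangle\,D_{n,n}$, giving the first summand of $\tau_{n,i}$.

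The gap is your expectation that the vanishing property leaves only two of the four diagrams. In fact \emph{all four} survive and contribute nontrivially. In the paper's calculation the two mixed diagrams $(1,0)$ and $(0,1)$ each produce a nonzero multiple of $D_{n,n}$ and together contribute
\[
\bigl(aq^{\,n-i}-a^{-1}q^{-n+i}\bigr)\,\frac{\{i\}\{n\}}{\{1\}}\cdot\frac{\{-i+1;a\}_{i-1}}{\{i\}_{i-1}},
\]
while the $(1,1)$ diagram is the one that requires Lemma~\ref{lemma:beta} (in its antisymmetrizer form, $q\mapsto -q^{-1}$), contributing $-\tfrac{\{i\}^2\{n\}^2}{\{1\}^2}\,\beta_{i-1,i-1:i,i}^{1}(a,-q^{-1})$. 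The clean second summand $\tfrac{\{-i;a\}_{i-1}}{\{i-1\}!}\{n\}\{n-1;a\}$ of $\tau_{n,i}$ appears only \emph{after} these three pieces are algebraically combined; it is not the value of any single surviving diagram, and in particular the factor $\{n\}\{n-1;a\}$ does not drop out of a single bubble evaluation. So your instinct that the bookkeeping is the hard part is correct, but the work lies in this three-term simplification rather than in discarding diagrams.
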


\begin{proof}
$e_{E_i}(D_{n,n})$ is described by
\begin{figure}[H]
\begin{picture}(0,0)(0,0)
\put(38,149){\scriptsize $i$} \put(58,144){\scriptsize $n$} \put(58,122){\scriptsize $n$}
\put(27,88){\scriptsize $i$} \put(58,97){\scriptsize $n$} \put(58,55){\scriptsize $n$}
\put(-15,76){$=$} \put(70,76){$\displaystyle - a^{-1}q^{-n+i}\frac{ \{ i \} \{ n \}  }{ \{ 1 \}   }$}
\put(170,97){\scriptsize $n-1$}  \put(207,97){\scriptsize $n$}
\put(170,87){\scriptsize $i-1$} \put(165,67){\scriptsize $i$} 
\put(207,54){\scriptsize $n$}
\put(8,19){$\displaystyle+ aq^{n-i}\frac{ \{ i \} \{ n \}  }{ \{ 1 \}   }$}
\put(129,41){\scriptsize $n$}
\put(88,28){\scriptsize $i$} \put(92,9){\scriptsize $i-1$} 
\put(92,-2){\scriptsize $n-1$}  \put(129,-2){\scriptsize $n$}
\put(150,19){$\displaystyle -  \frac{ \{ i \}^2 \{ n \}^2  }{ \{ 1 \}^2   }  $}
\put(264,41){\scriptsize $n$}
\put(227,30){\scriptsize $i-1$} \put(227,9){\scriptsize $i-1$}  
\put(225,-2){\scriptsize $n-k$} \put(264,-2){\scriptsize $n$}
\end{picture}
\centering
\includegraphics[width=95mm,pagebox=cropbox,clip]{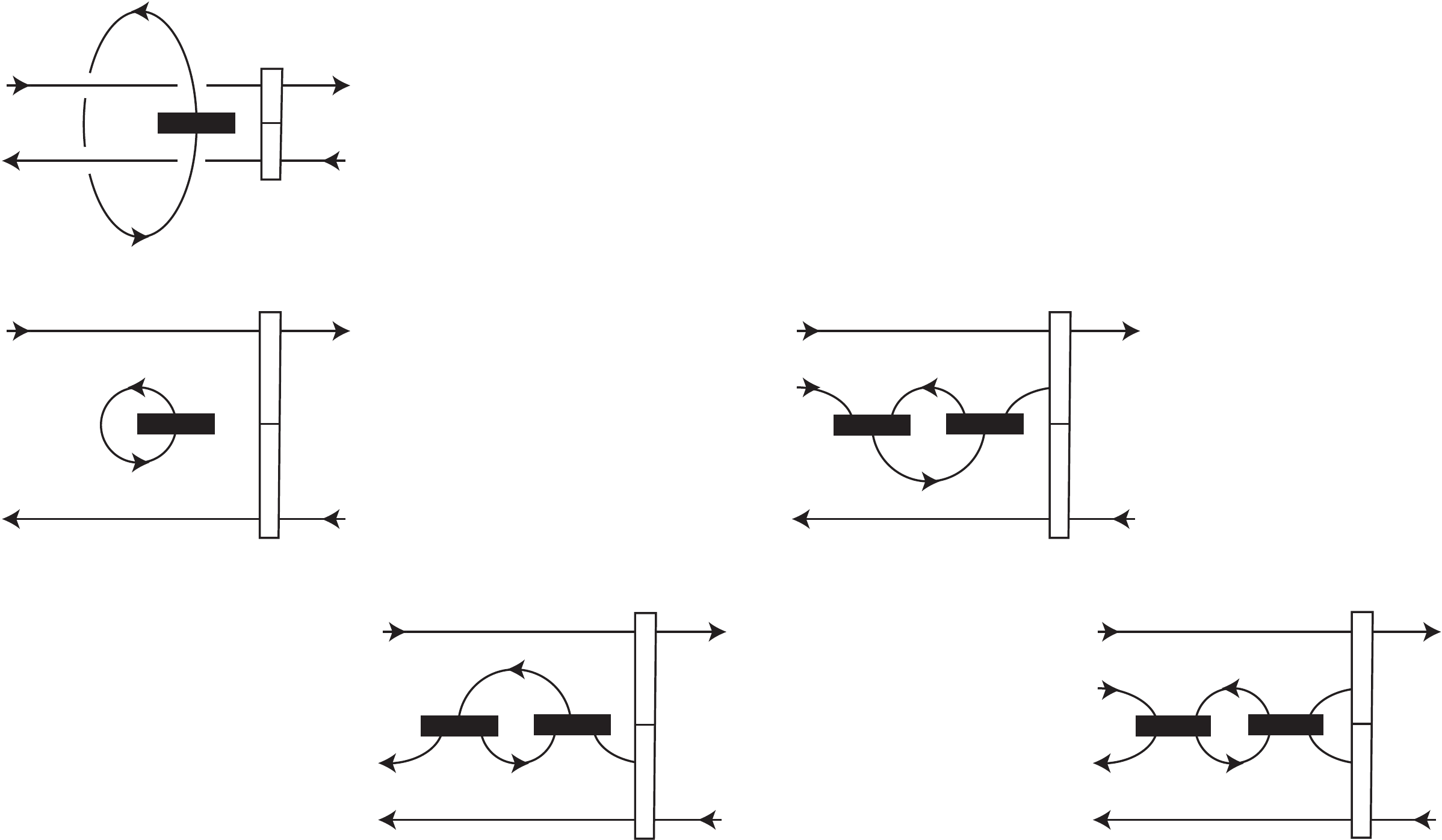}
\caption{$e_{E_i}(D_{n,n})$}
\label{fig:theta1}
\end{figure}
\noindent
Then, the eigenvalue is given by
\begin{align*}
&\frac{  \{ -i+1;a \}_i   }{  \{ i \}!   } +
 ( aq^{n-i} - a^{-1}q^{-n+i}    )\frac{ \{ i \} \{ n \}  }{ \{ 1 \}   }   \frac{ \{ -i +1;a \}_{i-1}   }{  \{i \}_{i-1}       } \\
&\qquad - \frac{ \{ i \}^2 \{ n \}^2  }{ \{ 1 \}^2   }  \beta_{i-1,i-1:i,i}^1 (a,-q^{-1}) \\
=\;
& \frac{   \{ -i +1;a \}_i      }{  \{ i \}!      }    +  \frac{   \{ -i;a \}_{i-1}      }{  \{ i-1 \}!     }  \{ n \} \{ n-1;a \}.    
\end{align*}
\end{proof}
\begin{remark}
%By the definitions of the symbols, 
For $n=0$, we have the following:
\begin{align*}
\sigma_{0,i} =\langle H_i \rangle = \frac{ \{i-1;a \}_i }{ \{ i \}! }, \qquad
\tau_{0,i} =\langle E_i \rangle  = \frac{ \{-i+1;a \}_i }{ \{ i \}! }.
\end{align*}
%are well-defined for $n \geq 0$.
\end{remark}

\subsection{The eigenvalue for the encircling map of $R_i$}
In this subsection, for $i=0,\ldots,n$, we define  elements $R_i \in \mathcal{S}$,
which are an important role to calculate the colored HOMFLY-PY polynomial.
Actually, because the behavior of $H_n$ concerning the encircling map is 
very complicated,
we set $R_n$ as a linear combination of $H_i (i=0,\ldots, n)$
so that the behavior of $R_n$ is simple. 
\begin{definition}
For $i=0, \ldots, n$, we recursively define elements $R_i$ as follows:
\begin{align*} 
R_0 &= H_0 = 1, \\
R_1 &= H_1 - \frac{\{ 0;a \}}{\{ 1 \}}, \\
R_2 &= H_2 - \frac{\{ 2;a \}}{\{ 1 \}} R_1 - \frac{\{ 1;a \}_{2}}{\{ 2 \}!}R_0, \\
     &\vdots \\
R_n &= H_n - \sum_{i=0}^{n-1} \frac{ \{ n-1+i;a \}_{n-i} }{ \{ n-i\}! } R_i.
\end{align*}
\end{definition}

\begin{proposition}
For an integer $i \geq 0$,
we have $e_{R_i}(D_{n,n}) = \theta_{n,i} D_{n,n}$, where  $\theta_{n,i} = \{ n \}_{i} \{ n+i-2;a  \}_{i}$.
%\begin{figure}[H]
%\begin{picture}(0,0)(0,0)
%\put(38,38){$R_i$} \put(59,33){\scriptsize $n$} \put(59,10){\scriptsize $n$}
%\put(85,20){$= \theta_{n,i}$}
%\put(135,32){\scriptsize $n$} \put(135,10){\scriptsize $n$}
%\end{picture}
%\centering
%\includegraphics[width=50mm,pagebox=cropbox,clip]{}
%\caption{$e_{R_i}(D_{n,n})$}
%\label{fig:theta}
%\end{figure}
\end{proposition}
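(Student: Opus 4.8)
The plan is to trade the skein-theoretic statement for a closed-form $q$-summation identity, exploiting the linearity of the encircling operation together with the triangular recursion that defines the $R_i$.

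First I would observe that the assignment $x\mapsto e_x$ is linear in $x$, so for a fixed $n$ the operator $e_{R_i}$ acts on $D_{n,n}$ as a $\mathbb{C}$-linear combination of the operators $e_{H_j}$ with $j\le i$. By Lemma~\ref{lemma:sigma} each $e_{H_j}$ has $D_{n,n}$ as an eigenvector with eigenvalue $\sigma_{n,j}$, whence $e_{R_i}(D_{n,n})=\theta_{n,i}D_{n,n}$ for a scalar $\theta_{n,i}$. Applying $e_{\bullet}(D_{n,n})$ to the defining recursion $R_i=H_i-\sum_{j=0}^{i-1}\frac{\{i-1+j;a\}_{i-j}}{\{i-j\}!}R_j$ then yields
\begin{equation*}
\theta_{n,i}=\sigma_{n,i}-\sum_{j=0}^{i-1}\frac{\{i-1+j;a\}_{i-j}}{\{i-j\}!}\,\theta_{n,j},\qquad \theta_{n,0}=\sigma_{n,0}=1,
\end{equation*}
which determines $\theta_{n,i}$ uniquely from $\sigma_{n,0},\dots,\sigma_{n,i}$.

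Next I would prove $\theta_{n,i}=\{n\}_i\{n+i-2;a\}_i$ by induction on $i$, the case $i=0$ being immediate. Substituting the inductive hypothesis $\theta_{n,j}=\{n\}_j\{n+j-2;a\}_j$ for $j<i$ into the recursion, and using that the $j=i$ coefficient equals $1$, the assertion for $i$ becomes equivalent to the single identity
\begin{equation*}
\sigma_{n,i}=\sum_{j=0}^{i}\frac{\{i-1+j;a\}_{i-j}}{\{i-j\}!}\,\{n\}_j\,\{n+j-2;a\}_j.
\tag{$\star$}
\end{equation*}
Thus the whole Proposition reduces to the purely algebraic identity $(\star)$ in the variables $a,q$, which no longer refers to the skein module.

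The step I expect to be the main obstacle is the proof of $(\star)$. Here I would insert the explicit double sum for $\sigma_{n,i}$ from Lemma~\ref{lemma:sigma} and note that its $n$-dependence enters both through the factors $\{n\}_j\{n\}_k$ and, via the exponent $\epsilon_{j,k}$, through a factor $q^{(k-j)n}$; taken together this makes each side a Laurent polynomial in $q^{n}$ of degree $2i$, so the degrees match and the identity is at least plausible. The real work is to collapse the double sum over $(j,k)$ on the left into the single product-type sum on the right: after reindexing by the total $j+k$ and by the common index $i-j-k$, the inner summation is of $q$-Chu--Vandermonde type, and its evaluation reconstitutes the $a$-shifted products $\{n+j-2;a\}_j$. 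The algebraic engine for regrouping these $a$-dependent factors is the elementary transformation $\{m-1;a\}\{n-1;a\}=\{m+n-i-1;a\}\{i-1;a\}+\{m-i\}\{n-i\}$ already exploited in the proof of Lemma~\ref{lemma:D_{m,n}}, applied repeatedly to telescope the summation. A convenient consistency check along the way is the special value $n=0$, where $\{n\}_j$ annihilates every term with $j\ge1$ and $(\star)$ collapses to $\sigma_{0,i}=\langle H_i\rangle$, in agreement with the Remark following Lemma~\ref{lemma:tau}.
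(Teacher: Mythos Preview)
Your reduction of the Proposition to the identity $(\star)$ is exactly the paper's reduction: it too inverts the triangular relation to write $H_i=\sum_{j\le i}\frac{\{i-1+j;a\}_{i-j}}{\{i-j\}!}R_j$ and declares it enough to show that the eigenvalue of $e_{H_i}$ on $D_{n,n}$ equals the right-hand side of $(\star)$. Where you and the paper diverge is in how $(\star)$ itself is established. You propose to attack the explicit double sum for $\sigma_{n,i}$ from Lemma~\ref{lemma:sigma} head-on. The paper never manipulates that formula beyond the base cases $i\le 2$: for the inductive step it invokes the determinant identity $H_i=\sum_{j=1}^{i}(-1)^{j-1}H_{i-j}E_j$ in the annulus skein, so that (since encircling by a product is the composition of the two encirclings) the eigenvalue of $e_{H_i}$ becomes $\sum_{j}(-1)^{j-1}\tau_{n,j}\,\sigma_{n,i-j}$. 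The point is that $\tau_{n,j}$ from Lemma~\ref{lemma:tau} has only \emph{two} terms, with all $n$-dependence carried by the single factor $\eta_{n,0}:=\{n\}\{n-1;a\}$; the inductive hypothesis supplies each $\sigma_{n,i-j}$ already in the form $\sum_k c_k\theta_{n,k}$, and the elementary identities $\eta_{n,0}=\eta_{n,k}+\eta_{k,0}$ and $\theta_{n,k-1}\eta_{n,k-1}=\theta_{n,k}$ then reorganise the result into the claimed coefficients $\frac{\{2i-j;a\}_{j-1}}{\{j-1\}!}$. This detour through the antisymmetrizer eigenvalues is the idea that makes the algebra tractable.

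Your direct route is not wrong in principle, but the step you yourself flag as ``the main obstacle'' is essentially the entire proof, and the sketch you give does not close it. The assertion that the inner summation ``is of $q$-Chu--Vandermonde type'' is not substantiated: the exponent $\epsilon_{j,k}$ couples $j,k,n$ nonlinearly, and the $a$-dependence on the two sides of $(\star)$ is packaged very differently (through $\{i-1;a\}_{i-j-k}$ and the powers $a^{\pm(k-j)}$ on the left, versus the products $\{i-1+j;a\}_{i-j}\{n+j-2;a\}_j$ on the right). The product identity you borrow from the proof of Lemma~\ref{lemma:D_{m,n}} is a useful building block, but by itself it does not telescope the double sum into a single sum over $j$. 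If you push your approach further you will likely be led to the paper's shortcut: trade the complicated $\sigma$-data for the two-term $\tau$-data via the determinant formula.
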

%%\begin{align*}
%%\theta_{n,i} = \{ n \}_{-i} \{ n-1;a  \}_{i} =  \{ n \}_{i} \{ n+i-2;a  \}_{i}  .
%%\end{align*}

\begin{proof}
By the defition of $R_n$, each $H_i$ is expressed by
\begin{align*}
H_i  =  \sum_{j=0}^{i} \frac{ \{ i-1+j;a \}_{i-j} }{ \{ i-j\}! } R_j.  %\label{eq:H_i}.
\end{align*}
To show the statement, it is enough to prove that the map $e_{H_i}(D_{n,n})$
has the eigenvalue %induces a scalar multiple %is given by
\begin{align*}
 \sum_{j=0}^{i} \frac{ \{ i-1+j;a \}_{i-j} }{ \{ i-j\}!} \theta_{n,_j}.
\end{align*}
%of $D_{n,n}$. 
We will show it by induction.
The statement holds  for $i=0, 1, 2$ by the definition of $H_0$ and Lemma \ref{lemma:sigma}.
To use  the  hypothesis of  induction, 
we express $H_i$ by $H_j$ for $j=0,\ldots,i-1$.
By the determinant formula of Proposition 4 in \cite{Kawagoe1}, we have
\begin{align*}
H_i &= \sum_{j=1}^i (-1)^{j-1} H_{i-j} E_{j} \\
&= H_{i-1} E_1 - H_{i-2} E_2 + H_{i-2} E_2 - \cdots +(-1)^{i-1} E_i.
\end{align*}
We apply the  hypothesis  of induction to $H_j (j=0,\ldots,i-1)$ and Lemma \ref{lemma:tau} to 
$E_j (j=1, \ldots, i)$.
%First we consider the coefficient of $\{n \}_{-i} \{n-1;a\}_{i}$, which comes from
%We determine the action of $H_i$ and show that the right hand side of (\ref{eq:H_i}). 
%\begin{align*}
% &( \{ n \}_{-i+1} \{ n-1;a \}_{i-1} ) (\frac{  \{ 0;a \} }{ \{1\}  }  + \{ n \} \{ n-1;a \} ) \\
%= \; 
%&( \{ n \}_{-i+1} \{ n-1;a \}_{i-1} ) ( \{ n-i+1 \} \{ n+i-2;a \} + \{ i-1 \} \{ i-2;a \} )
%\end{align*}
To make the proof easy to read and avoid long equations, we set $\eta_{n,k}$ by 
\begin{align*}
\eta_{n,k} = \{ n - k \} \{ n + k -1;a \}. 
\end{align*}
Then, the eigenvalue induced by  $e_{H_i}$ is given by
\begin{align}
&\Bigl( \sum_{k=0}^{i-1} \frac{ \{ i-2+k;a \}_{i-1-k} }{ \{ i-1-k\}! } \theta_{n,k} \Bigr) 
 \Bigl( \eta_{n,0} + \frac{\{ 0;a \}}{\{ 1 \}}   \Bigr)  \label{eq:scalar1}\\
- 
&\Bigl( \sum_{k=0}^{i-2} \frac{ \{ i-3+k;a \}_{i-2-k} }{ \{ i-2-k\}!  } \theta_{n,k} \Bigr) 
 \Bigl( \frac{\{ -2;a \}}{\{ 1 \}} \eta_{n,0} + \frac{\{ -1;a \}_{2}}{\{ 2 \}! }   \Bigr)  \label{eq:scalar2} \\
+
&\Bigl( \sum_{k=0}^{i-3} \frac{ \{ i-4+k;a \}_{i-3-k} }{ \{ i-3-k\}!  } \theta_{n,k}  \Bigr) 
 \Bigl( \frac{\{ -3;a \}_{2}}{\{ 2 \}! }  \eta_{n,0} + \frac{\{ -2;a \}_{3}}{\{ 3 \}! }   \Bigr) \label{eq:scalar3} \\
%& \quad\vdots  \\
%+
%&(-1)^{j-1}
%\Bigl( \sum_{k=0}^{i-j} \frac{ \{ (i-j)-1+k;a \}_{(i-j)-k} }{ \{ (i-j)-k\}!} \theta_{n,k}  \Bigr)  
% \Bigl( \frac{\{ -j;a \}_{j-1}}{\{ j-1 \}! }  \eta_{n,0} + \frac{\{ -j+1;a \}_{j}}{\{ j \}! }   \Bigr)  \\
& \quad\vdots \notag  \\
+
&(-1)^{i-1} \Bigl( \frac{\{ -i;a \}_{i-1}}{\{ i-1 \}! } \eta_{n,0}+ \frac{\{ -i+1;a \}_{i}}{\{ i \}! }   \Bigr). \notag
\end{align}
%\begin{align}
%&\Bigl( \sum_{k=0}^{i-1} \frac{ \{ (i-1)-1+k;a \}_{(i-1)-k} }{ \{ (i-1)-k\}! } \theta_{n,k} \Bigr) \notag \\
%&\qquad \times \Bigl( \{ n \} \{ n-1;a \} + \frac{\{ 0;a \}}{\{ 1 \}}   \Bigr)  \label{eq:E_{1}}\\
%- 
%&\Bigl( \sum_{k=0}^{i-2} \frac{ \{ (i-2)-1+k;a \}_{(i-2)-k} }{ \{ (i-2)-k\}!  } \theta_{n,k} \Bigr) \notag \\
%&\qquad \times \Bigl( \frac{\{ -2;a \}}{\{ 1 \}}  \{ n \} \{ n-1;a \} + \frac{\{ -1;a \}_{2}}{\{ 2 \}! }   \Bigr)  \label{eq:E_{2}} \\
%+
%&\Bigl( \sum_{k=0}^{i-3} \frac{ \{ (i-3)-1+k;a \}_{(i-3)-k} }{ \{ (i-3)-k\}!  } \theta_{n,k}  \Bigr) \notag \\
%&\qquad \times \Bigl( \frac{\{ -3;a \}_{2}}{\{ 2 \}! }  \{ n \} \{ n-1;a \} + \frac{\{ -2;a \}_{3}}{\{ 3 \}! }   \label{eq:E_{3}} \Bigr) \\
%& \quad\vdots \notag \\
%+
%&(-1)^{j-1}
%\Bigl( \sum_{k=0}^{i-j} \frac{ \{ (i-j)-1+k;a \}_{(i-j)-k} }{ \{ (i-j)-k\}!} \theta_{n,k}  \Bigr)  \notag \\
%&\qquad
%\times \Bigl( \frac{\{ -j;a \}_{j-1}}{\{ j-1 \}! }  \{ n \} \{ n-1;a \} + \frac{\{ -j+1;a \}_{j}}{\{ j \}! }   \Bigr) \notag \\
%%+(-1)^{i-j-1}
%%&\Bigl( \sum_{k=0}^{j} \frac{ \{ (i-1)-1+k;a \}_{-(i-1)+k} }{ \{ (i-1)-k\}_{-(i-1)+k}} \theta_{n,k}  \Bigr) \\
%%&\qquad
%%\times \Bigl( \frac{\{ -(i-j);a \}_{(i-j)-1}}{\{(i- j)-1 \}! }  \{ n \} \{ n-1;a \} + \frac{\{ -(i-j)+1;a \}_{i-j}}{\{ i-j \}! }   \Bigr) \\
%& \quad\vdots \notag \\
%+ &(-1)^{i-1} \times 1 \times \Bigl( \frac{\{ -i;a \}_{i-1}}{\{ i-1 \}! }  \{ n \} \{ n-1;a \} + \frac{\{ -i+1;a \}_{i}}{\{ i \}! }   \Bigr). \notag
%\end{align}
To explain more simply, we first demonstrate 
how $\theta_{n,i}, \theta_{n,i-1}.\theta_{n,i-2}$ and their coefficients are obtained.
We remark that $\eta_{n,0}$ satisfies the following identities:
\begin{align*}
\eta_{n,0} = \eta_{n,i-1} + \eta_{i-1,0} =   \eta_{n,i-2} + \eta_{i-2,0} =  \eta_{n,i-3} + \eta_{i-3,0}.  
%\{ n \} \{ n-1;a \} 
%&= \{ n-i+1 \}  \{ n+i -2;a  \}+ \{ i-1 \}  \{ i -2;a \},   \\
%&= \{ n-i+2 \}  \{ n+i -3;a  \} + \{ i-2 \}  \{ i -3;a \},    \\
%&= \{ n-i+3 \}  \{ n+i -4;a  \} + \{ i-3 \}  \{ i -4;a \},    
\end{align*}
We apply these identities to the first three terms  (\ref{eq:scalar1}), (\ref{eq:scalar2}), (\ref{eq:scalar3}).
Then, the eigenvalue is described by
\begin{align*}
&\Biggl(\theta_{n,i-1} 
 \Bigl( \eta_{n,i-1}+ \eta_{i-1,0}  + \frac{\{ 0;a \}}{\{ 1 \}}   \Bigr) \\
&\;\; + \frac{ \{2i-4;a \}  }{ \{1\}   }\theta_{n,i-2} 
\Bigl(  \eta_{n,i-2}  + \eta_{i-2,0}  + \frac{\{ 0;a \}}{\{ 1 \}}   \Bigr) \\
&\;\; + \frac{ \{2i-5;a \}_{2}  }{ \{2\}!   }\theta_{n,i-3} 
\Bigl(  \eta_{n,i-3}  +\eta_{i-3,0}  + \frac{\{ 0;a \}}{\{ 1 \}}   \Bigr) \\
&\;\; + \Bigl( \sum_{k=0}^{i-4} \frac{ \{ i-2+k;a \}_{i-1-k} }{ \{ (i-1)-k\}!} \theta_{n,k} \Bigr) 
 \Bigl(  \eta_{n,0} + \frac{\{ 0;a \}}{\{ 1 \}}   \Bigr) \Biggr)   \\
- 
&\Biggl( \theta_{n,i-2} 
\Bigl( \frac{ \{-2;a \}  }{ \{1\} }  (\eta_{n,i-2}  +\eta_{i-2,0})  + \frac{\{ -1;a \}_{2}}{\{ 2 \}!}   \Bigr)  \\
&\;\; + 
\frac{ \{2i-6;a \}  }{ \{1\}   }\theta_{n,i-3} 
\Bigl( \frac{ \{-2;a \}  }{ \{1\}   }  (  \eta_{n,i-3}  +\eta_{i-3,0}) 
 + \frac{\{ -1;a \}_{2}}{\{ 2 \}!}   \Bigr)   \\
&\;\; +\Bigl( \sum_{k=0}^{i-4} \frac{ \{ i-3+k;a \}_{i-2-k} }{ \{ i-2-k\}!} \theta_{n,k} \Bigr) 
 \Bigl( \frac{\{ -2;a \}}{\{ 1 \}} \eta_{n,0}  + \frac{\{ -1;a \}_{2}}{\{ 2 \}! } \Bigr) \Biggr)  \\
+&\Biggl(\theta_{n,i-3} 
 \Bigl( \frac{ \{-3;a \}_{2}  }{ \{2\}!   }  (  \eta_{n,i-3}  +\eta_{i-3,0})  + \frac{\{ -2;a \}_{3}}{\{ 3 \}!}   \Bigr)  \\
&\;\;+
\Bigl( \sum_{k=0}^{i-4} \frac{ \{ i-4+k;a \}_{i-3-k} }{ \{ i-3-k\}!} \theta_{n,k}  \Bigr) 
\Bigl( \frac{\{ -3;a \}_{2}}{\{ 2 \}! }   \eta_{n,0} + \frac{\{ -2;a \}_{3}}{\{ 3 \}! }   \Bigr) \Biggr)+ \cdots. 
%&\vdots 
\end{align*}
Using the following identity:
\begin{align*}
\theta_{n,k-1} \eta_{n,k-1}  = \theta_{n,k} \quad   (k=1, \ldots),
\end{align*}
%$\theta_{n,i}$ comes from (\ref{eq:theta_{1}}), 
%and $\theta_{n,i-1}$ comes from  (\ref{eq:theta_{1}}),  (\ref{eq:theta_{2}}),  (\ref{eq:theta_{5}}),
%and  $\theta_{n,i-2}$ comes from   (\ref{eq:theta_{3}}),  (\ref{eq:theta_{4}}),
% (\ref{eq:theta_{6}}),  (\ref{eq:theta_{7}}),  (\ref{eq:theta_{8}}).
the eigenvalue with respect to $\theta_{n,i}, \theta_{n,i-1}, \theta_{n,i-2}$  is expressed by
\begin{align*}
&\theta_{n,i} + 
\Bigl(
 \eta_{i-1,0}  + \frac{\{ 0;a \}}{\{ 1 \}} + \frac{ \{2i-4; a \}  }{ \{1\}   }
-\frac{ \{-2;a \}  }{ \{1\}   } 
 \Bigr) 
\theta_{n,i-1} \\
&+
\Bigl(
 \frac{ \{2i-4; a \}  }{ \{1\}}  (     \eta_{i-2,0}  + \frac{\{ 0;a \}}{\{ 1 \}}            )
+ \frac{ \{2i-5;a \}_{2}  }{ \{2\}!   } -   \frac{ \{-2;a \}  }{ \{1\}   }  \eta_{i-2,0} \\
& - \frac{\{ -1;a \}_{2}}{\{ 2 \}!}  -    \frac{ \{2i-6;a \}  }{ \{1\}   } \frac{ \{-2;a \}  }{ \{1\}   }    
+ \frac{ \{-3;a \}_{2}  }{ \{2\}!   }  \Bigr) \theta_{n,i-2} + \cdots \\
=\;&\theta_{n,i} + \frac{\{ 2i-2;a \}}{\{ 1 \}} \theta_{n,i-1}    
+\frac{\{ 2i-3;a \}_{2}}{\{ 2 \}!} \theta_{n,i-2} + \cdots.
\end{align*}
%Then, $\theta_{n,i}$ comes from the multiplication of $\theta_{n,i-1}$ in the summation at $k=i-1$ 
%and $\{ n-i+1 \}  \{ n+i -2;a  \}$ of ($\ref{eq:s1}$). Its coefficient is $1$.
%Next, $\theta_{n,i-1}$ comes from 
%the multiplication of $\{ 2i -4 ; a\}/\{ 1 \} \theta_{n,i-2}$ in the summation at $k=i-2$ and 
% and $\{ 0;a \}/{\{ 1 \}}$ of  ($\ref{eq:s1}$) and $\{ i-1 \}  \{ i -2;a \}$  , 
%We consider the coefficient $\theta_{n,i-1}$
%\begin{align*} 
%&\{ i-1 \} \{ i-2;a \} + \frac{\{ 0;a \}}{\{ 1 \}} + \frac{\{ 2i-4;a \}}{\{ 1 \}} -\frac{\{ -2;a \}}{\{ 1 \}} \\
%=
%&\frac{\{ 2i-2;a \}}{\{ 1 \}}
%\end{align*}

Next, we consider a general situation. 
The first term  (\ref{eq:scalar1}) generates %contributes 
$\theta_{n,i}, \theta_{n,i-1}, \ldots, \theta_{n,0}$.
The second term  (\ref{eq:scalar2}) generates $\theta_{n,i-1}, \theta_{n,i-2}, \ldots, \theta_{n,0}$,
and the third term  (\ref{eq:scalar3}) generates $\theta_{n,i-2}, \theta_{n,i-3}, \ldots, \theta_{n,0}$.
Furthermore, this demonstration implies that 
the $j$-th term generates  $\theta_{n,i-j+1}, \theta_{n,i-j}, \ldots, \theta_{n,0}$.
Therefore,  $\theta_{n,i-j+1}$ comes from
the first, the second, ..., the $j$-th terms. %of the equation of the action of $H_i$.
By applying the identity
\begin{align*}
\eta_{n,0} = \eta_{n,i} + \eta_{i,0} \quad (i=1,\ldots),
%\{ n \} \{ n-1; a \}  = \{ n-i \}  \{ n+i -1;a  \}+ \{ i \}  \{ i -1;a \}, 
\end{align*}
we express  the $l$-th term  as follows: %is expressed by
\begin{align*}
&(-1)^{l-1}
\Bigl( \sum_{k=0}^{i-l} \frac{ \{ i-l-1+k;a \}_{i-l-k} }{ \{ i-l-k\}! } \theta_{n,k}  \Bigr)  
\Bigl( \frac{\{ -l;a \}_{l-1}}{\{ l-1 \}! }  \eta_{n,0}+ \frac{\{ -l+1;a \}_{l}}{\{ l \}! }   \Bigr)  \\
= 
&(-1)^{l-1}  \sum_{k=0}^{i-l} 
\Bigl(
\frac{ \{ i-l-1+k;a \}_{i-l-k} }{ \{ i-l-k\}!  } \theta_{n,k}
\Bigr) %\\
%&\quad \times 
\Biggl(
 \frac{\{ -l;a \}_{l-1}}{\{ l-1 \}! } \Bigl( \eta_{n,k} + \eta_{k,0}    \Bigr)
+ \frac{\{ -l+1;a \}_{l}}{\{ l \}! } 
\Biggr) \\
=
&(-1)^{l-1}  \sum_{k=0}^{i-l} 
\Biggl(
\frac{ \{ i-l-1+k;a \}_{i-l-k} }{ \{ i-l-k\}!  } \frac{\{ -l;a \}_{l-1}}{\{ l-1 \}! }  \theta_{n,k+1} \\
&\quad
+\frac{ \{ i-l-1+k;a \}_{i-l-k} }{ \{ i-l-k\}!  }
\Bigl(
 \frac{\{ -l;a \}_{l-1}}{\{ l-1 \}! }  \eta_{k,0}  +   \frac{\{ -l+1;a \}_{l}}{\{ l \}! } 
\Bigr)
 \theta_{n,k}
\Biggr)
\end{align*}
We consider  the contribution of the coefficient of $\theta_{n, i-j+1}$ for $l=1,\ldots,j$.
In the upper term including $\theta_{n,k+1}$, the contribution yields when $k=i-j$.
In the lower term including $\theta_{n,k}$,   the contribution yields when $k=i-j+1$.
%$k=i-j, i-j+1$ when $l=1,\ldots, j-1$, and $k=i-j$ when $l=j$. 
Hence, the coefficient of $\theta_{n, i-j+1}$ is described by
%For this reason,
\begin{align*}
&\sum_{l=1}^{j} (-1)^{l-1} \frac{  \{2i-j-l-1;a\}_{j-l}  }{  \{j-l\}!  } 
\frac{  \{-l;a\}_{l-1}  }{  \{l-1\}!  } \\
+&\sum_{l=1}^{j-1} (-1)^{l-1} \frac{  \{2i-j-l;a\}_{j-l-1}  }{  \{j-l-1\}!  }
\Bigl(  \frac{  \{-l;a\}_{l-1}  }{  \{l-1\}!  }  \eta_{i-j+1,0} 
+  \frac{  \{ -l+1;a \}_{l}  }{  \{ l \}!  } \Bigr).  
\end{align*}
%For $l=2,\ldots,j-1$, the $l$ th term divided by $(-1)^{l-1}$ is 
%
Furthermore, we rearrange the following equation: 
\begin{align*}
&\frac{  \{ 2i - j -l -1;a\}_{j-l}  }{ \{ j-l  \}!   }  \frac{  \{ -l;a\}_{l-1}  }{ \{ l-1 \}!   } \\
+\;
&\frac{  \{ 2i - j -l ;a\}_{j-l-1}  }{ \{ j-l-1 \}!   } 
\Bigl(
 \frac{  \{ -l;a\}_{l-1}  }{ \{ l-1 \}!} \eta_{i-j+1,0}  +  \frac{  \{ -l+1;a\}_{l}  }{ \{ l \}!} 
\Bigr) 
\end{align*}
Using the identity
\begin{align*}
\eta_{n,0} = \{ n \} \{ n-1;a \} = \frac{  \{ 2n;a \}  }{  \{1 \}  }  -  \frac{  \{ 2(n-1);a \}  }{  \{1 \}  }
+  \frac{  \{ -2;a \}  }{  \{1 \}  } - \frac{  \{ 0;a \}  }{  \{1 \}  },
\end{align*}
%the $l$ th term 
it is expressed by
\begin{align*}
&\frac{  \{ 2i - j -l ;a\}_{j-l-1}  }{ \{ j-l-1 \}!   }  \frac{  \{ -l;a\}_{l-1}  }{ \{ l-1 \}!} \\
&\qquad \times \Bigl( \frac{ \{ 2(i- j+1);a \} }{ \{ 1 \} } -  \frac{ \{ 2( i - j);a \} }{ \{ 1 \} }
+ \frac{ \{ -2;a \} }{ \{ 1 \} } -  \frac{ \{ 0;a \} }{ \{ 1 \} }  \Bigr) \\
&\quad 
+ 
\frac{  \{ 2i - j -l -1;a\}_{j-l}  }{ \{ j-l  \}!   }  \frac{  \{ -l;a\}_{l-1}  }{ \{ l-1 \}!   } 
+\frac{  \{ 2i - j -l ;a\}_{j-l-1}  }{ \{ j-l-1 \}!   }  \frac{  \{ -l+1;a\}_{l}  }{ \{ l \}!} \\
= &\Biggl( \frac{  \{ 2i - j -l ;a\}_{j-l-1}  }{ \{ j-l-1 \}!   }  \frac{  \{ -l;a\}_{l-1}  }{ \{ l-1 \}!}
 \Bigl( \frac{ \{ 2(i- j+1);a \} }{ \{ 1 \} } -  \frac{ \{ 2( i - j);a \} }{ \{ 1 \} } \Bigr) \\
&\quad +\frac{  \{ 2i - j -l -1;a\}_{j-l}  }{ \{ j-l \}!   }  \frac{  \{ -l;a\}_{l-1}  }{ \{ l-1 \}!}  \Biggr) \\
& +\Biggl(\frac{  \{ 2i - j -l ;a\}_{j-l-1}  }{ \{ j-l-1 \}!   }  \frac{  \{ -l;a\}_{l-1}  }{ \{ l-1 \}!} 
\Bigl( \frac{ \{ -2;a \} }{ \{ 1 \} } -  \frac{ \{ 0;a \} }{ \{ 1 \} }  \Bigr)  \\ 
&\qquad + \frac{  \{ 2i - j -l ;a\}_{j-l-1}  }{ \{ j-l-1 \}!   }  \frac{  \{ -l+1;a\}_{l}  }{ \{ l \}!} \Biggr) \\
%
%\end{align*}
%\begin{align*}
=  &\frac{  \{ 2i - j -l -1;a\}_{j-l-2}  }{ \{ j-l \}!   }  \frac{  \{ -l;a\}_{l-1}  }{ \{ l-1 \}!} \\
&\quad \times \Biggl( 
\{ j -l \} \{ 2i-j-l;a \} \Bigl( \frac{\{ 2(i -j +1); a \} }{ \{ 1 \} } 
-  \frac{  \{ 2(i -j);a \} }{  \{ 1 \}  } \Bigr) \\
&\qquad + \{ 2(i-j)+1;a \} \{ 2(i -j);a \} 
\Biggr) \\
&+ \frac{  \{ 2i - j -l ;a\}_{j-l-1}  }{ \{ j-l-1 \}!   }  \frac{  \{ -l+1;a\}_{l-2}  }{ \{ l \}!} \\
&\quad \times \Biggl( \{ l \} \{ -l;a \} \Bigl(\frac{ \{ -2;a \} }{ \{ 1 \} } -  \frac{ \{ 0;a \} }{ \{ 1 \} }\Bigr)   
 + \{  -1;a\} \{ 0;a\}   \Biggr) \\
= \; &\frac{  \{ 2i - j -l -1;a\}_{j-l-2}  }{ \{ j-l \}!   }  \frac{  \{ -l;a\}_{l-1}  }{ \{ l-1 \}!} \\
&\quad \times \Bigl( 
\{ 2i-j-l+1;a \} \{ 2i-j-l;a \} +\{ j -l \} \{ j-l-1 \} \Bigr) \\
&+ \frac{  \{ 2i - j -l ;a\}_{j-l-1}  }{ \{ j-l-1 \}!   }  \frac{  \{ -l+1;a\}_{l-2}  }{ \{ l \}!} 
 \Bigl( \{ -l-1;a \} \{ -l;a \} +   \{ l \}  \{ l-1 \} \Bigr).
\end{align*}
Finally, it is given by
\begin{align*}
&\frac{  \{ 2i - j -l +1;a\}_{j-l}  }{ \{ j-l \}!   }  \frac{  \{ -l;a\}_{l-1}  }{ \{ l-1 \}!   }
+ \frac{  \{ 2i - j -l -1;a\}_{j-l-2}  }{ \{ j-l -2 \}!   }  \frac{  \{ -l;a\}_{l-1}  }{ \{ l-1 \}!   } \\
&\quad +\frac{  \{ 2i - j -l ;a\}_{j-l-1}  }{ \{ j-l-1 \}!   }  \frac{  \{ -l-1;a\}_{l}  }{ \{ l \}!   }
+\frac{  \{ 2i - j -l;a\}_{j-l-1}  }{ \{ j-l-1 \}!   }  \frac{  \{ -l+1;a\}_{l-2}  }{ \{ l-2 \}!   }.
\end{align*}
Here, we consider the term $\{ \quad; a \}_{i}$ as $0$ when $i$ is negative.    
Then, the above expression is valid for $l=1,\ldots,j$.
We sum up these terms from $l=1$ to $l=j$ with multiplication by $(-1)^{l-1}$. 
It is equal to
\begin{align*}
\frac{  \{ 2i - j ;a\}_{j-1}  }{ \{ j-1 \}!   }.
\end{align*}
This is  the coefficient of $\theta_{n, i-j+1}$. 
This completes the proof.
\end{proof}
Since $\{ n \}_i =0$ for $i >n$,  we have the following corollary.
\begin{corollary}\label{cor:vanish}
For an integer $i > n$, we have $e_{R_i}(D_{n,n})=0$.
%$\theta_{n,i} = \{ n \}_{i} \{ n+i-2;a  \}_{i}=0$.
\end{corollary}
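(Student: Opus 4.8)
The plan is to deduce the corollary directly from the Proposition, which gives $e_{R_i}(D_{n,n}) = \theta_{n,i} D_{n,n}$ with the explicit eigenvalue $\theta_{n,i} = \{n\}_i \{n+i-2;a\}_i$. Since $D_{n,n}$ is a fixed element of $\mathcal{S}$, it suffices to show that the scalar $\theta_{n,i}$ vanishes whenever $i > n$; the whole statement then reduces to a short computation with the $q$-symbols, with no further skein-theoretic input.

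First I would unpack the factor $\{n\}_i$ according to its definition,
\begin{align*}
\{n\}_i = \{n\}\{n-1\}\cdots\{n-i+1\} = \prod_{k=0}^{i-1}\{n-k\},
\end{align*}
a product of $i$ consecutive symbols of the form $\{m\} = q^{m} - q^{-m}$. The key observation is that $\{0\} = q^{0} - q^{-0} = 1 - 1 = 0$ identically, independently of the value of $q$.

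Next I would locate this vanishing factor inside the product. The factor $\{n-k\}$ equals $\{0\}$ precisely when $k = n$, and the index $k$ ranges over $0 \le k \le i-1$. Hence $k = n$ occurs among the factors exactly when $n \le i-1$, that is, when $i > n$. In that case $\{n\}_i$ contains $\{0\} = 0$ as one of its factors and therefore vanishes, so $\theta_{n,i} = \{n\}_i \{n+i-2;a\}_i = 0$ and consequently $e_{R_i}(D_{n,n}) = \theta_{n,i} D_{n,n} = 0$.

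There is essentially no obstacle here: the corollary is an immediate consequence of the closed form for $\theta_{n,i}$ established in the Proposition, and the only thing to verify is the index bookkeeping showing that the factor $\{0\}$ enters the product $\{n\}_i$ exactly in the range $i > n$. All of the substantive work has already been carried out in computing the eigenvalue $\theta_{n,i}$; the present statement merely records the range in which that eigenvalue is zero, which is precisely why it is stated as a corollary rather than a separate result.
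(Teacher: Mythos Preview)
Your proposal is correct and is exactly the paper's argument: the paper simply notes ``Since $\{n\}_i = 0$ for $i > n$'' immediately before stating the corollary, and you have spelled out why that vanishing holds by locating the factor $\{0\}$ in the product. There is nothing to add.
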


\subsection{The eigenvalue  for the encircling map of  $\omega_n $}
In the last subsection, we discuss a relation 
between the encircling map $e$ and the twist maps $t$.
That is, for any $x \in \mathcal{H}_{n,n}$, 
we would like to find an element $\omega_n=\omega_n^+ \in \mathcal{S}$
such that $e_{\omega_n}(x) = t(x)$ for the positive twist map $t$.
In fact, $\omega_n$ is given by the following definition.
\begin{definition}
For an integer $n \geq 0$, we define  $\omega_n \in \mathcal{S}$ by
\begin{align*}
\omega_n = \sum_{i=0}^{n} t_{i} R_i,
\quad
{\text{where}} \;
t_i = \frac{a^i q^{\frac{i(i-1)}{2}}}{ \{ i \}!}.
\end{align*}
\end{definition}
Since $\{ D_{i,i} \; | \; i = 0, \ldots, n \}$ is a basis of $\mathcal{H}_{n,n}$
and $t(D_{i,i}) = a^{2i}q^{2i(i-1)}D_{i,i}$,
%for any integer $i \leq n$, 
if we would like to show  $e_{\omega_n}(x) = t(x)$,  it is enough to show that
\begin{align*}
e_{\omega_{n}}(D_{i,i}) = a^{2i}q^{2i(i-1)}D_{i,i}, \quad (i=0,\ldots,n).
\end{align*}
However, by Corollary \ref{cor:vanish}, it is sufficient to show that
\begin{align*}
e_{\omega_{i}}(D_{i,i}) = a^{2i}q^{2i(i-1)}D_{i,i}, \quad(i=0,1,\ldots).
\end{align*}
Since the map $e_{\omega_n}$ induces the eigenvalue
\begin{align*}
\sum_{i=0}^n \frac{a^i q^{\frac{i(i-1)}{2}}}{ \{ i \}!} \theta_{n,i} %\times \{ n \}_{-i} \{ n-1;a \}_{i}
= \sum_{i=0}^n a^{i} q^{\frac{i(i-1)}{2}} 
\left[ \begin{array}{@{\,}c@{\,}}n \\ i \end{array} \right] 
\{n+i  -2 ; a \}_{i},
\end{align*}
we will prove the following statement.
\begin{proposition}
The following identity holds:
\begin{align*} 
a^{2n}q^{2n(n-1)} 
= & \sum_{i=0}^n a^{i} q^{\frac{i(i-1)}{2}} 
\left[ \begin{array}{@{\,}c@{\,}}n \\ i \end{array} \right] 
\{n+i-2; a \}_{i}.  % \\
%&= \sum_{i=0}^n a^{i} q^{\frac{i(i-1)}{2}} 
%\left[ \begin{array}{@{\,}c@{\,}}n \\ i \end{array} \right] 
%\{n-1; a \} \cdots \{n+i-2 ; a \} 
%&=a^{n} q^{\frac{n(n-1)}{2}} \{n\} \cdots \{1\}  \{n-1; a\} \cdots \{2n-2 ; a \} 
\end{align*}
\end{proposition}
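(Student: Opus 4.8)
The plan is to remove the dependence on $a$ by a single substitution, reducing the claim to a clean monomial‑expansion identity that follows from the terminating $q$‑binomial theorem. Set $Q=q^{2}$ and $Y=a^{2}q^{2(n-1)}$, write $(Y;Q)_{i}=\prod_{k=0}^{i-1}(1-YQ^{k})$ for the usual $q$‑Pochhammer symbol, and write $\binom{n}{i}_{Q}$ for the usual base‑$Q$ Gaussian binomial (to be distinguished from the symmetric one $\left[\begin{smallmatrix}n\\i\end{smallmatrix}\right]$ used in the paper). First I would factor the product $\{n+i-2;a\}_{i}=\prod_{m=n-1}^{n+i-2}(aq^{m}-a^{-1}q^{-m})$: pulling $a^{-1}q^{-m}$ out of each factor and using $a^{2}q^{2m}-1=YQ^{m-(n-1)}-1$ gives
\[
\{n+i-2;a\}_{i}=(-1)^{i}\,a^{-i}\,q^{-\frac{i(2n+i-3)}{2}}\,(Y;Q)_{i}.
\]
Combined with the change of base $\left[\begin{smallmatrix}n\\i\end{smallmatrix}\right]=q^{-i(n-i)}\binom{n}{i}_{Q}$ between the symmetric $q$‑integers and base $Q$, the $i$‑th summand collapses to $(-1)^{i}Q^{\binom{i}{2}-i(n-1)}\binom{n}{i}_{Q}(Y;Q)_{i}$, while the left‑hand side is exactly $a^{2n}q^{2n(n-1)}=Y^{n}$. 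Hence the Proposition is equivalent to
\[
Y^{n}=\sum_{i=0}^{n}(-1)^{i}Q^{\binom{i}{2}-i(n-1)}\binom{n}{i}_{Q}(Y;Q)_{i}.
\]

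Next I would prove this reduced identity by expanding each Pochhammer symbol with Rothe's formula (the terminating $q$‑binomial theorem) $(Y;Q)_{i}=\sum_{j=0}^{i}(-1)^{j}Q^{\binom{j}{2}}\binom{i}{j}_{Q}Y^{j}$, substituting, and interchanging the order of summation so that $Y$ enters only through the monomials $Y^{j}$. Using the subset identity $\binom{n}{i}_{Q}\binom{i}{j}_{Q}=\binom{n}{j}_{Q}\binom{n-j}{i-j}_{Q}$ and setting $i=j+s$, $m=n-j$, the coefficient of $Y^{j}$ factors as a nonzero $s$‑independent quantity $Q^{2\binom{j}{2}-j(n-1)}\binom{n}{j}_{Q}$ times the inner sum $\sum_{s=0}^{m}(-1)^{s}Q^{\binom{s}{2}}\binom{m}{s}_{Q}(Q^{1-m})^{s}$. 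Applying Rothe's formula a second time, this inner sum equals $(Q^{1-m};Q)_{m}=\prod_{k=0}^{m-1}(1-Q^{1-m+k})$.

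The decisive point is that this product vanishes whenever $m=n-j\ge 1$, because the factor indexed by $k=m-1$ is $1-Q^{0}=0$; thus every coefficient with $j<n$ dies. Only $j=n$ (i.e.\ $m=0$, forcing $i=n$) survives, and the short exponent check $2\binom{n}{2}-n(n-1)=0$ shows its coefficient is $1$. This produces $Y^{n}$ on the right, establishing the reduced identity and hence the Proposition.

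I expect the $q$‑power bookkeeping to be the only genuine obstacle: the exponents $\tfrac{i(i-1)}{2}$, $\tfrac{i(2n+i-3)}{2}$, $i(n-i)$ and $\binom{i}{2}-i(n-1)$ must be tracked carefully through the factorization and the conversion from the symmetric $q$‑integers to base $Q=q^{2}$, and the $s$‑independent prefactor pulled out of the inner sum must be checked to be nonzero so that the vanishing of $(Q^{1-m};Q)_{m}$ really kills the coefficient. The conceptual content is light by comparison: everything rests on applying the terminating $q$‑binomial theorem twice, the second time at the argument $z=Q^{1-m}$, where it is forced to hit a root.
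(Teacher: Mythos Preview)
Your argument is correct: the substitution $Y=a^{2}q^{2(n-1)}$, $Q=q^{2}$ cleanly strips away $a$, the exponent bookkeeping in the factorization of $\{n+i-2;a\}_{i}$ and in the conversion $\left[\begin{smallmatrix}n\\i\end{smallmatrix}\right]=q^{-i(n-i)}\binom{n}{i}_{Q}$ checks out, and the double application of the terminating $q$-binomial theorem produces exactly the vanishing you want, since $(Q^{1-m};Q)_{m}$ contains the factor $1-Q^{0}$ whenever $m\ge 1$.

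Your route is genuinely different from the paper's. The paper never leaves the $(a,q)$ world: it builds a Pascal-type triangular array $\lambda_{i,j}$ whose $(0,0)$ entry is $a^{2n}q^{2n(n-1)}$, shows inductively that each entry splits as $\lambda_{i,j}=a^{-1}q^{-(n+i-j-1)}\{n+i-j-1;a\}\lambda_{i,j}+a^{-2}q^{-2(n+i-j-1)}\lambda_{i,j}$ so that consecutive rows have equal sums, and then reads off the $n$-th row as the desired right-hand side. That argument is elementary and self-contained but requires tracking a two-parameter family of exponents through an induction. Your approach, by contrast, imports one standard tool (Rothe's formula) and finishes in essentially two lines once the reduction to $Y^{n}=\sum_{i}(-1)^{i}Q^{\binom{i}{2}-i(n-1)}\binom{n}{i}_{Q}(Y;Q)_{i}$ is made; the price is the up-front exponent computation for the change of variables, which you have carried out correctly. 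Either proof is fine; yours makes the identity recognizable as a disguised instance of a classical $q$-series evaluation, while the paper's makes the telescoping structure explicit.
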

\begin{proof}
We consider a triangular array like Pascal's triangle.
The $(0,0)$ entry is $a^{2n}q^{2n(n-1)}$.
First, we split $a^{2n}q^{2n(n-1)} $ into 
\begin{align*}
a^{2n}q^{2n(n-1)} 
& = a^{2n}q^{2n(n-1)} -a^{2n-2}q^{2(n-1)(n-1)}  +a^{2n-2}q^{2(n-1)(n-1)} \\ %\notag \\
& = a^{2n-1}q^{(2n-1)(n-1)} \{ n-1; a\}       + a^{2n-2}q^{2(n-1)(n-1)}. %\label{eq:s0}
\end{align*}
The first term of the last equation is the $(1,0)$ entry and the second term of it 
is the $(1,1)$ entry in the triangle.
Next, we split the $(1,0)$ and the $(0,1)$ entries. The $(1,0)$ entry splits into
\begin{align*}
&a^{2n-1}q^{(2n-1)(n-1)} \{ n-1; a\} \\
= \; &(a^{2n-1}q^{(2n-1)(n-1)} -  a^{2n-3}q^{2n(n-1) -3n +1}) \{ n-1; a\} \\
&\quad 
+  a^{2n-3}q^{2n(n-1) -3n +1} \{ n-1; a\} \\
= \; &a^{2n-2}q^{2n(n-1)-2n+1} \{ n-1; a\}  \{ n; a\} +   a^{2n-3}q^{2n(n-2)} \{ n-1; a\},   
\end{align*}
and  the $(1,1)$ entry splits into
\begin{align*}
&a^{2n-2}q^{2(n-1)(n-1)} \\
= \; &a^{2n-2}q^{2(n-1)(n-1)} -  a^{2n-4}q^{2(n-1)(n-2)} +  a^{2n-4}q^{2(n-1)(n-2)}\\
= \; &a^{2n-3}q^{(2n-3)(n-1)} \{ n-1; a\} + a^{2n-4}q^{2(n-1)(n-2)}.  
\end{align*}
Then, we have the following split as the second row:
\begin{align*}
a^{2n}q^{2n(n-1)} 
& = a^{2n-2}q^{2n(n-1)-2n+1} \{ n; a\}_2 \\
&\quad +a^{2n-3}q^{2n(n-1)-3n+2} \left[ \begin{array}{@{\,}c@{\,}}2 \\ 1 \end{array} \right] \{ n-1; a\} 
 + a^{2n-4}q^{2(n-1)(n-2)}.  
\end{align*}
The first, the second, the third term of the right-hand equation are the $(2,0), (2,1), (2,2)$ entries in the triagle.
Let $\lambda_{i,j}$ be  
\begin{align*}
\lambda_{i,j} &= %a^{2n-i-j}q^{2n(n-1) -n(i+j) - \frac{i(i-3)}{2} + \frac{j(j+1)}{2}}
a^{2n-i-j}q^{\kappa_{i,j}}
%q^{n(2n-i-j-2) - \frac{i(i-3)}{2} + \frac{j(j-3)}{2}}
 \left[ \begin{array}{@{\,}c@{\,}}i \\ j \end{array} \right] \{ n+i-j-2; a\}_{i-j}, \\
& \text{where $\displaystyle \kappa_{i,j}=2n(n-1) -n(i+j) - \frac{i(i-3)}{2} + \frac{j(j+1)}{2}$.}
\end{align*}
%where $\mu(i,j)=2n(n-1) -n(i+j) - {i(i-3)}/{2} + {j(j+1)}/{2}$.
We show that the $(i,j)$ entry in the triangle is given by $\lambda_{i,j}$ if we demonstrate a suitable split.
We assume that the $(i,j)$ entry in the triangle is expressed by $\lambda_{i,j}$.
Then, we split $\lambda_{i,j}$ into
\begin{align*}
\lambda_{i,j}  &= \lambda_{i,j} -a^{-2}q^{-2(n+i-j-1)}\lambda_{i,j} + a^{-2}q^{-2(n+i-j-1)}\lambda_{i,j} \\
&= a^{-1}q^{-(n+i-j-1)} \lambda_{i,j}  \{ n+i-j-1; a\} +  a^{-2}q^{-2(n+i-j-1)}\lambda_{i,j}.
\end{align*}
We remark that this split agrees with the first row $(\lambda_{1,0}, \lambda_{1,1})$, and
the second row $(\lambda_{2,0}, \lambda_{2,1}, \lambda_{2,2})$.
This split means that the first term moves into the $(i+1,j)$ entry and 
the second term moves into $(i+1,j+1)$ entry.
Here, we calculate the $(i+1,j+1)$ entry  derived from the $(i,j)$ and the  $(i,j+1)$ entries.
It is expressed by
\begin{align*}
& a^{-2}q^{-2(n+i-j-1)}\lambda_{i,j} +  a^{-1}q^{-(n+i-(j+1)-1)} \lambda_{i,j+1}  \{ n+i-(j+1)-1; a\} \\
=& a^{2n-i-j-2}q^{\kappa_{i,j}-2(n+i-j-1)}  \left[ \begin{array}{@{\,}c@{\,}}i \\ j \end{array} \right] \{ n+i-j-2; a\}_{i-j} \\
 & + a^{2n-i-j-2}q^{\kappa_{i,j+1}-2(n+i-j-2)}  \left[ \begin{array}{@{\,}c@{\,}}i \\ j+1 \end{array} \right] 
\{ n+i-j-3; a\}_{i-j-1} \{ n+i-j-2; a\} \\
=& a^{2n-i-j-2}\frac{[i]!}{[j+1]![i-j]!}  \{ n+i-j-2; a\}_{i-j} \\
&\quad \times \Bigl( q^{\kappa_{i,j} -2(n+i-j-1)}[j+1] +q^{\kappa_{i,j+1}-(n+i-j-2)}[i-j]\Bigr) \\
=& a^{2n-i-j-2}\frac{[i]!}{[j+1]![i-j]!}  \{ n+i-j-2; a\}_{i-j} \times q^{\kappa_{i+1,j+1}}[i+1].
\end{align*}
This agrees with $\lambda_{i+1,j+1}$. 
Therefore, when we take the sum of the $i$-th row, and we obtain
\begin{align*}
a^{2n}q^{2n(n-1)} 
= & \sum_{k=0}^i\lambda_{i,k}.
\end{align*}
Finally, put $i=n$. The assertion is obtained. 
\end{proof}
For the negative twist map $t^{-1}$, we  consider the mirror image of elements of the skeins module.
That is, we define $\omega_n^- \in \mathcal{S}$ by
\begin{align*}
\omega_n^- = \sum_{i=0}^{n} \bar{t}_{i} R_i,
\quad
{\text{where}} \;
\bar{t}_i = (-1)^i \frac{a^{-i} q^{\frac{-i(i-1)}{2}}}{ \{ i \}!}.
\end{align*}
Then, we have $e_{\omega_n^-}(D_{i,i}) = t^{-1}(D_{i,i})$ for $i=0,\ldots,n$.
Summarizing the above, we have the following proposition.
\begin{proposition}\label{proposition:omega}
For any $x \in \mathcal{H}_{n,n}$, we have $e_{\omega_n^+}(x)= t(x)$ and 
$e_{\omega_n^-}(x)= t^{-1}(x)$. 
\end{proposition}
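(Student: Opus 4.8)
The plan is to check both identities on the basis $\{ D_{i,i} \mid i=0,\ldots,n \}$ of $\mathcal{H}_{n,n}$ and then extend by linearity. Since $e_{\omega_n^+}$, $e_{\omega_n^-}$, $t$ and $t^{-1}$ are all $\mathbb{C}$-linear maps on $\mathcal{S}$, it suffices to verify $e_{\omega_n^+}(D_{j,j}) = t(D_{j,j})$ and $e_{\omega_n^-}(D_{j,j}) = t^{-1}(D_{j,j})$ for each $j = 0, \ldots, n$; agreement on a basis then forces agreement on all of $\mathcal{H}_{n,n}$.

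For the positive twist I would expand $\omega_n^+ = \sum_{i=0}^n t_i R_i$ and use linearity of the encircling map together with the eigenvalue $e_{R_i}(D_{j,j}) = \theta_{j,i} D_{j,j}$ from the preceding proposition to write $e_{\omega_n^+}(D_{j,j}) = \bigl( \sum_{i=0}^n t_i \theta_{j,i} \bigr) D_{j,j}$. The decisive simplification is Corollary \ref{cor:vanish}: for $i > j$ we have $e_{R_i}(D_{j,j}) = 0$, so the sum truncates at $i = j$ and the scalar depends only on $j$. Since $[n] = \{n\}/\{1\}$, the factors of $\{1\}$ cancel and $\{j\}_i/\{i\}! = \left[ \begin{array}{@{\,}c@{\,}}j \\ i \end{array} \right]$, whence each term becomes $t_i \theta_{j,i} = a^i q^{\frac{i(i-1)}{2}} \left[ \begin{array}{@{\,}c@{\,}}j \\ i \end{array} \right] \{ j+i-2;a \}_i$. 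Applying the identity just proved with $j$ in place of $n$ gives $\sum_{i=0}^j t_i \theta_{j,i} = a^{2j} q^{2j(j-1)}$, which is exactly the eigenvalue appearing in $t(D_{j,j}) = a^{2j} q^{2j(j-1)} D_{j,j}$. This settles the positive case.

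For the negative twist I would run the same computation with $\omega_n^- = \sum_{i=0}^n \bar{t}_i R_i$, obtaining $e_{\omega_n^-}(D_{j,j}) = \bigl( \sum_{i=0}^j \bar{t}_i \theta_{j,i} \bigr) D_{j,j}$ with $\bar{t}_i \theta_{j,i} = (-1)^i a^{-i} q^{-\frac{i(i-1)}{2}} \left[ \begin{array}{@{\,}c@{\,}}j \\ i \end{array} \right] \{ j+i-2;a \}_i$. Rather than repeat the triangular-array argument, I would deduce the required identity from the positive one by the substitution $a \mapsto a^{-1}$, $q \mapsto q^{-1}$: the $q$-binomial $\left[ \begin{array}{@{\,}c@{\,}}j \\ i \end{array} \right]$ is invariant because $[n]$ is symmetric under $q \mapsto q^{-1}$, while $\{ m;a \} \mapsto -\{ m;a \}$ supplies a factor $(-1)^i$ in $\{ j+i-2;a \}_i$ and $a^i q^{\frac{i(i-1)}{2}} \mapsto a^{-i} q^{-\frac{i(i-1)}{2}}$. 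This yields $\sum_{i=0}^j \bar{t}_i \theta_{j,i} = a^{-2j} q^{-2j(j-1)}$, matching $t^{-1}(D_{j,j}) = a^{-2j} q^{-2j(j-1)} D_{j,j}$.

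The substance of the proposition lies entirely in the preceding identity $a^{2n} q^{2n(n-1)} = \sum_{i=0}^n a^i q^{\frac{i(i-1)}{2}} \left[ \begin{array}{@{\,}c@{\,}}n \\ i \end{array} \right] \{ n+i-2;a \}_i$, so the remaining steps are routine. I expect the only delicate point to be the truncation at $i = j$ via Corollary \ref{cor:vanish}, which is what allows the single element $\omega_n^{\pm}$, assembled from $R_0, \ldots, R_n$, to reproduce the correct twist eigenvalue simultaneously on every $D_{j,j}$ with $j \le n$, each governed by its own $j$-indexed instance of the identity, together with carefully tracking the sign change $\{ m;a \} \mapsto -\{ m;a \}$ in the negative case.
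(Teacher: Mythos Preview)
Your proposal is correct and follows essentially the same route as the paper: reduce to the basis $\{D_{j,j}\}$, use Corollary~\ref{cor:vanish} to truncate $\sum_{i=0}^n t_i\theta_{j,i}$ at $i=j$, invoke the preceding identity with $j$ in place of $n$, and handle $\omega_n^-$ via the mirror substitution $a\mapsto a^{-1}$, $q\mapsto q^{-1}$. The paper states the argument more tersely (phrasing the truncation as ``it is sufficient to show $e_{\omega_i}(D_{i,i})=a^{2i}q^{2i(i-1)}D_{i,i}$'') but the content is the same.
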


\section{A twisting formula}
In this section, we would like to find a twisting formula of $p$ full twists.
As a preparation, we need the following lemma.
\begin{lemma}\label{lemma:ymn} The following holds:
\begin{figure}[H]
\begin{picture}(0,0)(0,0)
\put(6,27){\scriptsize $m$} \put(6,5){\scriptsize $n$}
\put(41,15){$\displaystyle = \sum_{i=0}^{\min\{m,n\}} y_{m,n}^i$} 
\put(128,29){\scriptsize $m-i$}
\put(126,16){\scriptsize $i$}
\put(151,16){\scriptsize $i$} 
\put(130,2){\scriptsize $n-i$} 
\end{picture}
\centering
\includegraphics[width=60mm,pagebox=cropbox,clip]{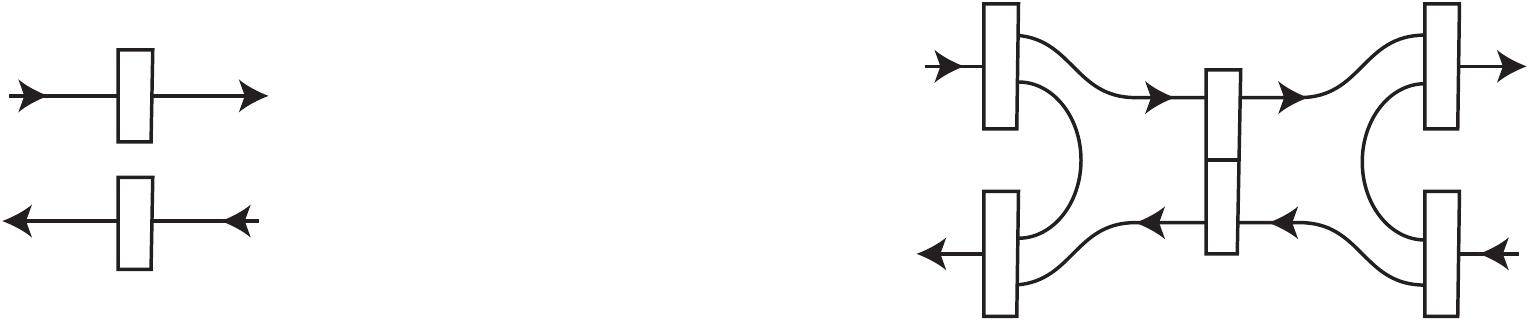}
%\caption{Presentation by the $(m-i,n-i)$ th $q$-symmetrizer}
\label{fig:Hmn}
\end{figure}
\noindent
where $y_{m,n}^i$ is defined by
\begin{align*}
y^i_{m,n} = \frac{  \{m\}_i \{n\}_i  }{ \{ i \}! \{m+n-i-1;a \}_i   }.
\end{align*}
\end{lemma}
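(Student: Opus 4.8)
The plan is to prove the identity by induction on $\min\{m,n\}$, using the symmetry $y_{m,n}^i=y_{n,m}^i$ to assume $n\le m$, and reducing the $(m,n)$ configuration to the $(m,n-1)$ configuration by peeling one strand off the smaller symmetrizer. The base case $n=0$ is immediate: the left-hand side is then just $H_m$, only the $i=0$ term survives on the right, and $y_{m,0}^0=1$. For the inductive step I would apply the recursive definition of the $q$-symmetrizer in Figure \ref{fig:qsym} to the $n$-th symmetrizer, writing it as a $\frac{q^{-n+1}}{[n]}$-multiple of the diagram built from $H_{n-1}$ plus a $\frac{[n-1]}{[n]}$-multiple of a term carrying a single extra crossing. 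Absorbing that crossing into the adjacent symmetrizer through the first two identities of Figure \ref{fig:qsymp}, and closing the resulting turnback via the factor $\frac{\{n-1;a\}}{\{n\}}$ (and its antisymmetrized analogue), turns every piece into a genuine $(m,n-1)$-diagram, to which the inductive hypothesis applies.

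This reduction should express $y_{m,n}^i$ as a two-term combination of $y_{m,n-1}^i$ and $y_{m,n-1}^{i-1}$: the first coming from the peeled strand running straight through, the second from its being captured into the shared $i$-bundle in the middle of the right-hand diagram. The ratios one must then match, $\dfrac{y_{m,n}^i}{y_{m,n-1}^i}$ and $\dfrac{y_{m,n}^i}{y_{m,n-1}^{i-1}}$, each collapse to a short quotient of $\{\cdot\}$ and $\{\cdot;a\}$ symbols once one uses $\{n\}_i/\{n-1\}_i=\{n\}/\{n-i\}$ together with the telescoping of the factor $\{m+n-i-1;a\}_i$. Verifying that the scalars produced by the skein manipulation above agree with these quotients is the technical heart of the argument.

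The main obstacle will be the bookkeeping of those scalars: one has to track the $q$-integer prefactors coming from Figure \ref{fig:qsym}, the absorbing factors from Figure \ref{fig:qsymp}, and the sign and $a$-dependence they introduce, and then confirm that they assemble into exactly the claimed recurrence for $y_{m,n}^i$. Since the fact that only the diagrams on the right-hand side can appear is forced by the same vanishing properties (Figure \ref{fig:vanishing property}) already exploited in Lemma \ref{lemma:alpha}, the overall argument runs in close parallel with that lemma, and the only genuinely new content is the closed-form identity among the $\{\cdot;a\}$-symbols; this is a finite elementary computation, which I would settle by a secondary induction on $i$ within each step.
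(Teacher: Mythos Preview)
Your inductive plan is a legitimate strategy in principle, but it is not what the paper does, and the step you call ``the technical heart'' hides more than scalar bookkeeping.

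The paper's proof is non-inductive and essentially one line. It first observes that an expansion of the left-hand side in the diagrams $(\text{$i$ turnbacks})\cdot D_{m-i,n-i}\cdot(\text{$i$ turnbacks})$ must exist, because the definition of $D_{m,n}$ in Figure~\ref{fig:Dmn0} is a triangular change of basis between these and the $H$-turnback diagrams. To extract $y_j$, it then \emph{composes both sides with the $j$-th diagram}. On the right, the idempotent and vanishing properties of $D_{m-j,n-j}$ annihilate every term with $i\neq j$; on the left, the composite is evaluated directly by Lemma~\ref{lemma:beta}. Comparing the two surviving scalars gives $y_j=y_{m,n}^j$ immediately. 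No recursion on $n$, no peeling, no secondary induction on $i$.

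Your route, by contrast, has a real gap at the point where you re-enter the $D$-basis. After expanding $H_n$ and invoking the hypothesis, what you hold are diagrams built from $D_{m-i,\,n-1-i}$ with one loose strand on the $n$-side still attached; to finish you must rewrite each of these in the basis $\{D_{m-j,n-j}\}_j$. The moves you invoke from Figures~\ref{fig:qsym}--\ref{fig:qsymp} act on $H$- and $E$-boxes, not on the $(m',n')$-th $q$-symmetrizers that the inductive hypothesis has just produced, and the crossing appearing in the recursion for $H_n$ lives entirely among the $n$-strands, so there is no ``adjacent symmetrizer'' of the kind Figure~\ref{fig:qsymp} addresses (nor any reason for the antisymmetrizer factor you mention to appear). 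What you genuinely need is a one-strand branching rule of the shape $D_{m',\,n'-1}\otimes 1 = c_0\,D_{m',n'} + c_1\,(\text{turnback})\,D_{m'-1,\,n'-1}\,(\text{turnback})$ with explicit coefficients, and that is itself a small instance of the lemma you are proving. The circularity can be broken---for example by proving that branching rule directly from Figure~\ref{fig:Dmn0}, or by strengthening the induction to run simultaneously over all $(m',n')$ with $n'<n$---but this is substantive extra work your outline does not contain. The paper's projection argument via Lemma~\ref{lemma:beta} sidesteps all of it.
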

\begin{proof}
The definition of the $(m,n)$-th $q$-symmetrizer implies that
the left-hand side is described by a linear combination of 
the $(m-i,n-i)$-th $q$-symmetrizers %($j=0,\ldots,\min\{m,n\}$)
with some coefficients $y_i$ for $i=0,\ldots, \min\{m,n\}$.
%Hence, we express it by a linear combination of them.
We select the $j$-th term in the sum, 
which includes the $(m-j,n-j)$-th $q$-symmetrizer. 
Then, we connect the $j$-th term to both sides.
From Lemma \ref{lemma:beta} and the idempotent property, 
we obtain the coefficient of the $j$-th term, $y_j$,
which is equal to $y_{m,n}^j$ above.  
\end{proof}

We set the elemnet  $\omega_n^p \in \mathcal{S}$ by
%\begin{definition}
\begin{align*}
\omega_n^p = \sum_{i=0}^n t_{i,p} R_i,
\end{align*}
%\end{definition}
for some $t_{i,p} \in \mathbb{C}$, and we would like to determine 
coefficients $t_{i,p}$
so that $e_{\omega_n^p}(x) = t^p(x)$ for $x \in \mathcal{D}_{n,n}$.

By the idempotent and vanishing properties of $D_{n,n}$,
for some $T_{n,p} \in \mathbb{C}$,
the left-hand side of Figure \ref{fig:twist} is transformed into 
$D_{n,n}$ with the multiplication of $T_{n,p}$.
\begin{figure}[H]
\begin{picture}(0,0)(0,0)
\put(37,81){\scriptsize $n$} \put(51,81){\scriptsize $n$}
\put(25,37){\scriptsize $2p$ crossings}
\put(37,-4){\scriptsize $n$} \put(51,-4){\scriptsize $n$} 
\put(105,37){$\displaystyle =\quad T_{n,p}$}
\put(153,55){\scriptsize $n$} \put(168,55){\scriptsize $n$}
\put(153,21){\scriptsize $n$} \put(168,21){\scriptsize $n$} 
\end{picture}
\centering
\includegraphics[width=60mm,pagebox=cropbox,clip]{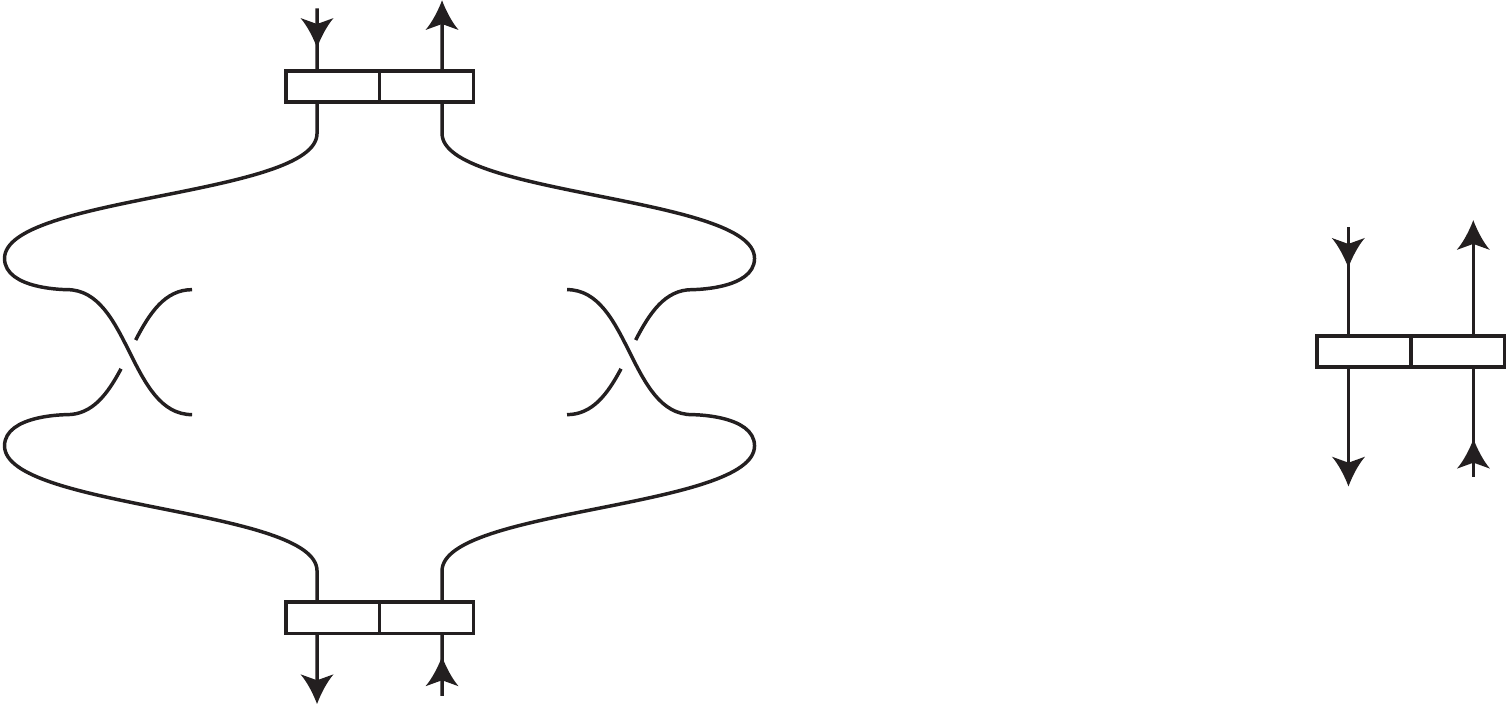}
\caption{}
%\caption{A twisting formula for $p$ full twists}
\label{fig:twist}
\end{figure}
\noindent
To determine $t_{n,p}$, we calculate 
%the left hand side of Figure \ref{fig:twist}
$T_{n,p}$ in two ways.
One is to make use of the $(n-i,n-i)$-th $q$-symmetrizer ($i=0,\ldots,n$),
and the other is to encircle $p$ full twists by $\omega_{n}^{p}$.
These two ways are due to \cite{Masbaum}.

First, using Lemma \ref{lemma:ymn},
we replace $2n$ arcs by  the $(n-i,n-i)$-th $q$-symmetrizer ($i=0,\ldots,n$). 
%between $2n$ arcs. 
Next, we undo the $i$ arcs. 
This procedure is shown in Figure \ref{fig:twist0}.
\begin{figure}[H]
\begin{picture}(0,0)(0,0)
\put(37,168){\scriptsize $n$} \put(51,168){\scriptsize $n$}
\put(25,125){\scriptsize $2p$ crossings}
\put(37,84){\scriptsize $n$} \put(51,84){\scriptsize $n$} 
\put(93,125){$\displaystyle = \sum_{i=0}^n y_{n,n}^i$} 
\put(153,146){\scriptsize $n$}
\put(153,106){\scriptsize $n$}
%\put(193,139){\scriptsize $n-i$} 
\put(203,130){\scriptsize $i$}
\put(203,121){\scriptsize $i$}
\put(231,125){\scriptsize $i$} 
%\put(193,112){\scriptsize $n-i$}
%
\put(10,37){$\displaystyle = \sum_{i=0}^n y_{n,n}^i (a^{-i}q^{-i(i-1)-2i(n-i)})^{2p}$}
%\put(-10,34){$\displaystyle = \sum_{i=0}^n y_i (a^{-(n-i)}q^{-(n-i)(n-i-1)-2(n-i)i})^{2p}$}
\put(180,38){\scriptsize $i$}
\put(200,47){\scriptsize $n-i$}  \put(240,48){\scriptsize $n-i$}
\put(200,27){\scriptsize $n-i$}  \put(240,27){\scriptsize $n-i$}
\put(275,38){\scriptsize $i$} 
\end{picture}
\centering
\includegraphics[width=95mm,pagebox=cropbox,clip]{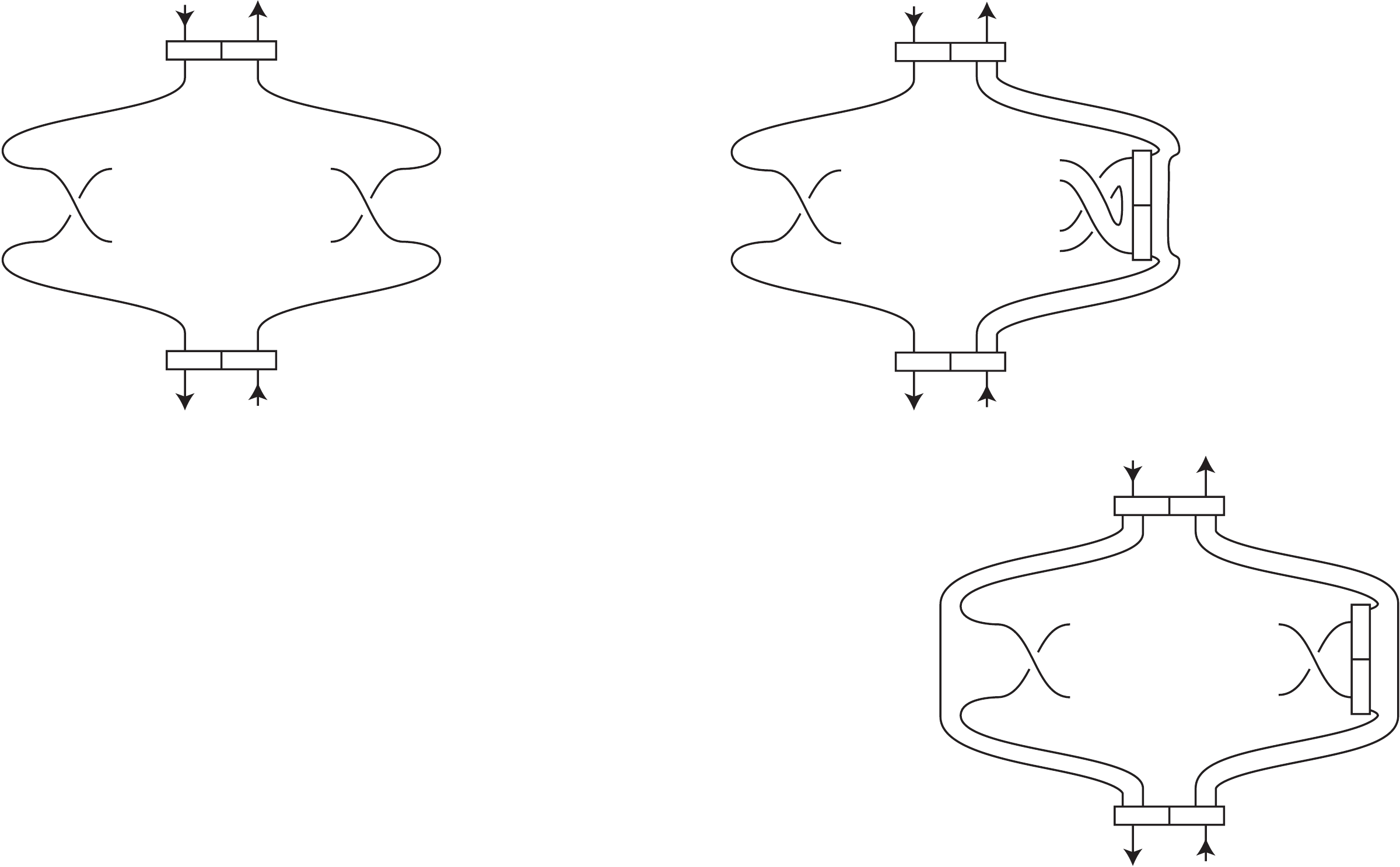}
\caption{}
\label{fig:twist0}
\end{figure}
\noindent
Because of the HOMFLY-PT skein relation and the vanishing property of 
the $(n-i,n-i)$-th $q$-symmetrizer, all crossings are canceled shown in Figure \ref{fig:twist01}.
\begin{figure}[H]
\begin{picture}(0,0)(0,0)
\put(101,37){$=$}
\put(150,37){$\displaystyle = \sum_{j=0}^{n-i} x_{n-i,n-i}^j$} 
\put(234,38){\scriptsize $j$}\put(246,38){\scriptsize $j$}
\end{picture}
\centering
\includegraphics[width=90mm,pagebox=cropbox,clip]{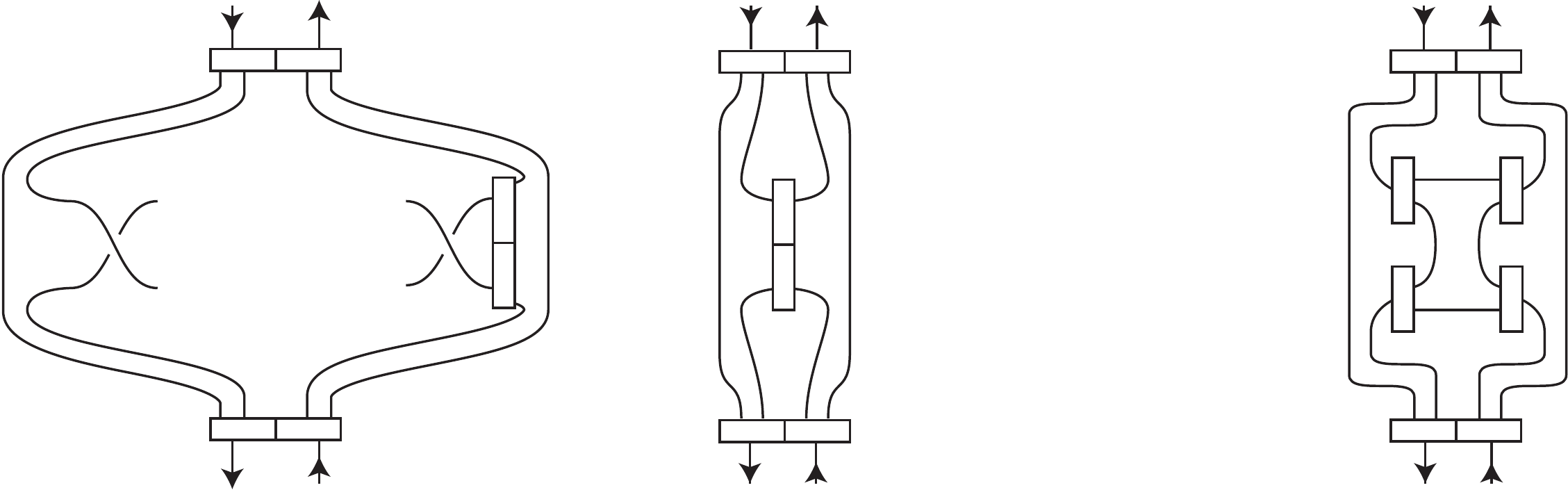}
\caption{}
\label{fig:twist01}
\end{figure}
\noindent
From the vanishing property of the $(n,n)$-th $q$-symmetrizer,
Figure \ref{fig:twist01} does not vanish if $j=n-i$.
Hence we have
\begin{align*}
T_{n,p}
&= \sum_{i=0}^n y_{n,n}^i (a^{-i}q^{-i(i-1)-2i(n-i)})^{2p} x_{n-i,n-i}^{n-i} \\
&= 
 \sum_{i=0}^n  (a^{-i}q^{-i(i-1)-2i(n-i)})^{2p}
\frac{  \{n\}_i \{ n \}_i  }{  \{ i \}! \{2n-i-1;a\}_i  }(-1)^{n-i} \frac{ \{n-i\}!   }{ \{ 2n-2i-2;a\}_{n-i}    } \\
&=
(a^{-n}q^{-n(n-1)})^{2p} \{ n \}!
\sum_{i=0}^n (-1)^i (a^{i}q^{i(i-1)})^{2p}\frac{  \{n\}_{n-i}  }{ \{n-i\}! }
\frac{  \{2i-1;a \}  }{  \{ n+i-1;a \}_{n+1}   } 
\end{align*}

Second, we encircle  $2p$ full twists arcs by  $\omega_{n}^{p}$.
Here, we consider the number of arcs of $\omega_n^p$ at the horizontal line.
From the vanishing property of the $(n,n)$-th $q$-symmetrizer,
The non-vanishing term has $2n$ arcs, which is $R_n$. 
Therefore, we have the transformation shown in Figure \ref{fig:twist1}.
\begin{figure}[H]
\begin{picture}(0,0)(0,0)
\put(25,38){\scriptsize $2p$ crossings}
\put(37,81){\scriptsize $n$} \put(51,81){\scriptsize $n$}
\put(37,-4){\scriptsize $n$} \put(51,-4){\scriptsize $n$}
\put(90,38){$= (a^nq^{n(n-1)})^{-2p}$} 
\put(182,55){$\omega_n^p$}
%}
\put(212,38){$= (a^nq^{n(n-1)})^{-2p}t_{n,p}$}
\put(313,54){ $R_n$} 
%\put(330,37){$R_n$} 
\end{picture}
\centering
\includegraphics[width=117.5mm,pagebox=cropbox,clip]{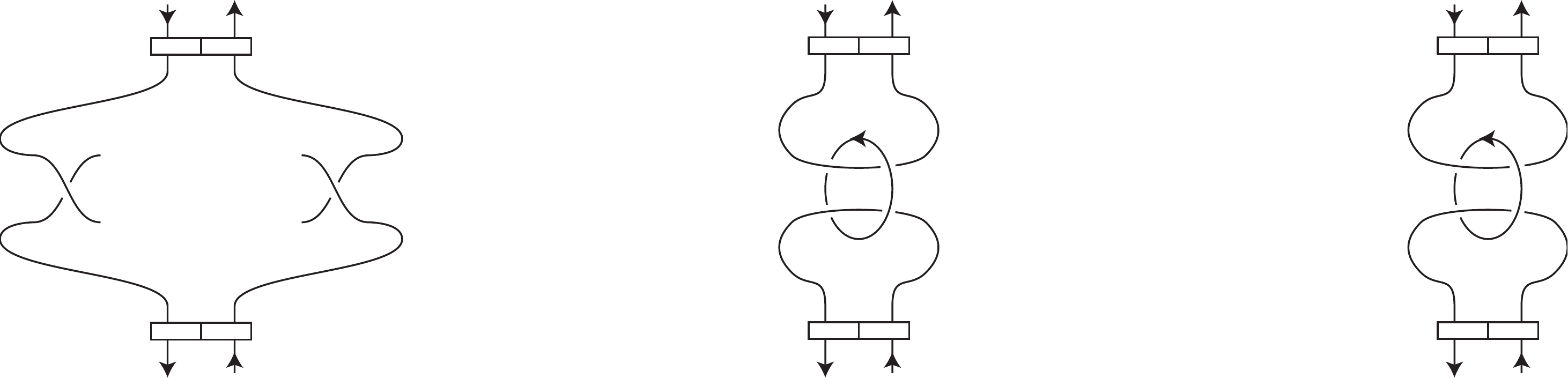}
\caption{}
%\caption{Presentation by the $(m-i,n-i)$ th $q$-symmetrizer}
\label{fig:twist1}
\end{figure}
\noindent
Furthermore, we apply Lemma \ref{lemma:ymn} to two $n$ arcs which go through $R_n$.
%we have the $(n-i,n-i)$-th $q$-symmetrizer.
By Corollary \ref{cor:vanish}, they vanish except the $(n,n)$-th  $q$-symmetrizer.
Therefore, $T_{n,p}$ is given by
\begin{align*}
T_{n,p} &= (a^nq^{n(n-1)})^{-2p}t_{n,p} \theta_{n,n} x_{n,n}^n \\
&=
 (a^nq^{n(n-1)})^{-2p} \{ n \}! \{2n-2;a\}_{n} (-1)^n \frac{  \{n \}!  }{  \{2n-2;a\}_n }
t_{n,p} \\
&=
 (-1)^n  (a^nq^{n(n-1)})^{-2p} (\{n\}!)^2 t_{n,p} 
\end{align*}

Finally, by comparing both $T_{n,p}$'s, we have the following proposition.
\begin{proposition}
$t_{n,p}$ is given by
\begin{align*}
t_{n,p} %& = (-1)^n \sum_{i=0}^n \frac{(-1)^i }{ \{ i \}! \{ n-i \}!} 
%\frac{1}{ \{ n+i-1; a \}_{n-i}} 
%\frac{1}{ \{ 2i-2 ; a \}_{i}} 
%\Bigl( a^i q^{i(i-1)}\Bigr)^{2p} \\
& = (-1)^n \sum_{i=0}^n   (-1)^i   \frac{( a^i q^{i(i-1)})^{2p} }{ \{ i \}! \{ n-i \}!}.
\frac{\{ 2i-1 ; a \}}{ \{ n+i-1; a \}_{n+1} }
\end{align*}
\end{proposition}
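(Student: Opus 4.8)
The plan is to obtain $t_{n,p}$ by equating the two independent evaluations of $T_{n,p}$ that have just been assembled. The first, coming from replacing the $2n$ parallel strands by the $(n-i,n-i)$-th $q$-symmetrizers via Lemma \ref{lemma:ymn} and cancelling all the twist crossings through the vanishing property, reads
\begin{align*}
T_{n,p} = (a^{-n}q^{-n(n-1)})^{2p}\{n\}! \sum_{i=0}^n (-1)^i (a^{i}q^{i(i-1)})^{2p}\frac{\{n\}_{n-i}}{\{n-i\}!}\frac{\{2i-1;a\}}{\{n+i-1;a\}_{n+1}}.
\end{align*}
The second, coming from encircling the $2p$ full twists by $\omega_n^p$ and invoking Corollary \ref{cor:vanish} to retain only the $R_n$ contribution, reads
\begin{align*}
T_{n,p} = (-1)^n (a^{n}q^{n(n-1)})^{-2p}(\{n\}!)^2\, t_{n,p}.
\end{align*}

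Next I would set these two expressions equal. The first observation to make is that the two scalar prefactors coincide: $(a^{-n}q^{-n(n-1)})^{2p} = (a^{n}q^{n(n-1)})^{-2p}$, so they cancel from both sides. Dividing by one factor of $\{n\}!$ and by $(-1)^n$ then leaves
\begin{align*}
t_{n,p} = \frac{(-1)^n}{\{n\}!}\sum_{i=0}^n (-1)^i (a^{i}q^{i(i-1)})^{2p}\frac{\{n\}_{n-i}}{\{n-i\}!}\frac{\{2i-1;a\}}{\{n+i-1;a\}_{n+1}}.
\end{align*}

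The final step is the simplification of the $q$-integer factors inside the sum. I would use that $\{n\}_{n-i} = \{n\}\{n-1\}\cdots\{i+1\}$ while $\{n\}! = \{n\}\{n-1\}\cdots\{1\}$, so that $\{n\}_{n-i}/\{n\}! = 1/\{i\}!$. Absorbing this quotient into the summand converts the two separate denominators into the symmetric pair $\{i\}!\{n-i\}!$ and produces exactly the claimed formula. I do not expect a genuine obstacle at this stage: the substantive work lies in producing the two evaluations of $T_{n,p}$, whereas the comparison itself is a short, purely algebraic identification. The only points demanding care are confirming that the twist-map prefactors are true inverses so that they drop out cleanly, and correctly tracking the overall sign $(-1)^n$.
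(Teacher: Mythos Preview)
Your proposal is correct and follows precisely the paper's approach: the paper derives the two expressions for $T_{n,p}$ exactly as you describe and then simply states that ``by comparing both $T_{n,p}$'s'' the formula follows, which is the algebraic identification you spell out. The cancellation of the twist prefactors and the simplification $\{n\}_{n-i}/\{n\}! = 1/\{i\}!$ are exactly what is needed.
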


For $|p| \geq 2$, we need the $(n,n)$-th $q$-symmetrizer to calculate the twisting formula.
But for $p= \pm 1$,  we can use Lemma \ref{lemma:alpha}. 
Hence,  by combining Proposition \ref{proposition:omega}, we have the following corollary.
\begin{corollary}
For $p=\pm 1$, $t_{n,p}$ is given by
\begin{align*}
t_{n,1} &= \frac{  a^n q^{ \frac{n(n-1)}{  2  } }  }{  \{n \}!   } = t_n\\
t_{n,-1} &=%\bar{t}_i = 
%t_i |_{a \to a^{-1},q \to q^{-1}} = 
(-1)^{n}\frac{  a^{-n} q^{-\frac{n(n-1)}{  2  }}  }{  \{n \}!   } = \bar{t}_n.
\end{align*}
\end{corollary}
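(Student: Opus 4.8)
The plan is to avoid the general twisting formula (which for $|p|\ge 2$ forces one through the $(n,n)$-th $q$-symmetrizer) and instead identify the special element $\omega_n^{\pm 1}$ directly with the elements $\omega_n^{\pm}$ already produced in Proposition \ref{proposition:omega}. The point is that the coefficients $t_{i,p}$ of $\omega_n^p=\sum_{i=0}^n t_{i,p}R_i$ are pinned down by a single triangular linear system, so any element of the span of $R_0,\dots,R_n$ that implements $t^p$ on $\mathcal{H}_{n,n}$ must carry precisely these coefficients.

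First I would set up the characterizing equations. Since $\{D_{i,i}\mid i=0,\dots,n\}$ is a basis of $\mathcal{H}_{n,n}$ and $e_{R_j}(D_{i,i})=\theta_{i,j}D_{i,i}$ with $\theta_{i,j}=\{i\}_j\{i+j-2;a\}_j$, the requirement $e_{\omega_n^p}(D_{i,i})=t^p(D_{i,i})=a^{2ip}q^{2pi(i-1)}D_{i,i}$ becomes
\begin{align*}
\sum_{j=0}^{i}t_{j,p}\,\theta_{i,j}=a^{2ip}q^{2pi(i-1)},\qquad i=0,\dots,n.
\end{align*}
Here I use that $\theta_{i,j}=0$ for $j>i$ (because $\{i\}_j=0$), so the system is lower triangular; its diagonal entries are $\theta_{i,i}=\{i\}!\,\{2i-2;a\}_i$, which are nonzero as rational functions of $a,q$. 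Hence the system has a unique solution, and the $t_{i,p}$ are determined by it.

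Next I would invoke Proposition \ref{proposition:omega}. For $p=1$ it asserts $e_{\omega_n^+}(D_{i,i})=t(D_{i,i})$ for every $i=0,\dots,n$, i.e. the coefficients $t_i=a^iq^{i(i-1)/2}/\{i\}!$ of $\omega_n^+=\sum_i t_iR_i$ solve exactly the $p=1$ instance of the system above. By uniqueness $t_{i,1}=t_i$ for all $i$, and reading off $i=n$ gives $t_{n,1}=t_n=a^nq^{n(n-1)/2}/\{n\}!$. The negative case is identical, using $e_{\omega_n^-}(D_{i,i})=t^{-1}(D_{i,i})$ together with the coefficients $\bar t_i=(-1)^ia^{-i}q^{-i(i-1)/2}/\{i\}!$, which yields $t_{n,-1}=\bar t_n$.

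The role of Lemma \ref{lemma:alpha} is what makes $p=\pm 1$ genuinely easier than the general case: a single full twist between the two oppositely oriented $n$-symmetrizers is resolved by one application of Lemma \ref{lemma:alpha} (with coefficients $\alpha_{n,n}^i$, respectively $\bar\alpha_{n,n}^i$ for the negative twist), so one never has to insert the $(n,n)$-th $q$-symmetrizer nor evaluate the full sum appearing in the Proposition. If one prefers a self-contained computation, I would recompute $T_{n,\pm 1}$ this way and compare it with $T_{n,p}=(-1)^n(a^nq^{n(n-1)})^{-2p}(\{n\}!)^2 t_{n,p}$. I expect the main obstacle to be purely bookkeeping: verifying that the diagonal $\theta_{i,i}$ is nonzero so that uniqueness applies, and — if one takes the direct route rather than quoting Proposition \ref{proposition:omega} — making the $\alpha_{n,n}^i$ resolution of the single twist telescope to the closed forms $t_n$ and $\bar t_n$. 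Quoting the Proposition sidesteps that collapse entirely, which is why I would route the proof through it.
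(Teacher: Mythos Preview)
Your proposal is correct and follows essentially the same route as the paper: both arguments identify $\omega_n^{\pm 1}$ with the elements $\omega_n^{\pm}$ already constructed, invoking Proposition \ref{proposition:omega} to conclude $t_{n,1}=t_n$ and $t_{n,-1}=\bar t_n$. You are somewhat more explicit than the paper in spelling out the uniqueness step (via the triangular system with diagonal $\theta_{i,i}=\{i\}!\,\{2i-2;a\}_i$), which the paper leaves implicit in the sentence ``by combining Proposition \ref{proposition:omega}''; your remark on the role of Lemma \ref{lemma:alpha} as the shortcut available only at $p=\pm1$ also matches the paper's brief comment.
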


\section{Calculations of the colored HOMFLY-PT polynomial}
Now, we are ready to calculate the colored HOMFLY-PT polynomial of twist knots.
The $n$-th colored HOMFLY-PT polynomial for a knot $K$ is given by
\begin{align*}
\mathscr{H}_n(K) = \frac{\langle K(H_n) \rangle   }{ \langle \text{Unknot}(H_n) \rangle } 
=
\frac{ \{n\}!  }  {\{ n-1;a\}_n} {\langle K(H_n) \rangle},
\end{align*}
%where $K(H_n)$  stands for $n$ copies of $K$ inserted by $H_n$ with compatible orientations.
where $K(H_n)$  stands for $K$ cabled by  $H_n$  with compatible orientations.
%The $n$-th colored HOMFLY-PT polynomial
$\mathscr{H}_n(K) $ is normalized so that it is to be $1$ for  the unknot. 
We assume that the framing of the knot $K$ is to be $0$.  
%For $n=1$, $\mathscr{H}_1(K)$ is the HOMFLY-PT polynomial of $K$ with the unknot to be $1$.

For an integer $p$, the twist knot $K_p$ is described in Figure \ref{fig:twistknotKp}.
\begin{figure}[H]
\begin{picture}(0,0)(0,0)
%\put(5,19){\scriptsize $p$ full twists}
\put(23,17){\scriptsize $\vdots$}
\put(33,23){$\Biggr\}$}
\put(40,24){\scriptsize $p$ full twists}
\end{picture}
\centering
\includegraphics[width=15mm,pagebox=cropbox,clip]{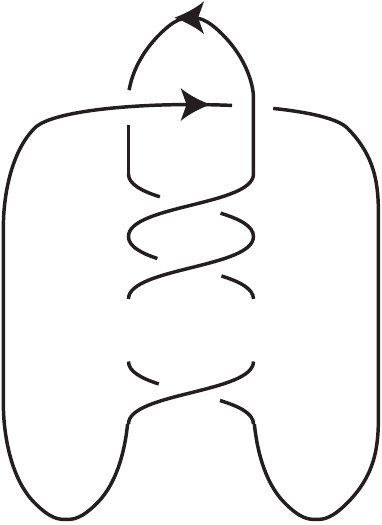}
\caption{The twist knot $K_p$}
\label{fig:twistknotKp}
\end{figure}
\noindent
Especially, $K_1$ is a left-handed trefoil, and $K_{-1}$ is a figure-eight knot.

Let us calculate $\langle K_p(H_n) \rangle$. 
Some transformations are also due to  \cite{Masbaum}.
Since $H_n$ has the following presentation: 
\begin{align*}
H_n  =  \sum_{i=0}^{n} \frac{ \{ n+i-1;a \}_{n-i} }{ \{ n-i\}! } R_i,
\end{align*}
we insert this presentation along $K_p$ instead of $H_n$,
and we unknot the $p$ full twists with the encircling map by $\omega_{n}^{p}$.
%Here, by the same reason as \cite{Masbaum}, 
Here, we remark a pair $R_i \in K_{p}(H_n)$ and $R_j \in \omega_{n}^{p}$.
The pair is a cabling of the Whitehead link, of which
one component pierces the other twice in the opposite direction each other.
Moreover, $R_i$ and $R_j$ are a linear combination of $D_{k,k}$ 
with $k \leq i$ and $k \leq j$, respectively.
%$R_i \in \omega_{n,p}$ and $R_j \in K_{p}(H_n)$ 
%$\omega_{n,p}$ and By
Therefore, by Corollary \ref{cor:vanish}, 
the pair vanishes if $i \neq j$. 
Hence, we have  $\langle K_p(H_n) \rangle$ as follows:
 \begin{figure}[H]
\begin{picture}(0,0)(0,0)
\put(0,25){$\displaystyle \langle K_p(H_n) \rangle = \sum_{i=0}^{n} \frac{ \{ n+i-1;a \}_{n-i} }{ \{ n-i\}! } t_{i,p}$}
\put(182,6){$R_i$}
\put(208,43){{$R_i$}}
\end{picture}
\centering
\hspace*{56mm}
\includegraphics[width=20mm,pagebox=cropbox,clip]{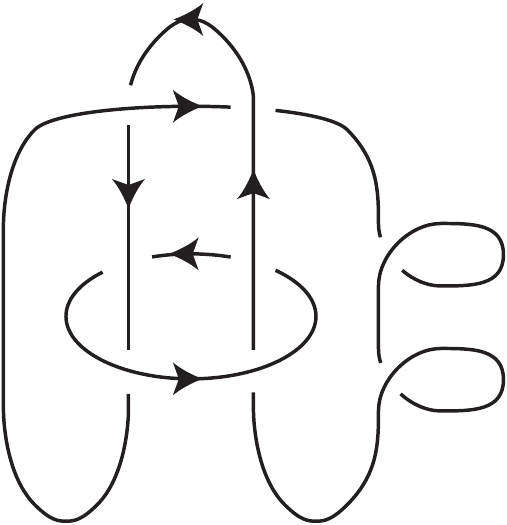}
%\caption{}
\label{fig:twistknot0}
\end{figure}

Next, we calculate the part of Figure above. 
The first equality comes from the fact that
$R_i -H_i$ is a linear combination of $R_j$ with $j < i$.
The second equality comes from Lemma \ref{lemma:ymn} and Corollary \ref{cor:vanish}.
\begin{figure}[H]
\begin{picture}(0,0)(0,0)
\put(20,76){$R_i$}
\put(46,113){{$R_i$}}
\put(69,91){$=$}
\put(105,76){$R_i$}
\put(131,113){{$H_i$}}
\put(160,91){$=(a^{i}q^{i(i-1)})^2$}
\put(247,76){$R_i$}
\put(264,119){\scriptsize $i$}
\put(62,6){$R_i$}
\put(-40,21){$=(a^{i}q^{i(i-1)})^2 \alpha_{i,i}^i$}
\put(52,50){\scriptsize $i$}
\put(80,50){\scriptsize $i$}
\end{picture}
\centering
\includegraphics[width=95mm,pagebox=cropbox,clip]{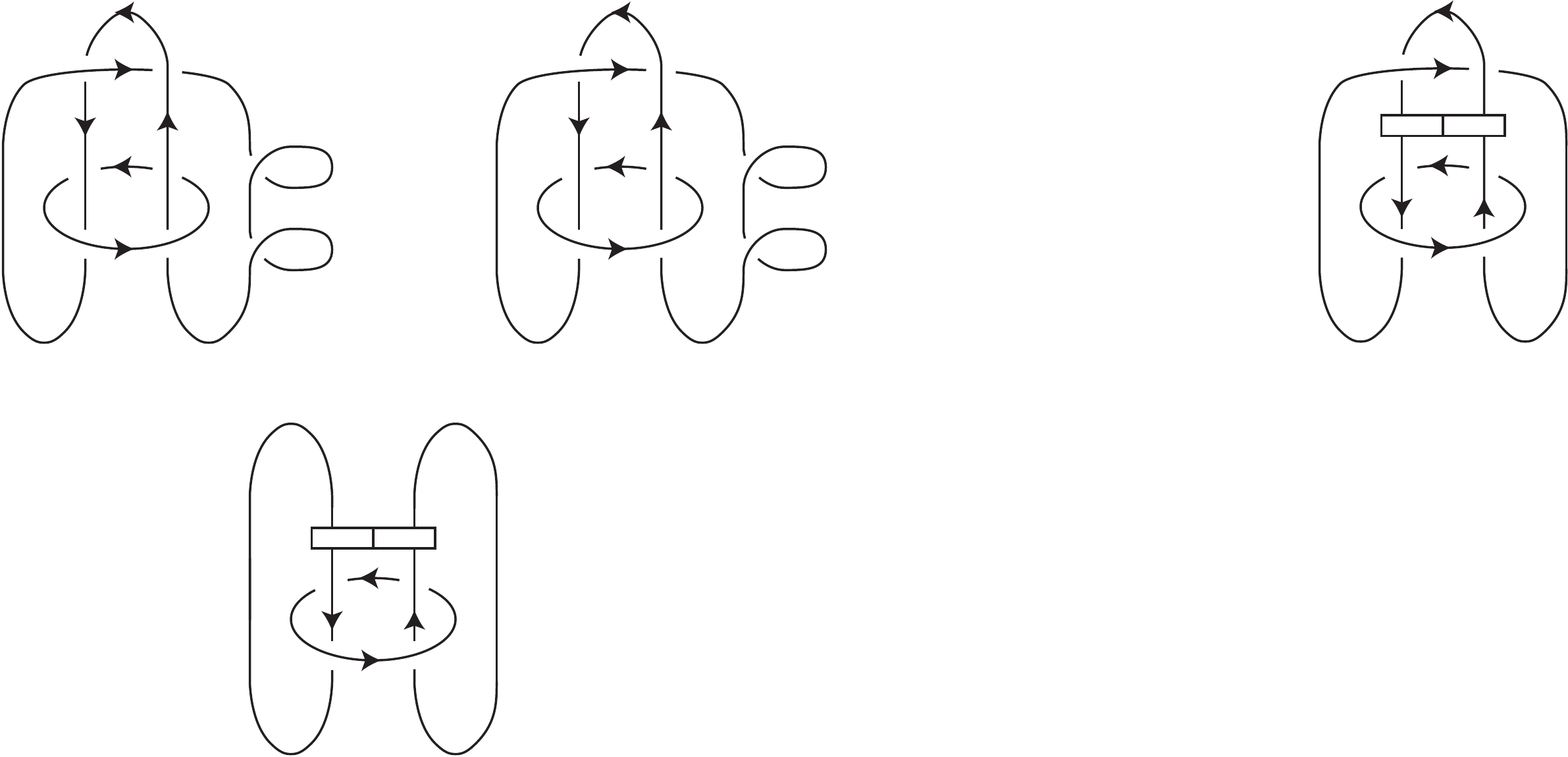}
\begin{align*}
&= (a^{i}q^{i(i-1)})^2 \; \alpha_{i,i}^i \; \theta_{i,i} \; \langle D_{i,i} \rangle \\
&= (a^{i}q^{i(i-1)})^2 (-a)^{-i} q^{-\frac{3}{2}i(i-1)} \{ i \}! \{ i \}! \{ 2i-2 ; a \}_i 
\frac{ \{ 2i -1;a \} ( \{ i-2;a \}_{i-1} )^2  \{ -1;a \}  }{ (\{ i \}!  )^2  } \\
&= (-a)^{i} q^{\frac{  i(i-1)  }{  2  }   }  \{ 2i-1;a \}_{2i}  \{ i-2;a \}_{i}   
\end{align*}
%\caption{}
\label{fig:twistknot1}
\end{figure}

%\noindent
Finally, the $i$-th term in the sum of  $\langle K_p(H_n) \rangle$ 
normalized by $\langle \text{Unknot($H_n$)} \rangle$ is expressed by
\begin{align*}
&\frac{  \{ n \}!  }{ \{n-1;a\}_n   }   \times \frac{ \{n+i-1;a\}_{n-i} }{  \{n-i\}!  }  t_{i,p}
%\frac{1}{\{i\}!} 
\times  (-a)^{i} q^{\frac{  i(i-1)  }{  2  }   }   \{ 2i-1;a \}_{2i}  \{ i-2;a \}_{i} \\
%=&
%\left[ \begin{array}{@{\,}c@{\,}}n \\ i \end{array} \right]
%\frac{  \{n+i-1;a\}_{n+i}  \{ i-2;a \}_{i}   }{  \{n-1;a\}_{n}  } \\
=&
 (-a)^{i} q^{\frac{  i(i-1)  }{  2  }   } \{i\}! t_{i,p}\left[ \begin{array}{@{\,}c@{\,}}n \\ i \end{array} \right]
\{ n+i-1;a \}_i \{i-2;a \}_i.
\end{align*}
Summarizing the above, we obtain the following theorem.
\begin{theorem}
For a twist knot $K_p$, the $n$-th colored HOMFLY-PT polynomial of $K_p$ is given by
\begin{align*}
\mathscr{H}_n(K_p) &=
\sum_{i=0}^n   (-a)^{i} q^{\frac{  i(i-1)  }{  2  }   } \{i\}! t_{i,p}
\left[ \begin{array}{@{\,}c@{\,}}n \\ i \end{array} \right] \{ n+i-1;a \}_i \{i-2;a \}_i \\
&=\sum_{i=0}^n   a^{i} q^{\frac{  i(i-1)  }{  2  }   } \{i\}! s_{i,p}
\left[ \begin{array}{@{\,}c@{\,}}n \\ i \end{array} \right] \{ n+i-1;a \}_i \{i-2;a \}_i, 
\end{align*}
where 
\begin{align*}
s_{i,p} %& = (-1)^n \sum_{i=0}^n \frac{(-1)^i }{ \{ i \}! \{ n-i \}!} 
%\frac{1}{ \{ n+i-1; a \}_{n-i}} 
%\frac{1}{ \{ 2i-2 ; a \}_{i}} 
%\Bigl( a^i q^{i(i-1)}\Bigr)^{2p} \\
& = \sum_{k=0}^i   (-1)^k   \frac{( a^k q^{k(k-1)})^{2p} }{ \{ k \}! \{ i-k \}!}.
\frac{\{ 2k-1 ; a \}}{ \{ i+k-1; a \}_{i+1} }.
\end{align*}
%\begin{align*}
%t_{n,p} %& = (-1)^n \sum_{i=0}^n \frac{(-1)^i }{ \{ i \}! \{ n-i \}!} 
%%\frac{1}{ \{ n+i-1; a \}_{n-i}} 
%%\frac{1}{ \{ 2i-2 ; a \}_{i}} 
%%\Bigl( a^i q^{i(i-1)}\Bigr)^{2p} \\
%& = (-1)^n \sum_{i=0}^n   (-1)^i   \frac{( a^i q^{i(i-1)})^{2p} }{ \{ i \}! \{ n-i \}!}.
%\frac{\{ 2i-1 ; a \}}{ \{ n+i-1; a \}_{n+1} }
%\end{align*}
\end{theorem}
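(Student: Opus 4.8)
The plan is to read $\mathscr{H}_n(K_p)$ off the definition
\[
\mathscr{H}_n(K_p)=\frac{\{n\}!}{\{n-1;a\}_n}\,\langle K_p(H_n)\rangle,
\]
and to evaluate $\langle K_p(H_n)\rangle$ by the strategy of Section $4$. First I would expand the cabling datum in the $R$-basis via $H_n=\sum_{i=0}^n\frac{\{n+i-1;a\}_{n-i}}{\{n-i\}!}R_i$, decorate $K_p$ accordingly, and unknot the $p$ full twists by encircling them with $\omega_n^p=\sum_j t_{j,p}R_j$, using Proposition \ref{proposition:omega} together with the formula for $t_{n,p}$ obtained in Section $4$. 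The resulting diagram is a cabled Whitehead link in which the $R_i$ sitting on $K_p$ is clasped with the $R_j$ coming from $\omega_n^p$, the two components piercing each other twice with opposite orientations. The decisive simplification is that this clasped pair lands in a $\mathcal{D}_{k,k}$-type skein, so Corollary \ref{cor:vanish} forces the pair $(R_i,R_j)$ to vanish unless $i=j$; this collapses the naive double sum to the single sum $\langle K_p(H_n)\rangle=\sum_{i=0}^n\frac{\{n+i-1;a\}_{n-i}}{\{n-i\}!}\,t_{i,p}\,\Lambda_i$, where $\Lambda_i$ is the local value of the matched clasp decorated by $R_i$ on both strands.

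Next I would compute $\Lambda_i$ in closed form. Since $R_i-H_i$ lies in the span of the $R_j$ with $j<i$ and hence pairs trivially against $R_i$, I may replace one $R_i$ by $H_i$, picking up the framing factor $(a^iq^{i(i-1)})^2$. Resolving the clasp by Lemma \ref{lemma:alpha} produces the coefficient $\alpha_{i,i}^i$ times $D_{i,i}$; the remaining encircling by $R_i$ contributes the eigenvalue $\theta_{i,i}=\{i\}!\,\{2i-2;a\}_i$; and the closed diagram is evaluated by $\langle D_{i,i}\rangle$ from Lemma \ref{lemma:D_{m,n}}. Substituting $\alpha_{i,i}^i=(-a)^{-i}q^{-\frac{3}{2}i(i-1)}\{i\}!$ and collecting the $a$- and $q$-powers while merging the shifted products should yield $\Lambda_i=(-a)^iq^{\frac{i(i-1)}{2}}\{2i-1;a\}_{2i}\{i-2;a\}_i$.

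It then remains to assemble the normalized sum. Multiplying $\Lambda_i$ by $\frac{\{n\}!}{\{n-1;a\}_n}\cdot\frac{\{n+i-1;a\}_{n-i}}{\{n-i\}!}$ and regrouping the $q$-factorials into the $q$-binomial coefficient $\left[\begin{array}{@{\,}c@{\,}}n\\i\end{array}\right]$, while telescoping the $\{\,\cdot\,;a\}$-products, gives the first displayed formula. The second form follows from the elementary identity $t_{i,p}=(-1)^is_{i,p}$, equivalently $(-a)^it_{i,p}=a^is_{i,p}$, read off by comparing the stated expressions for $t_{i,p}$ and $s_{i,p}$.

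The main obstacle is the single-sum reduction: one must argue carefully that the Whitehead cabling really places the clasped pair in the diagonal submodule where Corollary \ref{cor:vanish} applies, so that only equal-index terms survive, and that the substitution $R_i\mapsto H_i$ in the local calculation is legitimate for the same reason. Once this geometric vanishing is secured, the remainder is bookkeeping; the delicate point there is keeping the $a$-exponents, $q$-exponents, and the index ranges of the shifted products $\{\,\cdot\,;a\}_\bullet$ aligned through the simplification of $\Lambda_i$ and of the normalized term, which is where arithmetic slips are most likely.
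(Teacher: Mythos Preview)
Your proposal is correct and follows the same route as the paper: expand $H_n$ in the $R$-basis, replace the $p$ full twists by encircling with $\omega_n^p$, use Corollary~\ref{cor:vanish} on the resulting Whitehead-link pairing to collapse the double sum to a single sum over $i$, and then evaluate the surviving diagonal term as $(a^iq^{i(i-1)})^2\,\alpha_{i,i}^i\,\theta_{i,i}\,\langle D_{i,i}\rangle$ before normalizing. The only cosmetic discrepancy is that the paper extracts the factor $(a^iq^{i(i-1)})^2$ via Lemma~\ref{lemma:ymn} together with Corollary~\ref{cor:vanish} (reducing the $H_i$-cabled strand pair passing through $R_i$ to $D_{i,i}$, then untwisting) rather than describing it as a framing correction attached to the substitution $R_i\mapsto H_i$; the ingredients and the final bookkeeping are otherwise identical.
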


\begin{corollary} For the left-handed trefoil $3_1$ and the figure-eight knot $4_1$,
 the $n$-th colored HOMFLY-PT polynomial is given by
\begin{align*}
\mathscr{H}_n(3_1) &= \mathscr{H}_n(K_1) = \sum_{i=0}^n 
(-1)^i a^{2i} q^{i(i-1)}
\left[ \begin{array}{@{\,}c@{\,}}n \\ i \end{array} \right]
\{ n+i-1;a \}_i \{i-2;a \}_i, \\
\mathscr{H}_n(4_1) &= \mathscr{H}_n(K_{-1}) = \sum_{i=0}^n 
\left[ \begin{array}{@{\,}c@{\,}}n \\ i \end{array} \right]
\{ n+i-1;a \}_i \{i-2;a \}_i.
\end{align*}
\end{corollary}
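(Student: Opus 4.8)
The plan is to obtain the Corollary as a direct specialization of the Theorem at the two twist parameters $p=1$ (giving the left-handed trefoil $3_1=K_1$) and $p=-1$ (giving the figure-eight knot $4_1=K_{-1}$), exploiting the fact that the inner sum collapses for exactly these two values. I would begin from the \emph{first} form of the Theorem,
\[
\mathscr{H}_n(K_p) = \sum_{i=0}^n (-a)^i q^{\frac{i(i-1)}{2}} \{i\}!\, t_{i,p} \binom{n}{i} \{n+i-1;a\}_i \{i-2;a\}_i,
\]
rather than the $s_{i,p}$ form, since this expression concentrates the entire dependence on $p$ into the single coefficient $t_{i,p}$. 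The inner $k$-sum defining $t_{i,p}$ is precisely the part that does not simplify for generic $p$; the key point is that for $p=\pm1$ it has already been evaluated in closed form in the earlier Corollary, namely
\[
t_{i,1} = \frac{a^i q^{\frac{i(i-1)}{2}}}{\{i\}!}, \qquad t_{i,-1} = (-1)^i \frac{a^{-i} q^{-\frac{i(i-1)}{2}}}{\{i\}!}.
\]

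Next I would substitute each of these closed forms into the displayed sum and track the elementary cancellations. For $p=1$, the factorial $\{i\}!$ cancels against the denominator of $t_{i,1}$, the two factors $a^i$ combine to $a^{2i}$, the two factors $q^{\frac{i(i-1)}{2}}$ combine to $q^{i(i-1)}$, and the overall sign is $(-1)^i$ coming from $(-a)^i$; this reproduces the claimed single-sum formula for $\mathscr{H}_n(3_1)$. For $p=-1$, the sign $(-1)^i$ carried by $t_{i,-1}$ cancels the $(-1)^i$ from $(-a)^i$, the powers of $a$ and of $q$ cancel identically, and the factorials again cancel, leaving the coefficient $1$ and hence the clean expression for $\mathscr{H}_n(4_1)$. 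Thus each case reduces to substitution followed by cancellation of monomials in $a$, $q$, and $\{i\}!$.

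Since every manipulation is routine once the earlier Corollary is in hand, I do not expect a genuine obstacle here: the only substantive computation — evaluating the inner sum at $p=\pm1$, which rests on Lemma~\ref{lemma:alpha} via the $\alpha_{n,n}^n$ term — has already been carried out in that Corollary. The single point requiring care is bookkeeping of signs and exponents during the cancellation. If one instead preferred to argue from the \emph{second} form of the Theorem, I would first record the identity $s_{i,p}=(-1)^i t_{i,p}$ (obtained by matching the two displayed forms of $\mathscr{H}_n(K_p)$), which reduces that route to the same specialization; I would flag this sign conversion as the only place where an error could creep in.
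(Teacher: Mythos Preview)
Your proposal is correct and is exactly the intended derivation: the paper gives no separate proof of this Corollary, leaving it as the immediate specialization of the Theorem at $p=\pm1$ using the closed forms $t_{i,1}=t_i$ and $t_{i,-1}=\bar t_i$ from the earlier Corollary in Section~4. Your cancellation of signs, powers of $a$, $q$, and the $\{i\}!$ factors is accurate in both cases.
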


%\begin{remark}
%S. Nawata points out  that our formulae are
%useful to compute the colored HOMFLY polynomial 
%for the Borromean rings with three independent colors, where a color means 
%an integer along each component.
%\end{remark}
%

\begin{acknowledgement}
The author would like to thank Thang T. Q. Le  and Satoshi Nawata for useful comments.
This work stared from their comments.
%He would like to thank Professor for 
%necessity of a rigorous proof of 
%comments about the colored HOMFLY-PT polynomials of twist knots.
\end{acknowledgement}

%%%%%%%%%%%%%%%%%%%%%%%%%%%%%%%%%%%%%%%%%%%%%%%%%%%%%%%%%%%%%%%%%%%%%%%%%%%%%%%
%%%                              REFERENCE                                  %%%
%%%%%%%%%%%%%%%%%%%%%%%%%%%%%%%%%%%%%%%%%%%%%%%%%%%%%%%%%%%%%%%%%%%%%%%%%%%%%%%

\end{document}